\newcommand{\lyxdot}{.}
\numberwithin{equation}{section}
\numberwithin{figure}{section}
\theoremstyle{plain}
\newtheorem{thm}{\protect\theoremname}[section]
\theoremstyle{plain}
\newtheorem{conjecture}[thm]{\protect\conjecturename}
\theoremstyle{plain}
\newtheorem{question}[thm]{\protect\questionname}
\theoremstyle{plain}
\newtheorem{lem}[thm]{\protect\lemmaname}
\theoremstyle{definition}
\newtheorem{example}[thm]{\protect\examplename}
\theoremstyle{definition}
\newtheorem{defn}[thm]{\protect\definitionname}
\theoremstyle{plain}
\newtheorem{cor}[thm]{\protect\corollaryname}
\theoremstyle{remark}
\newtheorem*{acknowledgement*}{\protect\acknowledgementname}
\providecommand{\acknowledgementname}{Acknowledgement}
\providecommand{\conjecturename}{Conjecture}
\providecommand{\corollaryname}{Corollary}
\providecommand{\definitionname}{Definition}
\providecommand{\examplename}{Example}
\providecommand{\lemmaname}{Lemma}
\providecommand{\questionname}{Question}
\providecommand{\theoremname}{Theorem}
\begin{document}
\global\long\def\packf#1{\mathfrak{#1}}%

\global\long\def\vertexset{V}%

\global\long\def\chordlen{\textup{chord}}%

\global\long\def\edgeset{E}%

\global\long\def\circumradius{\boldsymbol{r}}%

\global\long\def\labelof{\lambda}%

\global\long\def\Rd{\mathbb{R}^{d}}%

\global\long\def\R{\mathbb{R}}%

\global\long\def\N{\mathbb{N}}%

\global\long\def\codes{\mathfrak{C}}%

\global\long\def\angles{\mathfrak{A}}%

\global\long\def\essential{\mathcal{E}}%

\global\long\def\parenth#1{\left(#1\right)}%

\global\long\def\radii{\textup{radii}}%

\global\long\def\centers{\textup{centers}}%

\global\long\def\codecenterlabel{\textup{center}\lambda}%

\global\long\def\codeneighborlabels{\lambda}%

\global\long\def\neighbors{\textup{neighbors}}%

\global\long\def\code{\textup{code}}%

\global\long\def\pcodes{\textup{codes}}%

\global\long\def\emb{\textup{emb}}%

\global\long\def\cal#1{\mathcal{#1}}%

\global\long\def\set#1#2{\left\{  \vphantom{{#1#2}}#1\right|\left.\vphantom{{#1#2}}#2\right\}  }%

\global\long\def\underleq#1{\underset{#1}{\blacktriangleleft}}%

\global\long\def\underleqtop#1{\underset{#1}{\vartriangleleft}}%

\newcommand{\eder}{Eder Kikianty}
\newcommand{\miek}{Miek Messerschmidt}

\newcommand{\ederemail}{eder.kikianty@up.ac.za}
\newcommand{\miekemail}{miek.messerschmidt@up.ac.za}

\newcommand{\upaddress}{
Department of Mathematics and Applied Mathematics, 
University of Pretoria, 
Private bag X20 Hatfield,
0028 Pretoria, 
South Africa 
}

\def\titletext{On compact packings of euclidean space\\ with spheres of finitely many sizes}
\def\titletextshort{On compact packings of euclidean space with spheres}

\newcommand{\abstracttext}{
For $d\in\mathbb{N}$, a compact sphere packing of Euclidean space
$\mathbb{R}^{d}$ is a set of spheres in $\mathbb{R}^{d}$ with disjoint
interiors so that the contact hypergraph of the packing is the vertex
scheme of a homogeneous simplicial $d$-complex that covers all of
$\mathbb{R}^{d}$.

We are motivated by the question: For $d,n\in\mathbb{N}$ with $d,n\geq2$,
how many configurations of numbers $0<r_{0}<r_{1}<\ldots<r_{n-1}=1$
can occur as the radii of spheres in a compact sphere packing of $\mathbb{R}^{d}$
wherein there occur exactly $n$ sizes of sphere?

We introduce what we call `heteroperturbative sets' of labeled triangulations
of unit spheres and we discuss the existence of non-trivial examples
of heteroperturbative sets. For a fixed heteroperturbative set, we
discuss how a compact sphere packing may be associated to the heteroperturbative
set or not.

We proceed to show, for $d,n\in\mathbb{N}$ with $d,n\geq2$ and for
a fixed heteroperturbative set, that the collection of all configurations
of $n$ distinct positive numbers that can occur as the radii of spheres
in a compact packing is finite, when taken over all compact sphere
packings of $\mathbb{R}^{d}$ which have exactly $n$ sizes of sphere
and which are associated to the fixed heteroperturbative set.}

\newcommand{\subjectcodesprimary}{%
    52C17, 
    05B40
}

\newcommand{\subjectcodessecondary}{%
    57Q05, 
    52C26
}

\newcommand{\keywordstext}{sphere packing, compact sphere packing}

\title[On compact packings of euclidean space with spheres]{On compact packings of euclidean space\\ with spheres of finitely many sizes}

\author{\eder}
\address{\eder,\ \upaddress}
\email{\ederemail}

\author{\miek}
\address{\miek,\ \upaddress}
\email{\miekemail}

\begin{abstract}
\abstracttext
\end{abstract}

\keywords{\keywordstext}

\subjclass[2010]{Primary: \subjectcodesprimary. Secondary: \subjectcodessecondary}

\maketitle

\section{Introduction}

Sphere packing in general has many connections to other branches of
mathematics, see for example \cite{ConwaySloane}. Consider the disc
packing in Figure~\ref{fig:pack54}. Such a packing is called `compact\emph{'
}(we give a precise definition later)\emph{.} Apart from being objects
of purely mathematical interest, some compact packings have been observed
to arise in self-assembled nano-structures: Compare the nine compact
disc packings packings from \cite{Kennedy2006} with electron micrographs
from the non-mathematical literature \cite{ChemistryFernique}, \cite{ChemistryPaik}
and \cite{ChemistryCherniukh}. Compact sphere and disc packings are
therefore of special interest, not just within mathematics, but also
in other scientific disciplines.

\medskip

In this paper we are motivated by the following question: For $d,n\in\mathbb{N}$
with $d,n\geq2$, how many configurations of numbers $0<r_{0}<r_{1}<\ldots<r_{n-1}=1$
can occur as the radii of spheres in a compact sphere packing of $\mathbb{R}^{d}$
wherein there occur exactly $n$ sizes of sphere?

\medskip

\begin{figure}
    \centering \includegraphics[width=100mm]{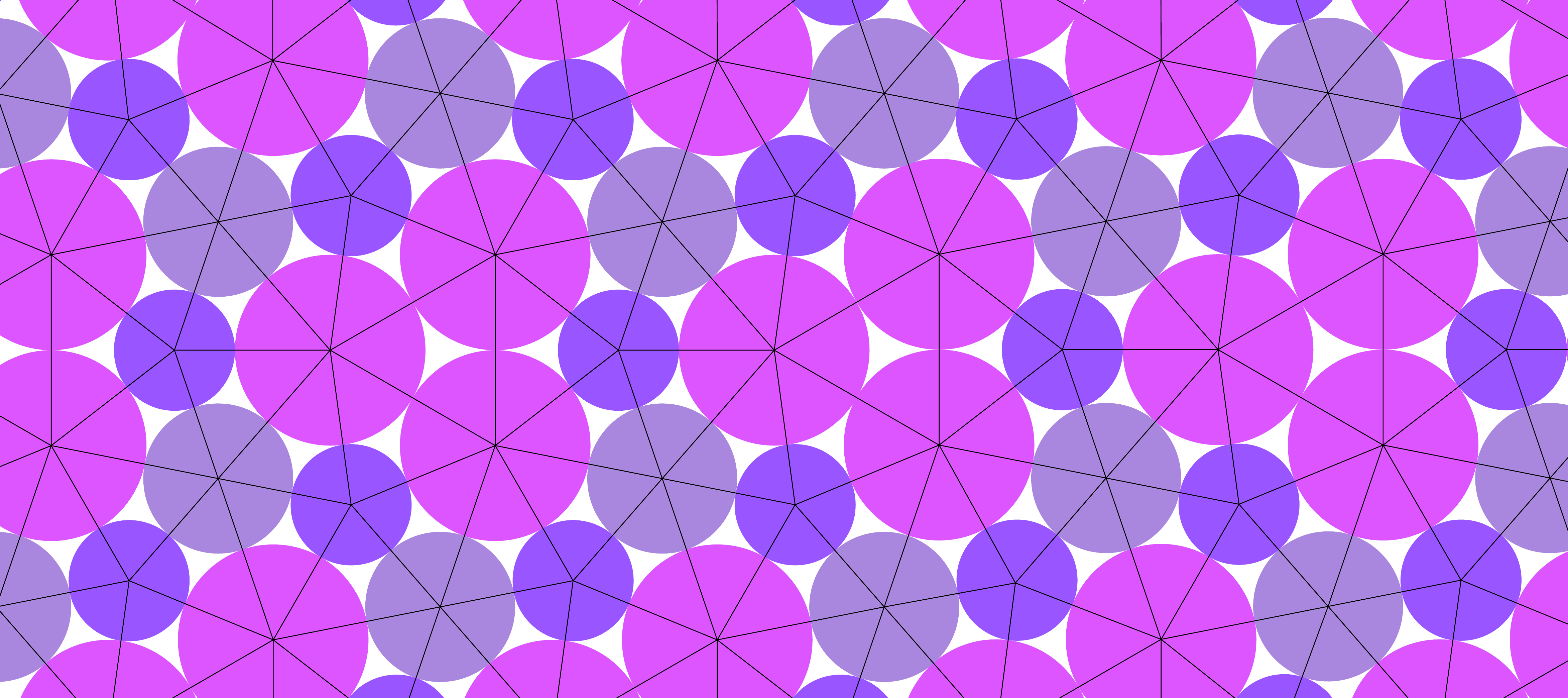}\caption{\label{fig:pack54}One of the 164 compact packings of $\protect\R^{2}$
        by discs with three sizes from \cite[Packing 54]{Fernique2021}, here
        with its packing complex overlaid. }
\end{figure}

Let $d\in\N$. By a \emph{sphere packing} $\packf p$ of Euclidean
space $\mathbb{R}^{d}$, we mean a family of spheres with pairwise
disjoint interiors in $\Rd$. By $\radii(\packf{p\packf )}$ we denote
the set of all positive real numbers that occur as radii of spheres
from $\packf p$.

The \emph{contact hypergraph }of a sphere packing $\packf p$ of $\mathbb{R}^{d}$
is defined as follows: The vertex set is taken as the set of all the
centers of spheres in the packing $\packf p$. As hyperedges, we take
exactly the subsets $E$ of the vertex set that have cardinality at
most $d+1$ and that satisfy both of the following two conditions:
Firstly, for every pair of distinct vertices $a$ and $b$ from the
set $E$, the corresponding spheres in $\packf p$ that respectively
have $a$ and $b$ as centers are mutually tangent. Secondly, if any
vertex of the hypergraph is an element of the convex hull of the set
$E$, then this vertex is an element of $E$.

We say a sphere packing $\packf p$ in Euclidean space $\mathbb{R}^{d}$
is \emph{compact }if the contact hypergraph is exactly the vertex
scheme of a homogeneous\footnote{Here `homogeneous' means that every simplex of the complex is contained
    in a $d$-simplex that is also in the complex.} simplicial $d$-complex in $\Rd$ whose underlying space is homeomorphic
to $\Rd$ through the identity map. We call this homogeneous simplicial
$d$-complex the \emph{packing complex }of the compact sphere packing
$\packf p$. Intuitively speaking, $\Rd$ is tesselated by the $d$-simplices
in the packing complex (cf. Figure~\ref{fig:pack54}).

In this paper we restrict our view to compact sphere packings $\packf p$
satisfying $|\radii(\packf p)|<\infty$ and $\max\radii(\packf p)=1$.
With $d,n\in\N$ we define
\[
    \Pi_{d,n}:=\set{\radii(\packf p)}{\begin{array}{c}
            \packf p~\text{a compact sphere packing of }\Rd \\
            |\radii(\packf p)|=n,\ \max\radii(\packf p)=1
        \end{array}}.
\]
Our work in this paper is motivated by the following conjecture.
\begin{conjecture}
    \label{conj:main-conjecture}For all $d,n\in\N$ with $d,n\geq2$,
    the set $\Pi_{d,n}$ is finite.
\end{conjecture}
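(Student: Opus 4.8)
The plan is to pass from the global combinatorics of a compact packing to purely local data attached to the centres of the spheres, and then to exploit algebraic rigidity of that data. First I would record the basic local constraint. If $v$ is a vertex of the packing complex of a compact packing $\packf p$, then the link of $v$ in the complex is a simplicial triangulation of $S^{d-1}$ whose vertices are the centres of the spheres tangent to the sphere at $v$; radially projecting those centres onto the unit sphere about $v$ realises the link as a \emph{geodesic} triangulation of $S^{d-1}$. Applying the planar law of cosines to the triangle spanned by $v$ and two mutually tangent neighbours of radius-classes $i$ and $j$, with $v$ of class $k$, shows that the geodesic length of the corresponding edge of the link is a fixed algebraic function $\alpha_{ijk}(\mathbf r)$ of the radius vector $\mathbf r=(r_0,\dots,r_{n-1})$, $r_{n-1}=1$. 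So every labelled triangulation $T$ of $S^{d-1}$, with a distinguished centre label, carries a prescribed edge-length assignment depending on $\mathbf r$, and the requirement that this assignment be geometrically realisable on $S^{d-1}$ --- each facet a genuine spherical simplex, a Cayley--Menger inequality, and the angle sums around each codimension-two face of $T$ equal to $2\pi$, finitely many polynomial equations --- cuts out a semialgebraic set $Z(T)$ inside $\Delta:=\{\,0<r_0<\dots<r_{n-1}=1\,\}$.

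Next comes the translation. For a compact packing $\packf p$ with radius vector $\mathbf r$, every vertex link is, up to label-preserving isomorphism, some $T$ with $\mathbf r\in Z(T)$; letting $S(\packf p)$ be the set of link-types that occur, one has $\mathbf r\in\bigcap_{T\in S(\packf p)}Z(T)$. Finiteness of $S(\packf p)$ is a volume estimate: $\min_{i,j}\alpha_{ijk}(\mathbf r)$ is bounded below in terms of $r_0$, so every facet of every link has volume bounded below, which bounds the number of simplices in each link and hence the number of labelled isomorphism types that can appear. Now fix a heteroperturbative set $\mathcal H$ of labelled triangulations --- heuristically, a finite set for which the loci $\{Z(T):T\in\mathcal H\}$ are in sufficiently general position that every sub-collection $S\subseteq\mathcal H$ which is realisable as the link-type set of some packing has $\bigcap_{T\in S}Z(T)$ zero-dimensional --- and take ``$\packf p$ associated to $\mathcal H$'' to mean (something like) $S(\packf p)\subseteq\mathcal H$. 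Then the radius vector of any packing associated to $\mathcal H$ lies in $\bigcup_{S\subseteq\mathcal H}\bigcap_{T\in S}Z(T)$, a finite union of finite sets; that is the relative finiteness statement announced in the abstract.

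To obtain Conjecture~\ref{conj:main-conjecture} itself it remains to show that \emph{every} compact packing of $\Rd$ with exactly $n$ sizes is associated to \emph{some} heteroperturbative set --- equivalently, that one can choose finitely many heteroperturbative sets whose link-type collections exhaust all the $S(\packf p)$. I expect this to be the main obstacle, for two reasons. First, one must already show that non-trivial heteroperturbative sets exist at all, which needs delicate control of how the varieties $Z(T)$ intersect --- exactly the point the abstract flags as something to be ``discussed''. Second, and more seriously, without a finite $\mathcal H$ in hand one loses the uniform degree bound above as $r_0\to 0$, so the possible collections $S(\packf p)$ are not obviously finite in number, and ruling out a ``degenerate'' family of packings --- one whose collections $S(\packf p)$ all lie along a positive-dimensional component shared by the relevant $Z(T)$, producing a continuum of admissible radius vectors --- seems to require genuinely new input about which labelled triangulations of $S^{d-1}$ can simultaneously occur as the links of a single packing of $\Rd$. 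Thus the proposal reduces the conjecture to this covering step: the local-to-algebraic reduction, the finiteness of $S(\packf p)$, and the passage to a finite set of radii under the heteroperturbative hypothesis are the technical-but-routine parts, while the existence and structure of heteroperturbative sets and the behaviour of admissible packings as $r_0\to 0$ are where the difficulty concentrates.
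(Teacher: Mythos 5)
The statement you were asked to prove is Conjecture~\ref{conj:main-conjecture}, and the paper does not prove it: it is left open. What the paper actually establishes is the relative statement (Theorem~\ref{thm:main-theorem}) that $\Pi_{d,n}(\mathbf{S})$ is finite for each heteroperturbative set $\mathbf{S}$, together with the observation that $\Pi_{d,n}=\Pi_{d,n}(\mathbf{Q})$ for the set $\mathbf{Q}$ of Definition~\ref{def:define-Q}, so that a proof that $\mathbf{Q}$ is heteroperturbative would settle the conjecture. Your proposal arrives at essentially the same reduction, and you are right to flag the covering step --- showing that every compact packing is associated to some controlled heteroperturbative family --- as the genuine obstacle rather than claiming to close it. So, as a proof of the conjecture, your proposal has the same gap that the paper itself acknowledges, and your diagnosis of where the difficulty concentrates is accurate.

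Two points where your sketch of the relative result diverges from the paper, and which matter for how one would attack the missing step. First, your working definition of ``heteroperturbative'' --- a \emph{finite} family $\mathcal{H}$ whose realizability loci $Z(T)$ are in general position so that the relevant intersections are zero-dimensional --- is not the paper's. The paper's heteroperturbative sets are typically infinite (e.g.\ $\mathbf{W}$ of Example~\ref{exa:Winter-polytopes-heteroperturbative}, built from all inscribed simplicial polytopes containing their circumcenter), and the defining property is metric rather than algebro-geometric: any two combinatorially equivalent, identically labeled triangulations in the set that are not edge-isometric must have one corresponding edge strictly longer \emph{and} another strictly shorter. Uniqueness of the radius vector then follows from a monotonicity argument on realized angle symbols (Lemmas~\ref{lem:realize_angle_symbol_strict_monotone} and~\ref{lem:uniqueness-lemma}), not from finiteness of a semialgebraic intersection. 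Second, your volume estimate bounding the number of link types is controlled only in terms of $r_{0}$, and producing a lower bound on $r_{0}$ that is \emph{uniform over all packings} with $n$ sizes is precisely the hard content of the paper: it is obtained by strong induction on $n$ through essential sets, with the Bootstrapping Lemma (Lemma~\ref{lem:bootstrapping-lemma2-1}) propagating lower bounds on the ratios $\rho(k-2)/\rho(k-1)$ down from the finitely many $k$-essential sets for $k<n$. Calling that portion ``technical-but-routine'' understates it; it is where the heteroperturbative hypothesis is actually consumed, and the resulting bounds are non-constructive.
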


Although this conjecture remains open in full generality, some certain
special cases have been resolved and we discuss them shortly.

In this paper we prove a general result that can be used to show,
for all $d,n\in\N$ with $d,n\geq2$, that certain subsets of $\Pi_{d,n}$
are finite. To this end, in Section~\ref{subsec:spherical-triangulations-and-heteroperturbative-sets},
we define what we call heteroperturbative sets of labeled triangulations
of unit spheres (of any dimension). Roughly, what this means is that,
if a spherical triangulation from a heteroperturbative set is perturbed
while preserving its combinatorics, then one edge must grow in length,
and one edge must shrink in length (cf. Section~\ref{subsec:spherical-triangulations-and-heteroperturbative-sets}).
Every sphere in a compact sphere packing in $\Rd$ has canonically
associated to it a labeled triangulation of the unit sphere in $\Rd$
(cf. Section~\ref{subsec:Canonical-labeling-and}). For a heteroperturbative
set $\mathbf{S}$ of labeled triangulations of unit spheres and a
compact sphere packing $\packf p$ in $\Rd$, we say that \emph{$\packf p$
    is associated to $\mathbf{S}$ }if every sphere from $\packf p$ has
its canonical associated labeled triangulation being an element of
$\mathbf{S}$.

Our main result in this paper is:
\begin{thm}
    \label{thm:main-theorem}For all $d,n\in\N$ with $d,n\geq2$ and
    $\mathbf{S}$ any heteroperturbative set of labeled triangulations
    of unit spheres, the set
    \[
        \Pi_{d,n}(\mathbf{S}):=\set{\radii(\packf p)}{\begin{array}{c}
                \packf p~\text{a compact sphere packing in }\Rd,           \\
                \packf p\ \text{is associated to \ensuremath{\mathbf{S}},} \\
                |\radii(\packf p)|=n,\ \max\radii(\packf p)=1
            \end{array}}
    \]
    is finite.
\end{thm}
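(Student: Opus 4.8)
The plan is to stratify $\Pi_{d,n}(\mathbf S)$ by a finite combinatorial invariant, to realise each stratum as (the radius–tuple set of) a bounded semialgebraic set, and then to use the heteroperturbative hypothesis to exclude one–parameter families, so that each such set has dimension zero and is therefore finite. First I would extract the combinatorial finiteness that $\mathbf S$ provides. A heteroperturbative set is finite, and by definition of being associated to $\mathbf S$ the canonical labeled triangulation of every sphere of a packing $\packf p\in\Pi_{d,n}(\mathbf S)$ lies in $\mathbf S$; hence there is a uniform bound on the number of neighbours of any sphere, and only finitely many possibilities — say a set $\mathcal K$ — for the \emph{code} of a sphere, that is, for the pair consisting of the combinatorial triangulation of its link together with the labelling of the link vertices by the $n$ sizes. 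Thus the set $\mathfrak C(\packf p)\subseteq\mathcal K$ of codes occurring in $\packf p$ takes only finitely many values, and, writing $\vec r=(r_0<\cdots<r_{n-1}=1)$ for the radius tuple, $\Pi_{d,n}(\mathbf S)$ is partitioned into finitely many pieces indexed by subsets $\mathfrak C\subseteq\mathcal K$. It therefore suffices to fix one admissible $\mathfrak C$ (necessarily with every size appearing as a central label) and to prove that the set $A_{\mathfrak C}$ of all tuples $\vec r$ that arise from a packing with code set $\mathfrak C$ is finite.

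Next I would describe $A_{\mathfrak C}$ semialgebraically. If a sphere of radius $\rho$ has two mutually tangent neighbours of radii $a$ and $b$, then the chord of the unit sphere $S^{d-1}$ cut off by the two contact directions has squared length equal to an explicit rational function of $(\rho,a,b)$, invariant under common rescaling and strictly monotone (the subtended angle strictly decreasing in $\rho$, strictly increasing in $a$ and $b$). Hence, for each code $c\in\mathfrak C$ with central label $k$, every adjacent–vertex distance of the attached triangulation of $S^{d-1}$ is a prescribed algebraic function of $\vec r$, and the requirement that there exist points on $S^{d-1}$ realising the given combinatorial triangulation at these prescribed distances (with the circumradii of the constituent spherical simplices in the admissible range, and the fan of simplices around each codimension–two face closing up) is, by Tarski–Seidenberg, a semialgebraic condition on $\vec r$. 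Consequently $A_{\mathfrak C}$ is contained in a bounded semialgebraic subset of $(0,1]^{n-1}\times\{1\}$, and such a set is finite as soon as it has dimension zero, i.e. as soon as it contains no non-constant real-analytic arc.

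The crux is to rule out such an arc, and this is where heteroperturbativity enters. Suppose $\gamma$ were a non-constant real-analytic arc in $A_{\mathfrak C}$; restricting to a subinterval on which all radii stay strictly ordered and positive, I would pick a code $c\in\mathfrak C$ along which the induced edge lengths are not locally constant — such a $c$ exists because every central label occurs in $\mathfrak C$ and the subtended–angle function is strictly monotone in each argument, so the map sending $\vec r$ to the tuple of all induced edge lengths is locally finite-to-one. Then for parameters near a generic $t_0$ the realisations $R_c(\gamma(t))$ are genuine combinatorics-preserving perturbations of $R_c(\gamma(t_0))\in\mathbf S$, so the heteroperturbative property of $\mathbf S$ forces, for each such perturbation, one edge of this triangulation to be strictly longer and one strictly shorter than in $R_c(\gamma(t_0))$. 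Reading this off the leading Taylor coefficients of the one-variable analytic edge-length functions along $\gamma$ shows that the velocity $\dot\gamma$ makes one edge length infinitesimally increase and another infinitesimally decrease; applying the same to the time-reversed arc and combining with the strict monotonicity of the subtended-angle function together with the fixed ordering $r_0<\cdots<r_{n-1}=1$ — which constrains the signs that the directional derivatives of the various edge lengths can take once the direction of $\dot\gamma$ is fixed — I would derive a contradiction. Hence $A_{\mathfrak C}$ is a bounded, zero-dimensional semialgebraic set, therefore finite, and $\Pi_{d,n}(\mathbf S)$, a finite union of such sets, is finite.

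The main obstacle I anticipate is exactly this last step: heteroperturbativity constrains a single labeled triangulation, whereas a one-parameter family of radius tuples perturbs the triangulations of \emph{all} codes of $\mathfrak C$ at once and coherently — the same $\vec r$ for every sphere — so that ``one edge grows, one edge shrinks'' must be turned into a genuine incompatibility rather than something a balanced flex could accommodate. The two places needing real care will be (i) guaranteeing that along the family at least one code's realisation moves non-trivially, i.e. local finiteness of the global edge-length map, and (ii) handling the degenerate case in which the first-order variation of every relevant edge length vanishes, where one must pass to higher-order jets of the subtended-angle function along $\dot\gamma$ and still extract the sign clash; possible degenerations of $\gamma$ towards the boundary of $(0,1]^{n-1}\times\{1\}$ cause no further trouble once dimension zero has been established.
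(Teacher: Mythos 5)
Your opening reduction contains a fatal error: a heteroperturbative set is \emph{not} finite, and nothing in the definition bounds the number of vertices of its members. The examples in the paper ($\mathbf{T}_{2}$, the set of \emph{all} labeled triangulations of the unit circle, and $\mathbf{W}$, the triangulations coming from arbitrary inscribed simplicial polytopes) contain triangulations with arbitrarily many vertices. Consequently your set $\mathcal{K}$ of possible codes is a priori infinite: a sphere in a packing could, for all you have argued, have arbitrarily many tiny neighbours, since no lower bound on $r_{0}$ is available at the outset. Establishing precisely this missing bound --- a constant $N$, depending only on $d$, $n$ and $\mathbf{S}$, on the number of vertices of any link triangulation, obtained via a uniform lower bound on $r_{0}/r_{n-1}$ --- is the entire content of the paper's Bootstrapping Lemma and the strong induction on $n$ over ``essential sets''. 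You have assumed the conclusion of the hardest step; once finiteness of the code set is granted, the remaining work (each code set determines at most one radius tuple) is comparatively routine. Note also that the needed bound genuinely requires induction on $n$: the ratio $r_{0}/r_{1}$ in an $n$-size packing is controlled by the radii occurring in packings with fewer sizes, so no single-stratum semialgebraic argument can produce it.

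There is a second, smaller gap in your arc-exclusion step. Heteroperturbativity asserts that every combinatorics-preserving, non-edge-isometric perturbation within $\mathbf{S}$ has one edge strictly longer \emph{and} one strictly shorter; so observing that along $\dot\gamma$ ``one edge length increases and another decreases'' is exactly what the hypothesis permits and yields no contradiction. To contradict the hypothesis you must exhibit a perturbation in which \emph{all} edges move weakly in one direction with at least one strict. The paper does this (in its uniqueness lemma and in the Bootstrapping Lemma) by comparing two realizers $\rho,\sigma$ via $t_{0}:=\sup\{t:\ t\sigma<\rho\ \text{pointwise}\}$ and the nonnegative direction $\nu=\rho-t_{0}\sigma$, whose support misses the centre label of some code in a fundamental set; monotonicity of the angle symbols then forces every edge of that code's triangulation to be non-decreasing and one to strictly increase. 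Your sketch lacks this normalization, and the fixed ordering $r_{0}<\cdots<r_{n-1}$ alone does not force the edge-length derivatives along $\dot\gamma$ to share a sign. Incidentally, the paper's argument gives global uniqueness of the radii given the codes, not merely local rigidity, so the semialgebraic/dimension-zero machinery is not needed.
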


It is easily seen that the set of \emph{all} triangulations of the
unit circle in $\R^{2}$ is heteroperturbative (cf. Example~\ref{exa:R2-heteroperturbative}),
so the special case for $d=2$ of Conjecture~\ref{conj:main-conjecture}
is proven by applying Theorem \ref{thm:main-theorem} (this is the
main result of \cite{Messerschmidt2d}). However, the set all triangulations
of the unit sphere in $\R^{3}$ is not heteroperturbative (cf. Example~\ref{exa:R3-not-heteroperturbative}).
Therefore, to be able to apply Theorem~\ref{thm:main-theorem} in
attacking Conjecture~\ref{conj:main-conjecture} for $d=3$, one
is required to find an appropriate proper subset of the set of all
labelled triangulations of the unit sphere in $\R^{3}$, and additionally,
prove that it is heteroperturbative.

In general it is difficult to determine if a set of triangulations
of unit spheres is heteroperturbative. A consequence of recent work
by Winter \cite[Corollary~4.13]{WinterPolytopes} shows that the set,
that we denote $\mathbf{W}$ (cf. Example~\ref{exa:Winter-polytopes-heteroperturbative}),
of all triangulations of unit spheres of any dimension that are determined
from inscribed convex polytopes that contain the center of their circumsphere
in the interior, is heteroperturbative. Hence, non-trivial heteroperturbative
sets do exist, and by extension, Theorem~\ref{thm:main-theorem}
shows that non-trivial and fairly interesting subsets of $\Pi_{d,n}$
are finite.

We also exhibit a set of labeled spherical triangulations, denoted
$\mathbf{Q}$, that is such that $\Pi_{d,n}=\Pi_{d,n}(\mathbf{Q})$
for all $d,n\geq2$ (cf. Section~\ref{subsec:spherical-triangulations-and-heteroperturbative-sets}).
Clearly, a proof that $\mathbf{Q}$ is heteroperturbative would, through
application of Theorem~\ref{thm:main-theorem}, immediately prove
Conjecture~\ref{conj:main-conjecture}. However, whether or not $\mathbf{Q}$
is heteroperturbative is an open problem to the best of our knowledge.

$\smallskip$

Before continuing, we briefly remark on some relevant history regarding
the family of sets $\Pi_{d,n}$ for $d,n\in\N$.

It is easily seen that $|\Pi_{1,n}|=\infty$ for all $n\geq2$ and
that $|\Pi_{2,1}|=1$. For dimensions $d\geq3$, regular simplices
do not tesselate $\Rd$, because $2\pi$ is not an integer multiple
of the dihedral angles occurring in a regular $d$-simplex \cite[Remark~2]{ChoDihedralAngles}.
Hence $|\Pi_{d,1}|=0$ for all dimensions $d\geq3$. In particular,
the famous $E_{8}$ and Leech lattice sphere packings (cf. \cite{ConwaySloane})
are \emph{not }compact packings.

For dimension $2$, in 2006, Kennedy showed that $|\Pi_{2,2}|=9$
by computing all elements of the set $\Pi_{2,2}$ \cite{Kennedy2006}.
Determining $|\Pi_{2,3}|$ turned out to be more challenging and took
a number of years. A finite bound for $|\Pi_{2,3}|$ was determined
by the second named author in \cite{Messerschmidt2020}, and working
roughly at the same time $\Pi_{2,3}$ was computed and was shown to
have $164$ elements by Fernique, Hashemi, and Sizova in \cite{Fernique2021}.
A crucial ingredient in showing that $|\Pi_{2,3}|$ is finite is the
following result:
\begin{lem}
    \cite[Lemma~6.1]{Fernique2021}\label{lem:FerniqueBabyBootstrap}
    There exists a constant $K>0$ so that every compact disc packing
    $\packf p$ of $\R^{2}$ with $\radii(\packf p)=\{r_{0},r_{1},r_{2}\}$
    satisfying $0<r_{0}<r_{1}<r_{2}=1$ is such that
    \[
        K\leq\frac{r_{0}}{r_{1}}.
    \]
\end{lem}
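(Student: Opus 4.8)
\emph{Proof proposal.} The plan is to study the ``corona'' (flower of tangent neighbours) around a disc of the smallest radius, and to show that once $r_0/r_1$ is small the associated angle equation is so rigid that the packing is forced to use only one radius, contradicting $|\radii(\packf p)|=3$. The geometric input I would set up first: if a disc of radius $r$ is externally tangent to two mutually tangent discs of radii $s$ and $t$, the three centres span a Euclidean triangle with side lengths $r+s$, $r+t$, $s+t$, and the angle it subtends at the centre of the radius-$r$ disc is
\[
\theta(r;s,t)\;=\;\arccos\!\left(1-\frac{2st}{(r+s)(r+t)}\right).
\]
I would record the elementary facts that $\theta$ is well defined with $\theta(r;s,t)\in(0,\pi)$ for positive radii, is strictly decreasing in $r$ and strictly increasing in $s$ and in $t$, satisfies $\theta(r;r,r)=\pi/3$, and has the limits $\theta(r;r,t)\to\pi/2$ and $\theta(r;s,t)\to\pi$ when $r$ is small relative to $s$ and $t$. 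Since the packing complex of a compact disc packing is a straight-edged triangulation of $\R^2$ whose $2$-simplices are exactly the triples of mutually tangent discs, the triangles incident to a fixed disc $D$ fan around its centre, so the corresponding angles $\theta$ sum to $2\pi$; this is the \emph{corona equation} at $D$. For a disc $D_0$ of the smallest radius $r_0$, each such angle is at least $\theta(r_0;r_0,r_0)=\pi/3$, so $D_0$ has at most six corona members, and exactly six only if all of them have radius $r_0$.

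The crux is the claim that there is a universal $\delta>0$ such that in every compact disc packing of $\R^2$ with $0<r_0<r_1<r_2=1$ and $r_0/r_1<\delta$, the corona of every disc of radius $r_0$ consists of six discs of radius $r_0$. The corona of such a $D_0$ is a cyclic word of length $m\in\{3,4,5,6\}$ over the alphabet $\{\mathrm{S},\mathrm{L}\}$, with $\mathrm{S}$ a neighbour of radius $r_0$ and $\mathrm{L}$ a neighbour of radius $r_1$ or $1$ --- finitely many words. For each word $w$ I would compute the limit $\Lambda(w)$ of its corona sum as $r_0/r_1\to 0$ (an $\mathrm{SS}$-adjacency contributes $\pi/3$ exactly, an $\mathrm{SL}$-adjacency $\pi/2$, an $\mathrm{LL}$-adjacency $\pi$); then $\Lambda(w)$ is an integer multiple of $\pi/6$, equals $2\pi$ only for $w=\mathrm{S}^{6}$ and a short explicit list of further words, and --- the key point --- whenever $w$ contains an $\mathrm{L}$ the \emph{actual} corona sum is strictly smaller than $\Lambda(w)$ (since $\theta(r_0;r_0,L)<\pi/2$ and $\theta(r_0;L,L')<\pi$ for $r_0>0$). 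Hence a corona word $w\neq\mathrm{S}^{6}$ forces $\Lambda(w)>2\pi$; but these finitely many values exceed $2\pi$ by at least $\pi/6$, whereas the deficit between the true corona sum and $\Lambda(w)$ is bounded by $6\max\bigl(\tfrac{\pi}{2}-\theta(r_0;r_0,L),\ \pi-\theta(r_0;L,L')\bigr)$ over $L,L'\in\{r_1,1\}$, a quantity controlled by $r_0/r_1$ alone (using $r_0<r_1$, so $r_0/r_2=r_0<r_0/r_1$ as well) that tends to $0$. Choosing $\delta$ so that this deficit stays below $\pi/6$ proves the claim.

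To finish: suppose a compact disc packing $\packf p$ with $\radii(\packf p)=\{r_0,r_1,r_2\}$, $0<r_0<r_1<r_2=1$, had $r_0/r_1<\delta$. By the claim, the centre of every $r_0$-disc has all of its neighbours again centres of $r_0$-discs; since the $1$-skeleton of the packing complex is a connected graph (its underlying space being $\R^2$) and at least one $r_0$-disc exists, an induction along edges shows every disc has radius $r_0$, contradicting $|\radii(\packf p)|=3$. Hence no such packing exists, and the lemma holds with $K:=\delta$.

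The step I expect to be the main obstacle is obtaining a \emph{uniform} $\delta$ in the middle paragraph, which hinges on two things: that a smallest disc has at most six corona members (so that only finitely many words arise) --- the easy side of the angle estimate above --- and that the convergences $\theta(r_0;r_0,L)\to\pi/2$ and $\theta(r_0;L,L')\to\pi$ are uniform over $L,L'\in\{r_1,1\}$, which holds because each is a monotone function of $r_0/L\le r_0/r_1$ only. A minor but genuinely necessary ingredient is the topological observation that the packing complex has connected $1$-skeleton, immediate from its underlying space being $\R^2$.
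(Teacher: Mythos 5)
Your corona--deficit argument is sound, but note first that the paper does not prove this lemma at all: it is imported verbatim from Fernique--Hashemi--Sizova \cite[Lemma~6.1]{Fernique2021} (with the remark that $K$ can even be taken to be $\min\{r_{0}:\{r_{0},1\}\in\Pi_{2,2}\}$, a much sharper constant than any $\delta$ your soft limiting argument produces). As a self-contained proof, yours checks out: the packing complex triangulates $\R^{2}$, so the link of the centre of a smallest disc is a single cycle and the angles sum to $2\pi$; each angle is at least $\theta(r_0;r_0,r_0)=\pi/3$, so $m\le 6$ and only finitely many words arise; $\Lambda(w)\in\frac{\pi}{6}\mathbb{Z}$; any occurring word containing an $L$ has $2\pi=(\text{actual sum})<\Lambda(w)$ by the strict inequalities $\theta(r_0;r_0,L)<\pi/2$ and $\theta(r_0;L,L')<\pi$, hence $\Lambda(w)\ge 2\pi+\pi/6$; and the total deficit tends to $0$ uniformly as $r_0/r_1\to0$. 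Two small slips deserve flagging: the sentence ``a corona word $w\neq S^{6}$ forces $\Lambda(w)>2\pi$'' is false for $w=S^{m}$ with $m<6$ --- those words must instead be excluded directly by the corona equation, since their sum is exactly $m\pi/3\neq2\pi$; and $\theta(r_0;L,L')$ is \emph{not} a function of the single ratio $r_0/L$ when $L\neq L'$, so the uniformity should be argued by bounding $\theta(r_0;L,L')\ge\theta(r_0;r_1,r_1)$ via monotonicity in $s,t$, the right-hand side being a function of $r_0/r_1$ alone tending to $\pi$. Compared with what the paper actually does: its generalization of this statement, the Bootstrapping Lemma (Lemma~\ref{lem:bootstrapping-lemma2-1}), dispenses entirely with enumerating corona words and their limit angle sums, and instead uses the scaling device $t_{0}=\sup\{t:\forall j,\ t\sigma(j)<\rho(j)\}$ together with the heteroperturbative property (every combinatorial perturbation must both lengthen and shorten an edge); that is what lets the argument survive in dimension $d\ge3$, where no finite list of coronas with computable limiting angle sums exists. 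Your route is more elementary and gives an explicit (if very weak) constant, but is intrinsically two-dimensional.
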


We specifically make note of the order of the quantifiers in the above
result. The constant $K$ is independent of the choice of compact
packing with discs of three sizes and can be taken to be $\min\{r_{0}:\{r_{0},1\}\in\Pi_{2,2}\}$.
That the ratio of the small and medium size discs in a compact packing
with three sizes of discs is related to the radii that occur in compact
packings with two sizes of discs is indicative that an inductive argument
may be used to glean information on the sets $\Pi_{2,n}$ for $n>3$.
Exploiting this idea in \cite{Messerschmidt2d}, the second named
author showed that $|\Pi_{2,n}|$ is finite for all $n\geq2$. Although
the cardinality of these sets could be shown to be finite, the methods
in \cite{Messerschmidt2d} are non-constructive and do not produce
quantitative bounds on the cardinality of the sets $\Pi_{2,n}$ for
$n\geq2$.

For dimension $3$, in \cite{FerniqueTwoSpheres2021,FerniqueThreeSizes},
Fernique computed $\Pi_{3,2}$ and $\Pi_{3,3}$ which respectively
were shown to have cardinality $1$ and $4$. Fernique himself calls
his results disappointing for the reason that, in three dimensions,
compact packings of spheres of two or three sizes arise only through
the filling of interstitial holes of close-packings of unit spheres.
This is in contrast to the case in two dimensions where, for example,
there exists a rich variety of compact disc packings with three sizes
of discs that do not arise from compact disc packings with two sizes
of discs through merely filling in interstitial holes with discs (cf.
\cite{Fernique2021}). This contrast in richness raises the following
question:
\begin{question}
    Does there exist a dimension greater than two that admits a compact
    packing of spheres with two or more sizes which does not arise through
    filling interstitial holes in a lattice packing of unit spheres?
\end{question}

\smallskip

Throughout this paper we make use of standard terminology regarding
simplicial complexes and related concepts. The reader unfamiliar with
the terminology may find definitions in \cite{Munkres}. The specific
standard terms we make use of are: \emph{simplicial complex}, $k$\emph{-skeleton}
of a simplicial complex, \emph{abstract simplicial complex}, the \emph{underlying
    (topological) space} of a simplicial complex, the \emph{vertex scheme}
of a simplicial complex (i.e., the abstract simplicial complex that
results from forgetting all geometric information of the simplicial
complex), \emph{isomorphism of abstract simplicial complexes}, the
\emph{closed star} of a vertex in a simplicial complex, the \emph{link}
of a vertex in a simplicial complex.\medskip

We now describe the path toward proving Theorem~\ref{thm:main-theorem}.
The main flavor follows that of \cite{Messerschmidt2d}. Before giving
more detail, we briefly describe the idea, which consists of three
main parts. For a fixed dimension $d$, firstly we define an abstract
discrete structure that is shown to always occur in every compact
packing (cf. Section~\ref{sec:Fundamental-sets-of}). Secondly, we
show that if such an abstract discrete structure arises from a compact
packing associated to some heteroperturbative set of labeled triangulations
of unit spheres, then this structure uniquely determines the radii
of spheres in the compact packing (cf. Section~\ref{sec:Uniqueness}).
Thirdly, we show that if such an abstract discrete structure arises
from a compact packing with $n$ sizes of spheres that is associated
to some fixed heteroperturbative set, then there exists a bound on
the size of the structure and, more importantly, this bound can be
chosen so as to depend only on the number of sizes of spheres $n$
and the dimension $d$, and independently of any particular packing
(cf. Section~\ref{sec:Essential-sets}). We note that, although these
bounds are shown to exist, the argument to show their existence is
non-constructive. We conclude that only finitely many such abstract
discrete structures can arise from compact packings associated to
a heteroperturbative set, and the ones which do arise in this way
uniquely determine the radii in a compact packing associated to some
heteroperturbative set. This establishes Theorem~\ref{thm:main-theorem}.\smallskip

We now move to describing the content of this paper in more detail.

Preliminary definitions and results are described in a number of subsections
of Section~\ref{sec:Preliminaries}. In Section~\ref{subsec:Packing-codes},
we introduce what we call \emph{packing codes}. These are abstract
simplicial complexes with labeled vertices that will be used to abstractly
describe a sphere and its neighbors in a compact packing. We introduce
\emph{angle symbols}, \emph{realizations,} and \emph{realizers} in
Section~\ref{subsec:Angle-symbols-and}. Angle symbols are used to
abstractly represent angles in the triangle formed by connecting the
centers of three mutually tangent spheres with disjoint interiors
and indeterminate radii. A realization by a realizer assigns specific
values to indeterminates in formal arithmetic expressions. Typically
realized expressions will be expressions involving angle symbols derived
from packing codes. An important result is Lemma~\ref{lem:realize_angle_symbol_strict_monotone},
which establishes a monotone relationship between realizers and the
values of realized angle symbols. This result is used later, as a
first ingredient, in proving both Theorem~\ref{thm:canonical-realizers-are-unique-for-packings}
and Lemma~\ref{lem:bootstrapping-lemma2-1}, which we are yet to
discuss. In Section~\ref{subsec:spherical-triangulations}, we define
what we mean by a \emph{spherical simplicial complex} and by a \emph{triangulation
    of the unit sphere in $\Rd$.} We prove an easy lemma (Lemma~\ref{lem:spherical-triangulations-contain-center-in-interior})
regarding the geometry of spherical triangulations of unit spheres.
Although easy, this result is nevertheless crucial to be able to apply
Winter's results from \cite{WinterPolytopes} in the subsequent section.
Section~\ref{subsec:spherical-triangulations-and-heteroperturbative-sets}
sees definition of what we call \emph{heteroperturbative sets (of
    labeled spherical triangulations of unit spheres)} and we present
a few relevant examples, non-examples, and conjectured heteropertubative
sets. In particular, we show that the sets $\mathbf{T}_{2}$ and $\mathbf{W}$
(Examples~\ref{exa:R2-heteroperturbative} and ~\ref{exa:Winter-polytopes-heteroperturbative})
are heteroperturbative, that $\mathbf{T}_{3}$ (Example~\ref{exa:R3-not-heteroperturbative})
is not heteroperturbative, and we conjecture that $\mathbf{Q}$ (Definition~\ref{def:define-Q})
is heteroperturbative. In Section~\ref{subsec:Canonical-labeling-and},
we discuss how a compact packing $\packf p$ of spheres in $\Rd$
is canonically labeled, determines a canonical realizer, and how each
sphere in $\packf p$ canonically determines a labeled spherical triangulation
of the unit sphere $\Rd$, and by further forgetting the geometric
structure of the labeled triangulation, the sphere canonically determines
a packing code associated to the sphere.

In Section~\ref{sec:Fundamental-sets-of}, we define what we call
a \emph{fundamental set of packing codes}. We show that every compact
packing gives rise to such a fundamental set of packing codes (cf.
Theorem~\ref{thm:packings-determine-fundamental-set} and Figure~\ref{fig:pack-five-labelled}).

The main result in Section~\ref{sec:Uniqueness} is Theorem~\ref{thm:canonical-realizers-are-unique-for-packings},
showing that the radii of spheres in a compact packing associated
a heteroperturbative set is uniquely determined by any fundamental
set of packing codes that arises from the packing. As mentioned, Lemma~\ref{lem:realize_angle_symbol_strict_monotone}
is used together the defining property of a heteroperturbative set
to prove Lemma~\ref{lem:uniqueness-lemma} which shows how the existence
of labeled spherical triangulations in a heteroperturbative set that
are combinatorially equivalent to packing codes from a fundamental
set and whose edge lengths are determined through a realizer, actually
uniquely determines the realizer. Lemma~\ref{lem:packing-determines-triangulations-from-Q}
shows that labeled spherical triangulations as required in Lemma~\ref{lem:uniqueness-lemma}
always exist in every compact packing.

The Bootstrapping Lemma (Lemma~\ref{lem:bootstrapping-lemma2-1})
is proven in Section~\ref{sec:The-bootstrapping-lemma}. This lemma
is a generalization of Lemma~\ref{lem:FerniqueBabyBootstrap} and
is what makes the induction step work in a strong induction toward
proving the main technical result of the paper, Lemma~\ref{lem:essential-sets-finite-induction-step},
from whence the name. The Bootstrapping Lemma relates the ratios of
some of the values of two realizers under certain assumptions of the
existence of labeled spherical triangulations of unit spheres from
a heteroperturbative set that are combinatorially equivalent to a
packing code from fundamental set. Again as mentioned, Lemma~\ref{lem:realize_angle_symbol_strict_monotone}
along with the defining property of a heteroperturbative set is used
in proving The Bootstrapping Lemma.

Section~\ref{sec:Essential-sets} sees the proof of the main result
of this paper, Theorem~\ref{thm:main-theorem}. For $d,n\in\N$ with
$d,n\geq2$ and a heteroperturbative set $\mathbf{S}$, we introduce
what we call an \emph{$n$-essential set} \emph{for }$\mathbf{S}$\emph{
    in dimension }$d$, which adds further technical conditions on a fundamental
set, related to the existence of two realizers $\rho$ and $\sigma$
and of labeled triangulations from $\mathbf{S}$ relating the two
realizers $\rho$ and $\sigma$ to each other. We observe that these
further conditions are automatically satisfied by fundamental sets
that arise from an actual compact packing in $\Rd$ associated to
$\mathbf{S}$ with $n$ sizes of spheres: The canonical labeled triangulations
associated to spheres in the packing are taken, and for $\sigma$
and $\rho$ we take both equal to the canonical realizer for the packing.
This observation, along with the previous observation that radii of
the spheres in the packing being uniquely determined by the fundamental
set (Lemma~\ref{lem:uniqueness-lemma}), thus shows that the cardinality
of $\Pi_{d,n}(\mathbf{S})$ is at most cardinality of the set, denoted
$\essential_{d,n}(\mathbf{S})$, of all $n$-essential sets for $\mathbf{S}$
in dimension $d$, (cf. Lemma~\ref{lem:Pidn-cardinality-bounded-by-n-essential-sets}).
For any fixed $d$, we thus proceed to show that the set $\essential_{d,n}(\mathbf{S})$
is finite through strong induction on the number $n$. As the base
step of a strong induction, we show that the set $\essential_{d,2}(\mathbf{S})$
is finite (cf. Lemma~\ref{lem:2-ess-finite}). Lemma~\ref{lem:essential-sets-finite-induction-step},
the strong induction step, is the main technical result. Given that
the sets $\essential_{d,k}(\mathbf{S})$ are finite for all $k\in\{2,\ldots,n-1\}$,
using The Bootstrapping Lemma (Lemma~\ref{lem:bootstrapping-lemma2-1}),
we show that the set $\essential_{d,n}(\mathbf{S})$ is finite by
bounding the size of any element of $\essential_{d,n}(\mathbf{S})$
using only features of the finitely many elements in $\bigcup_{k=2}^{n-1}\essential_{d,k}(\mathbf{S})$.
This proves the main result of this paper in Corollary~\ref{cor:main-result}.

\section{Preliminaries\label{sec:Preliminaries}}

\subsection{Packing codes\label{subsec:Packing-codes}}

Let $d\in\N$ with $d\geq2$. Let $\Sigma$ be any set of symbols.
By a \emph{$d$-dimensional} \emph{packing code over $\Sigma$}, or
just a \emph{packing code}, we mean a symbol $c:T$, with $c\in\Sigma$
and $T$ an abstract homogeneous simplicial $(d-1)$-complex with
vertices labeled by elements from $\Sigma$. In a packing code $c:T$,
we call $c$ the \emph{center }and $T$ the \emph{neighbor complex}
of the packing code. For a vertex $v$ in $T$ we denote its label
by $\labelof v\in\Sigma$. We define $\codeneighborlabels(T):=\{\labelof v:v\text{ a vertex in }T\}$.
By $\codes(\Sigma)$, we denote the set of all $d$-dimensional packing
codes. The dependence on the dimension $d$ is suppressed in the notation
$\codes(\Sigma)$, but this should not cause confusion as $d$ is
fixed at all times.

\subsection{Angle symbols and realizations\label{subsec:Angle-symbols-and}}

Let $\Sigma$ be any set of symbols. For symbols $a,b,c\in\Sigma$
we denote the formal symbol
\[
    c_{b}^{a}:=\arccos\parenth{\frac{(c+a)^{2}+(c+b)^{2}-(a+b)^{2}}{2(c+a)(c+b)}}.
\]
We call $c_{b}^{a}$ an \emph{angle symbol} (over $\Sigma$). We \emph{always}
regard elements from $\Sigma$ in an angle symbol as indeterminates.
With three mutually tangent spheres with pairwise disjoint interiors,
with centers labeled $a,b$, and $c$ forming a triangle, the symbol
$c_{b}^{a}$ is meant to abstractly represent the magnitude of the
angle formed at $c$ (cf. Figure~\ref{fig:angle-symbol}). In the
angle symbol $c_{b}^{a}$, we call $c$ the \emph{vertex} of the symbol
$c_{b}^{a}.$

We define a \emph{realizer }to be a map $\rho:\Sigma\to(0,\infty)$.
Given a realizer $\rho:\Sigma\to(0,\infty)$ and a formal arithmetic
expression $E$, with symbols from $\Sigma$, we denote by $E|_{\rho}$
the expression $E$ with every symbol $s\in\Sigma$ occurring in $E$
replaced by $\rho(s).$ We call $E|_{\rho}$ \emph{the realization
    of $E$ by $\rho$.}

An important result that we use multiple times in this paper is the
following lemma that establishes a specific monotone relationship
between realizers and realized angle symbols when the values of the
realizer is kept constant on the vertex of the angle symbol.

\begin{figure}
    \centering \includegraphics[width=30mm]{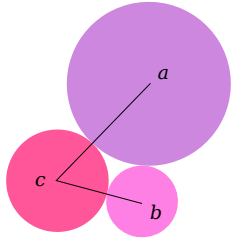}\caption{\label{fig:angle-symbol}The angle represented by the angle symbol
        $c_{b}^{a}.$}
\end{figure}

\begin{lem}
    \label{lem:realize_angle_symbol_strict_monotone}Let $\Sigma$ be
    any finite set of symbols and let $a,b,c\in\Sigma$ with $c\neq a$.
    Let $\rho:\Sigma\to(0,\infty)$ and let $\nu:\Sigma\to[0,\infty)$
    be a non-zero function.
    \begin{enumerate}
        \item The map $(0,\infty)\ni t\mapsto c_{b}^{a}|_{t\rho}$ is constant.
        \item If $\nu(c)=0$ and $\nu(a)>0$, then the map $[0,\infty)\ni t\mapsto c_{b}^{a}|_{\rho+t\nu}$
              is strictly increasing.
        \item If $\nu(c)=\nu(a)=\nu(b)=0$, then the map $[0,\infty)\ni t\mapsto c_{b}^{a}|_{\rho+t\nu}$
              is constant.
    \end{enumerate}
\end{lem}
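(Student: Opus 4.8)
The plan is to reduce all three parts to the monotonicity behaviour of a single rational function, by first replacing the realized angle symbol with a more transparent closed form. I will show that for every realizer $\rho$,
\[
c_{b}^{a}|_{\rho}\;=\;2\arctan\sqrt{\frac{\rho(a)\,\rho(b)}{\rho(c)\bigl(\rho(a)+\rho(b)+\rho(c)\bigr)}}.
\]
The derivation I have in mind: positivity of $\rho$ on $\{a,b,c\}$ makes the three numbers $\rho(c)+\rho(a)$, $\rho(c)+\rho(b)$, $\rho(a)+\rho(b)$ satisfy all three \emph{strict} triangle inequalities, so they are the side lengths of a genuine non-degenerate Euclidean triangle; by the law of cosines the angle $\psi$ opposite the side of length $\rho(a)+\rho(b)$ has $\cos\psi$ equal to the argument of $\arccos$ in the definition of $c_{b}^{a}$ (which is therefore automatically in $(-1,1)$), whence $\psi=c_{b}^{a}|_{\rho}$ since both lie in $(0,\pi)$, where $\cos$ is injective. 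Feeding this triangle into the standard half-angle tangent formula — its semiperimeter here equals $\rho(a)+\rho(b)+\rho(c)$, which makes $\tan(\psi/2)$ simplify exactly to $\sqrt{\rho(a)\rho(b)/(\rho(c)(\rho(a)+\rho(b)+\rho(c)))}$ — yields the displayed identity. (As a self-contained alternative, one simply checks the algebraic identity $\frac{(c+a)^{2}+(c+b)^{2}-(a+b)^{2}}{2(c+a)(c+b)}=\frac{1-\gamma}{1+\gamma}$ with $\gamma:=\frac{ab}{c(a+b+c)}$, and invokes $\cos\bigl(2\arctan\sqrt{\gamma}\bigr)=\frac{1-\gamma}{1+\gamma}$.) Writing $\gamma(x,y,z):=\frac{xy}{z(x+y+z)}$ (a strictly positive quantity for positive arguments), so that $c_{b}^{a}|_{\rho}=2\arctan\sqrt{\gamma(\rho(a),\rho(b),\rho(c))}$, and noting that $u\mapsto 2\arctan\sqrt{u}$ is strictly increasing on $(0,\infty)$, every claim to be proved becomes the corresponding claim about the relevant realized value of $\gamma$.

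For part (1), I will use that $\gamma$ is homogeneous of degree $0$, i.e.\ $\gamma(tx,ty,tz)=\gamma(x,y,z)$; hence $\gamma(t\rho(a),t\rho(b),t\rho(c))$ is independent of $t$ and $t\mapsto c_{b}^{a}|_{t\rho}$ is constant. For part (3), since $\nu$ vanishes at each of $a,b,c$ the realizer $\rho+t\nu$ agrees with $\rho$ on $\{a,b,c\}$, and as $c_{b}^{a}$ involves only those three symbols, $c_{b}^{a}|_{\rho+t\nu}=c_{b}^{a}|_{\rho}$ for all $t\ge 0$, which is constant.

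For part (2) I will first record the elementary monotonicity of $\gamma$: holding $y,z>0$ fixed, $\frac{\partial}{\partial x}\gamma(x,y,z)=\frac{y(y+z)}{z(x+y+z)^{2}}>0$, so $\gamma$ is strictly increasing in its first argument, and by the symmetry $\gamma(x,y,z)=\gamma(y,x,z)$ it is (weakly) increasing in its second. Since $\nu(c)=0$, setting $\alpha(t):=\rho(a)+t\nu(a)$ and $\beta(t):=\rho(b)+t\nu(b)$ gives $c_{b}^{a}|_{\rho+t\nu}=2\arctan\sqrt{\gamma(\alpha(t),\beta(t),\rho(c))}$, where $\alpha$ is strictly increasing (as $\nu(a)>0$) and $\beta$ is non-decreasing (as $\nu(b)\ge 0$). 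Then for $s<t$,
\[
\gamma(\alpha(s),\beta(s),\rho(c))\;\le\;\gamma(\alpha(s),\beta(t),\rho(c))\;<\;\gamma(\alpha(t),\beta(t),\rho(c)),
\]
using weak monotonicity in the second argument and then strict monotonicity in the first; composing with the strictly increasing map $u\mapsto 2\arctan\sqrt{u}$ shows $t\mapsto c_{b}^{a}|_{\rho+t\nu}$ is strictly increasing.

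I do not anticipate a serious obstacle: the only real content is the closed-form identity in the first paragraph, and the two points there that need care are (i) verifying that the $\arccos$-argument genuinely is the cosine of the relevant triangle angle, so that the half-angle formula legitimately applies, and (ii) checking that the a priori degenerate-looking cases $b=a$ and $b=c$ cause no trouble — which they do not, precisely because positivity of $\rho$ keeps every triangle involved non-degenerate. Once the identity $c_{b}^{a}|_{\rho}=2\arctan\sqrt{\gamma(\rho(a),\rho(b),\rho(c))}$ is in hand, all three claims are immediate.
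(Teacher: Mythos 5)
Your proof is correct, and it takes a genuinely different route from the paper's. The paper proves the lemma by differentiating the $\arccos$ expression directly: it tabulates all partial derivatives $\partial_{s}c_{b}^{a}$ for $s\in\{a,b,c,d\}$, observes that those with respect to $a$ and $b$ are positive, the one with respect to $c$ is negative, and the rest vanish, and then evaluates the directional derivative $\sum_{s}(\partial_{s}c_{b}^{a})|_{\rho+t\nu}\,\nu(s)$, which under the hypothesis $\nu(c)=0$ is a sum of non-negative terms with the term $s=a$ strictly positive in case (2) and all terms zero in case (3). You instead establish the closed form $c_{b}^{a}|_{\rho}=2\arctan\sqrt{\gamma(\rho(a),\rho(b),\rho(c))}$ with $\gamma(x,y,z)=\frac{xy}{z(x+y+z)}$ — which checks out, both via the half-angle formula for the tangency triangle (semiperimeter $a+b+c$, so $s-L$ for the two adjacent sides gives exactly $a$ and $b$) and via the algebraic identity $\frac{(c+a)^2+(c+b)^2-(a+b)^2}{2(c+a)(c+b)}=\frac{1-\gamma}{1+\gamma}$ — and then reads off all three parts from the degree-zero homogeneity and the elementary monotonicity of $\gamma$ in its first two arguments. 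The single derivative you do need, $\partial_{x}\gamma=\frac{y(y+z)}{z(x+y+z)^2}>0$, is correct, your handling of the coincidences $b=a$ and $b=c$ is adequate (the weak/strict inequality chain covers both), and positivity of $\rho+t\nu$ keeps everything well defined. What your approach buys is a conceptually cleaner argument that replaces the paper's somewhat error-prone table of five partial derivatives with one transparent rational function; what the paper's approach buys is sign information for the derivative in the vertex variable $c$ as well, though that is not needed for this statement.
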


\begin{proof}
    With distinct symbols $a,b,c,d\in\Sigma$, taking partial derivatives
    of the expression $c_{a}^{b}$ toward the indicated symbol, we obtain
    the following
    \[
        \begin{array}{ccc}
            \begin{array}{rcl}
                \partial_{a}c_{a}^{b}=\partial_{a}c_{b}^{a} & = & \frac{\sqrt{bc}}{(c+a)\sqrt{a}\sqrt{a+b+c}},\vspace{2mm}                     \\
                \partial_{a}c_{a}^{c}=\partial_{a}c_{c}^{a} & = & \frac{c}{(c+a)\sqrt{a^{2}+2ac}},\vspace{2mm}                                 \\
                \partial_{a}c_{a}^{a}                       & = & \frac{2\sqrt{c}}{(c+a)\sqrt{2a+c}},\vspace{4mm}                              \\
                \partial_{c}c_{a}^{b}=\partial_{c}c_{b}^{a} & = & -\frac{(a+b+2c)\sqrt{ab}}{(c^{2}+ab+ac+bc)\sqrt{c}\sqrt{a+b+c}},\vspace{2mm} \\
                \partial_{c}c_{a}^{c}=\partial_{c}c_{c}^{a} & = & -\frac{a}{(c+a)\sqrt{a^{2}+2ac}},
            \end{array} & \quad & \begin{array}{rcl}
                \partial_{d}c_{b}^{a} & = & 0, \\
                \partial_{d}c_{c}^{a} & = & 0, \\
                \partial_{d}c_{a}^{a} & = & 0, \\
                \partial_{d}c_{c}^{c} & = & 0, \\
                \partial_{c}c_{c}^{c} & = & 0.
            \end{array}\end{array}
    \]

    It is easily seen from the definition of the expression $c_{b}^{a}$
    that the map $(0,\infty)\ni t\mapsto c_{b}^{a}|_{t\rho}$ is constant.

    Let $t\in[0,\infty)$ be arbitrary. Assuming $\nu(c)=0$, then the
    directional derivative of map $(0,\infty)^{S}\ni\sigma\mapsto\alpha(x)|_{\sigma}$
    at $\rho+t\nu$ in the direction of $\nu$ is a positive scalar multiple
    of
    \begin{align*}
        \sum_{s\in\Sigma}(\partial_{s}c_{b}^{a})|_{\rho+t\nu}\ \nu(s) & =\text{\ensuremath{\sum_{s\in\Sigma\backslash\{c\}}}(\ensuremath{\partial_{s}c_{b}^{a}})\ensuremath{|_{\rho+t\nu}\ \nu}(s)}\geq0.
    \end{align*}
    Assuming further that $\nu(a)>0$, then at least one term of the above
    summation is non-zero, and hence the map $[0,\infty)\ni t\mapsto c_{b}^{a}|_{\rho+t\nu}$
    is strictly increasing. On the other hand, assuming $\nu(c)=\nu(a)=\nu(b)=0$
    then every term in the above summation is zero, and hence the map
    $[0,\infty)\ni t\mapsto c_{b}^{a}|_{\rho+t\nu}$ is constant.
\end{proof}

\subsection{Spherical triangulations of unit spheres\label{subsec:spherical-triangulations}}

Let $d\geq2$, $1\leq k\leq d$. Let $G\subseteq\Rd$ be a set of
$k$ points with $0\notin G$ so that so that $G\cup\{0\}$ is affinely
independent. By the \emph{spherical $(k-1)$-simplex }(defined by
$G$) we mean the central projection of the $(k-1)$-simplex defined
by $G$ to the unit sphere of $\Rd$ centered at $0$. Since $0\notin G$,
the spherical $(k-1)$-simplex is homeomorphic to the simplex defined
by $G$ through central projection, and all faces of the spherical
$(k-1)$-simplex are contained in great spheres of the unit sphere
of $\Rd$ of appropriate dimension.

By a \emph{spherical simplicial complex}, we mean a simplicial complex,
except with all simplices in the complex being spherical simplices
rather than usual simplices. By a \emph{spherical triangulation }of
the unit sphere in $\Rd$, we mean a homogeneous spherical simplicial
$(d-1)$-complex so that the underlying space of the spherical simplicial
complex is homeomorphic to the unit sphere through the identity map,
i.e., the spherical $(d-1)$-simplices of the spherical simplicial
complex tesselate the unit sphere of $\Rd$.

We will say a spherical triangulation is\emph{ combinatorially equivalent
}to an abstract simplicial complex $A$ if the vertex scheme of the
spherical triangulation is isomorphic to the abstract simplicial complex
$A$. By overloading the term, we will say two spherical triangulations
are \emph{combinatorially equivalent }if the vertex schemes of both
triangulations are isomorphic to the same abstract simplicial complex.

We will say that two combinatorially equivalent spherical triangulations
$P$ and $Q$ are \emph{edge-isometric} if, with respect to the geodesic
metric on the unit sphere, every edge of $P$ is equally long to the
corresponding edge of $Q$.

Before proceeding, we prove an elementary result regarding spherical
triangulations.
\begin{lem}
    \label{lem:spherical-triangulations-contain-center-in-interior}Let
    $d\geq2$ and let $P$ be a spherical triangulation of the unit sphere
    of $\Rd$.
    \begin{enumerate}
        \item There exists no hyperplane $H\subseteq\Rd$ (through the origin) so
              that all vertices of $P$ lie in one of the two closed half-spaces
              determined by $H$.\smallskip
        \item The interior of the closed convex hull of all vertices from $P$ is
              non-empty.\smallskip
        \item The center of the unit sphere, i.e. the origin, is contained in the
              interior of the closed convex hull of all vertices from $P$.
    \end{enumerate}
\end{lem}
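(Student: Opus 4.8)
The plan is to establish (1) directly by a central-projection argument, then to deduce (3) from (1), with (2) being an immediate consequence of (3).

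For (1), I would argue by contradiction. Suppose there is a unit vector $u\in\Rd$ with $\langle v,u\rangle\ge 0$ for every vertex $v$ of $P$. By definition, each closed spherical $(d-1)$-simplex of $P$ is the central projection $x\mapsto x/\|x\|$ of the convex hull $\mathrm{conv}(G)$ of a set $G$ of $d$ points with $G\cup\{0\}$ affinely independent; affine independence of $G\cup\{0\}$ forces $0\notin\mathrm{conv}(G)$, so the central projection is well defined on all of $\mathrm{conv}(G)$, and the vertices of this spherical simplex are the (radial projections of the) points of $G$, each lying in the half-space $\{x:\langle x,u\rangle\ge 0\}$. Since that half-space is a cone, every point of $\mathrm{conv}(G)$, and hence its central projection, also lies in it. Because the closed spherical $(d-1)$-simplices of $P$ cover the underlying space of $P$ (every simplex being a face of a $(d-1)$-simplex by homogeneity) and that underlying space is the whole unit sphere, it follows that $\langle z,u\rangle\ge 0$ for every unit vector $z$; taking $z=-u$ yields $-1\ge 0$, a contradiction.

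For (3), I would use the standard fact that for a compact convex set $C\subseteq\Rd$ and a point $p\in C$, one has $p\in\mathrm{int}(C)$ if and only if $C$ admits no supporting hyperplane through $p$; equivalently, if $p\notin\mathrm{int}(C)$ then there is a nonzero $u$ with $\langle x-p,u\rangle\le 0$ for all $x\in C$. Applying this with $p=0$ and $C$ the closed convex hull of all vertices of $P$: if $0\notin\mathrm{int}(C)$ we would obtain a nonzero $u$ with $\langle v,u\rangle\le 0$ for every vertex $v$, i.e.\ $\langle v,-u\rangle\ge 0$ for every vertex $v$, contradicting (1) applied to the unit vector $-u/\|u\|$. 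Hence $0\in\mathrm{int}(C)$, which is (3); and since this exhibits a point of $\mathrm{int}(C)$, statement (2) follows at once.

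I expect no serious obstacle. The only point requiring mild care is the bookkeeping in (1) concerning whether the defining set $G$ of a spherical simplex consists of vertices of $P$ lying on the sphere or of unprojected representatives of them, but since membership in the half-space cone $\{x:\langle x,u\rangle\ge 0\}$ is preserved under positive scaling, this distinction is immaterial to the argument.
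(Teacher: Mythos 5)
Your proof is correct and follows essentially the same route as the paper's: part (1) via the observation that the central projections of simplices contained in a closed half-space (a cone through the origin) cannot cover the whole sphere, and part (3) via a supporting-hyperplane/separation argument that contradicts (1), the only cosmetic difference being that you obtain (2) as a corollary of (3) while the paper proves (2) first and uses it to justify invoking the Supporting Hyperplane Theorem. One minor point of care: the equivalence you cite is stated for $p\in C$ and you do not verify $0\in C$; the paper handles this by explicitly splitting into the cases where the origin is a boundary point or an exterior point of $C$ (using the Separation Theorem in the latter), a distinction your argument absorbs with no real change.
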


\begin{proof}
    We prove (1). Suppose $H$ is a hyperplane so that all vertices of
    $P$ lie in one of the two closed half-spaces determined by $H$.
    By our definition of spherical simplices through central projection,
    all simplices in $\Rd$ that define the spherical simplices of $P$
    lie in this same closed half-space. There thus exists a point on the
    unit sphere in the opposite open half-space of $H$, that is not covered
    by $P$. We conclude that no such hyperplane $H$ exists.

    We prove (2). Suppose that the closed convex hull $C$ of all vertices
    from $P$ has empty interior. Then $C$ is contained in some affine
    hyperplane $H$ of $\Rd$. Hence $C$ lies inside, or to one side
    of the hyperplane through the origin parallel to $H$, contradicting
    (1).

    We prove (3). Suppose that the closed convex hull $C$ of all vertices
    of $P$ does not contain the center of the unit sphere as an interior
    point. There are two possibilities, either the center is a boundary
    point of $C$, or is an exterior point of $C$. By (2), $C$ has non-empty
    interior. In both cases, using either the Supporting Hyperplane Theorem
    or the Separation Theorem (see for instance \cite[Theorems~2 and~3, p.133]{LuenbergerOptimization}),
    there exists a hyperplane so that all vertices of $P$ lie in one
    of the two closed half-spaces defined by this hyperplane, contradicting
    (1).
\end{proof}

\subsection{Labeled spherical triangulations and heteroperturbative sets\label{subsec:spherical-triangulations-and-heteroperturbative-sets}}

By $\mathbf{T}$ we denote the set of all labeled spherical triangulations
of unit spheres of any finite dimension with the center of the unit
sphere (the origin) and all vertices of the triangulation carrying
labels from $\N\cup\{0\}$. For $P\in\mathbf{T}$ and vertex $v$
in $P$, we define the notation $\labelof v$ to denote the label
in $\N\cup\{0\}$ of the vertex $v$ in $P$.

Let $P,Q\in\mathbf{T}$. We say $P$ and $Q$ are \emph{combinatorially
    equivalent with identical labels }if $P$ and $Q$ are combinatorially
equivalent and all pairs the corresponding vertices of $P$ and $Q$
and their centers carry identical labels. This notion is an equivalence
relation on $\mathbf{T}$ and we denote the equivalence class of $P$
in $\mathbf{T}$ by $[P]$.

We introduce the term `heteroperturbative' subset of $\mathbf{T}$,
which we define formally in the next paragraph but first describe
intuitively to hopefully aid the reader's understanding. Informally,
a heteroperturbative subset of $\mathbf{T}$ is such that if one perturbs
a labeled spherical triangulation from the heteroperturbative set,
(while retaining its combinatorial structure and also remaining inside
the heteroperturbative set), there must exist an edge of the triangulation
that grows in length and another edge that shrinks in length.

Formally, Let $\mathbf{S}\subseteq\mathbf{T}$ and let $R\in\mathbf{S}$.
For $P\in[R]\cap\mathbf{S}$ and \emph{$Q\in\mathbf{S}$}, we say
that $Q$ is a \emph{perturbation of $P$ }if $Q\in[R]\cap\mathbf{S}$
and $P$ is not edge-isometric to $Q$. We say the equivalence class
$[R]\cap\mathbf{S}$ in $\mathbf{S}$ is \emph{heteroperturbative,}
if the following holds for any pair $P,Q\in[R]$: If $P$ and $Q$
are not edge-isometric, then there exist two pairs of respectively
corresponding edges, $uv$ and $xy$ of $P$, and $u'v'$ and $x'y'$
of $Q$, so that, with respect to the geodesic metric on the unit
sphere, the edge $uv$ in $P$ is strictly longer than the corresponding
edge $u'v'$ in $Q$; and, on the other hand, the edge $xy$ in $P$
is strictly shorter than the corresponding edge $x'y'$ in $Q$. We
say that $\mathbf{S}$ is a \emph{heteroperturbative set} (of labeled
spherical triangulations of unit spheres) if all equivalence classes
in $\mathbf{S}$ are heteroperturbative.

\medskip

Whether non-trivial/sufficiently interesting heteroperturbative sets
even exist is an obvious question, which we now briefly discuss.

It is an easy exercise to see that the set of all triangulations of
the unit circle in $\R^{2}$ is heteroperturbative:
\begin{example}
    \label{exa:R2-heteroperturbative}Define $\mathbf{T}_{2}\subseteq\mathbf{T}$
    as all labeled spherical triangulations of the unit circle in $\R^{2}$.
    The geodesic lengths of all edges in a triangulation from $\mathbf{T}_{2}$
    sum to $2\pi$. Hence, for any $P\in\mathbf{T}_{2}$, we cannot have
    \emph{all }edges in a perturbation of $P$ grow (shrink) as this would
    mean that the sum of the edge lengths in the perturbation sum to strictly
    more (less) than $2\pi$. We note that here the labels on vertices
    are superfluous.
\end{example}

In contrast to the case for $\R^{2},$ the set of all labeled triangulations
of the unit sphere in $\R^{3}$ is not heteroperturbative, as the
following example demonstrates:
\begin{example}
    \label{exa:R3-not-heteroperturbative}Define $\mathbf{T}_{3}\subseteq\mathbf{T}$
    as all labeled spherical triangulations of the unit sphere in $\R^{3}$.
    Triangulating the unit sphere in $\R^{3}$ with an equatorial strip
    of bisected darts (cf. Figure~\ref{fig:equatorial-chevrons}) shows
    that $\mathbf{T}_{3}$ is not heteroperturbative.
\end{example}

\begin{figure}[t]
    \centering \includegraphics[width=0.7\textwidth]{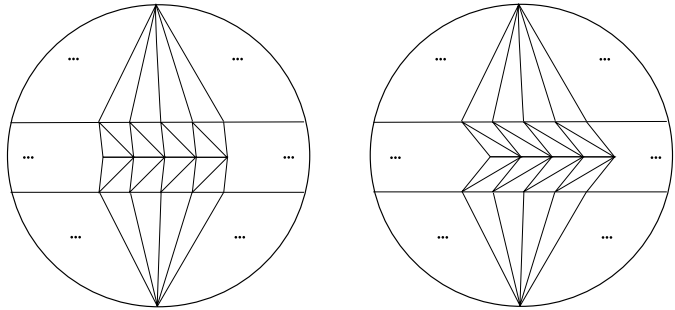}

    \caption{\label{fig:equatorial-chevrons}The set $\mathbf{T}_{3}$ is not heteroperturbative.
        Triangulate the unit sphere of $\protect\R^{3}$ with an equatorial
        strip of bisected darts as indicated on the left of the figure. By
        rotating the vertices and edges on the equator around the polar axis
        we obtain a combinatorially equivalent triangulation, as on the right.
        On the right, with respect to the geodesic metric on the sphere, some
        edges have increased in length when compared to the corresponding
        edges on the left, but no edge on the right has decreased in length
        when compared to the corresponding edge on the left.}
\end{figure}

An interesting heteroperturbative set of spherical triangulations
arises as a consequence of recent work by Winter on convex polytopes
from \cite{WinterPolytopes}. The following result is an easy consequence
of \cite[Corollary~4.13]{WinterPolytopes}:
\begin{thm}
    [Winter]\label{thm:Winter}Let $d\geq2$ and let $Y$ and $Z$ be
    combinatorially equivalent convex polytopes inscribed by the unit
    sphere of $\Rd$ so that the origin is contained in the interior of
    $Z$. If there exists an edge in $Z$ that is strictly shorter (with
    respect to Euclidean norm) than the corresponding edge of $Y$, then
    there exists an edge of $Z$ that is strictly longer (with respect
    to Euclidean norm) than the corresponding edge of $Y$.
\end{thm}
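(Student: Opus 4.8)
The plan is to obtain the statement as the contrapositive of (a convenient reformulation of) \cite[Corollary~4.13]{WinterPolytopes}. First I would record that corollary in the form I want to apply: for combinatorially equivalent convex polytopes $P$ and $Q$ inscribed in the unit sphere of $\Rd$ with the origin lying in the interior of $P$, if every edge of $P$ is at most as long (in Euclidean norm) as the corresponding edge of $Q$, then every edge of $P$ has exactly the same Euclidean length as the corresponding edge of $Q$. It is worth noting why a statement of this kind is natural: since all vertices of an inscribed polytope lie on the unit sphere, for an edge $uv$ one has $\|u-v\|^{2}=2-2\langle u,v\rangle$, so comparing the lengths of corresponding edges is the same as comparing the inner products $\langle u,v\rangle$ of adjacent vertex pairs, and the assertion becomes precisely a rigidity statement about inscribed realizations of a fixed combinatorial type whose circumcenter is an interior point --- the sort of conclusion Winter's results yield.

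Granting this reformulation, the argument is short and I would run it by contradiction. Suppose $Y$ and $Z$ are as in the hypothesis, that some edge of $Z$ is strictly shorter than the corresponding edge of $Y$, but --- contrary to what we want --- that no edge of $Z$ is strictly longer than the corresponding edge of $Y$. The last condition says exactly that every edge of $Z$ is at most as long as the corresponding edge of $Y$. I would then apply the reformulated corollary with $P:=Z$ and $Q:=Y$; this is legitimate because the origin lies in the interior of $Z$ by hypothesis and because $Z$ and $Y$ are combinatorially equivalent. The conclusion is that every edge of $Z$ has the same Euclidean length as the corresponding edge of $Y$, which contradicts the assumed existence of an edge of $Z$ strictly shorter than the corresponding edge of $Y$. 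Hence some edge of $Z$ is strictly longer than the corresponding edge of $Y$, as required.

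All of the mathematical weight is carried by Winter's corollary, so the only real work --- and the step I expect to be the main obstacle --- is the bookkeeping in the first paragraph: verifying that ``inscribed in the unit sphere centered at the origin'' provides exactly the normalization under which \cite[Corollary~4.13]{WinterPolytopes} is stated (so that the origin plays the role of the circumcenter there), that Winter's notion of combinatorial type agrees with isomorphism of face lattices as used here, and --- most delicately --- that the ``origin in the interior'' hypothesis in Winter's statement is the one attached to the polytope whose edges are being shortened, i.e.\ to $Z$, and not to $Y$ or to both. Should \cite[Corollary~4.13]{WinterPolytopes} instead be phrased infinitesimally, as a constraint on edge-length-monotone deformations of a single inscribed polytope rather than as a comparison of two, one would additionally need a continuity argument over the space of inscribed polytopes of the given combinatorial type to pass to the global comparison used above; I would expect supplying that bridge to be the only further ingredient needed, with the deduction otherwise immediate.
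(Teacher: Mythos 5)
The paper gives no written proof of this theorem---it is stated as an ``easy consequence of \cite[Corollary~4.13]{WinterPolytopes}''---and your contrapositive derivation is exactly the bookkeeping that phrase elides, so you are taking the same route. Your flagged caveat (that the ``origin in the interior'' hypothesis in Winter's corollary must attach to the polytope $Z$ whose edges are being weakly shortened) is the one genuine point to check against Winter's actual statement, and it is consistent with how the present paper uses the theorem, where Lemma~\ref{lem:spherical-triangulations-contain-center-in-interior} is invoked precisely to supply that interiority condition.
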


\begin{example}
    \label{exa:Winter-polytopes-heteroperturbative}Define the set $\mathbf{W}\subseteq\mathbf{T}$
    as all spherical triangulations $P\in\mathbf{T}$ of a unit sphere
    of any finite dimension $d\geq2$ for which the convex hull of the
    vertex set of $P$ forms a simplicial polytope whose face lattice
    is isomorphic, as a simplicial complex, to the spherical triangulation
    $P$ through central projection onto the unit sphere. By strict monotonicity
    of $\arcsin$ there exists a strictly monotone relationship between
    the length of an edge of the polytope and the geodesic length of the
    corresponding edge of the triangulation $P$. By Lemma~\ref{lem:spherical-triangulations-contain-center-in-interior},
    the convex hull of the vertex set of $P$ contains the center of the
    unit sphere as an interior point, and hence by Theorem~\ref{thm:Winter},
    $\mathbf{W}$ is seen to be heteroperturbative. Again, here the labels
    on vertices are superfluous.
\end{example}

We point out that $\mathbf{W}$ as defined in the previous example
is a proper subset of $\mathbf{T}$. Although it is easily seen that
$\mathbf{T}_{2}\subseteq\mathbf{W},$ the same is not true for $\mathbf{T}_{3}$,
(cf. Figure~\ref{fig:split-meridian}), as the face lattice of convex
hull of the vertices of a spherical triangulation in $\R^{3}$ can
be different to the vertex scheme of the triangulation.

\begin{figure}[b]
    \centering \includegraphics[width=0.7\textwidth]{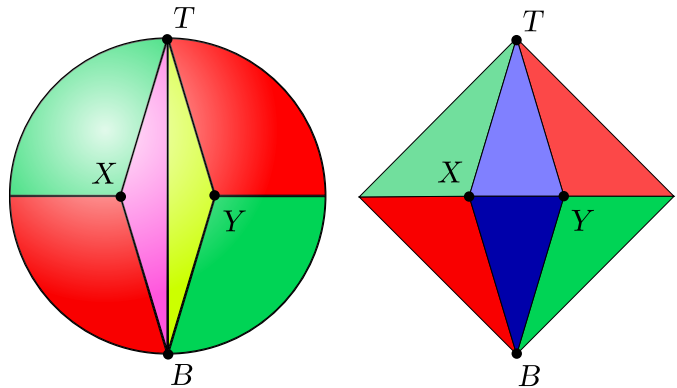}

    \caption{\label{fig:split-meridian}The vertex scheme of a spherical triangulation
        of the unit sphere in $\protect\R^{3}$ need not be isomorphic to
        face lattice of the polytope obtained as the convex hull of all vertices
        of the trangulation. Consider an octahedral triangulation of the unit
        sphere of $\protect\R^{3}$ with the vertex at the north pole moved
        slightly down on one of the meridians so as to not be lie on the vertical
        line through the origin. We split this meridian into two rather ``skinny''
        triangles as in the figure on the left. However, two vertices $X$
        and $Y$ can then ``see'' each other over the chord connecting the
        vertices $T$ and $B$. It is then seen that the face lattice of the
        polytope formed as the convex hull of the vertices of the triangulation
        is not isomorphic to the vertex scheme of the triangulation.}
\end{figure}

\medskip

A directly relevant set of spherical triangulations is the one we
now define.
\begin{defn}
    \label{def:define-Q}Define the set $\mathbf{Q}\subseteq\mathbf{T}$
    to be all labeled spherical triangulations $P\in\mathbf{T}$ for which
    there exists a monotone map $\rho:\N\cup\{0\}\to(0,\infty)$ so that,
    with $c\in\N\cup\{0\}$ denoting the label attached to the center
    of the unit sphere in $P$, both of the following conditions hold:
    \begin{enumerate}
        \item For all pairs of distinct vertices $v$ and $w$ of $P$ (whether
              connected by an edge or not), the geodesic distance in the unit sphere
              from $v$ to $w$ is at least $c_{\labelof w}^{\labelof v}|_{\rho}$.
        \item For all edges $vw$ in $P$, the geodesic length in the unit sphere
              of of $vw$ is exactly $c_{\labelof w}^{\labelof v}|_{\rho}$.
    \end{enumerate}
\end{defn}

Intuitively, such a labeled spherical triangulation triangulation
$P\in\mathbf{Q}$ describes an arrangement of spheres around a central
sphere, and with tangencies compatible with the labeled spherical
triangulation $P$ in the following way: The central sphere, centered
at the origin, has radius $\rho(c)$. By scaling, every vertex $v$
of $P$ corresponds to a point of tangency on the central sphere with
a neighboring sphere of radius $\rho(\labelof v)$. The first condition
of Definition~\ref{def:define-Q} ensures that the spheres of respective
radii $\rho(\labelof v)$ and $\rho(\labelof w)$ that are tangent
to the central sphere at points corresponding to respectively $v$
and $w$ have disjoint interiors. The second condition of Definition~\ref{def:define-Q}
ensures that the spheres with respective radii $\rho(\labelof v)$
and $\rho(\labelof w)$ that are tangent to the central sphere at
the points corresponding to $v$ and $w$ are tangent to each other.

By construction, canonically labeled spherical triangulations (cf.
Section~\ref{subsec:Canonical-labeling-and}) that arise in any compact
sphere packing $\packf p$ are members of $\mathbf{Q}$. A natural
question is whether or not $\mathbf{Q}$ is heteroperturbative.
\begin{conjecture}
    \label{conj:Q-is-heteroperturbative}The set $\mathbf{Q}$ of labeled
    spherical triangulations, as defined in Definition~\ref{def:define-Q},
    is heteroperturbative.
\end{conjecture}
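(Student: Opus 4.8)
The plan is to reduce the assertion to a rigidity statement about realizers, using Lemma~\ref{lem:realize_angle_symbol_strict_monotone}, and then to establish that rigidity in the spirit of Example~\ref{exa:Winter-polytopes-heteroperturbative} by forcing the configuration into the range of Winter's theorem (Theorem~\ref{thm:Winter}); the hard part, as I will explain, is that $\mathbf{Q}$ is not obviously contained in $\mathbf{W}$.

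First I would set up the reduction. Fix $R\in\mathbf{Q}$ and let $c_{0}$ be the label of the center, which is common to the whole class $[R]\cap\mathbf{Q}$. Let $P,Q\in[R]\cap\mathbf{Q}$ be non-edge-isometric, with realizers $\rho$ and $\sigma$ as in Definition~\ref{def:define-Q}. By Lemma~\ref{lem:realize_angle_symbol_strict_monotone}(1) I may rescale $\sigma$ so that $\sigma(c_{0})=\rho(c_{0})$ without altering any realized angle symbol, hence without leaving $[R]\cap\mathbf{Q}$; put $\nu:=\sigma-\rho$, so $\nu(c_{0})=0$. The length $(c_{0})_{\labelof w}^{\labelof v}|_{\rho}$ of an edge $vw$ of $P$ depends only on the realizer values at the three labels $c_{0},\labelof v,\labelof w$; restricting $\nu$ to those three labels and applying Lemma~\ref{lem:realize_angle_symbol_strict_monotone}(2),(3) gives: the edge $vw$ is strictly longer in $Q$ than in $P$ when $\nu(\labelof v),\nu(\labelof w)\ge 0$ with one strictly positive, strictly shorter in $Q$ when $\nu(\labelof v),\nu(\labelof w)\le 0$ with one strictly negative, and equally long when $\nu(\labelof v)=\nu(\labelof w)=0$. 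Colour each vertex by the sign of $\nu$ on its label. Then heteroperturbativity of the pair follows as soon as one produces both an edge lying in the non-negatively coloured vertices and incident to a positively coloured one, and an edge lying in the non-positively coloured vertices and incident to a negatively coloured one.

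Two issues remain after this reduction. The first, and essential, one is to rule out \emph{domination}: the case $\nu\ge 0$, $\nu\neq 0$, and its mirror. For $d=2$ the whole statement is already Example~\ref{exa:R2-heteroperturbative}, where one uses the linear invariant that edge lengths sum to $2\pi$; for $d\ge 3$ no such linear invariant is available, since the spherical volumes of the $(d-1)$-simplices always sum to the volume of $S^{d-1}$ regardless of the edge lengths (this is exactly why $\mathbf{T}_{3}$ fails heteroperturbativity, cf.\ Example~\ref{exa:R3-not-heteroperturbative}), so a different mechanism is needed. Here I would try to promote $P$ and $Q$ to polytopes: by Lemma~\ref{lem:spherical-triangulations-contain-center-in-interior} the convex hull of the vertex set of a spherical triangulation is inscribed in the unit sphere and contains the origin in its interior, and the hope is that the defining conditions of $\mathbf{Q}$ --- in particular condition~(1), forcing non-adjacent vertices to be no closer than the tangency of the corresponding spheres requires --- make this convex hull simplicial with boundary complex equal to $P$ under central projection; that is, that $\mathbf{Q}\subseteq\mathbf{W}$. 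If so, then for $P,Q\in[R]\cap\mathbf{Q}$ the two convex hulls are combinatorially equivalent inscribed polytopes with the origin interior, and Theorem~\ref{thm:Winter} --- via the strictly monotone ($\arcsin$) relation between chord length and geodesic length, exactly as in Example~\ref{exa:Winter-polytopes-heteroperturbative} --- forbids domination. The second, lighter issue is to upgrade ``an edge of one of the two kinds'' to ``edges of both kinds'': the completely degenerate configuration, in which every positively coloured vertex is adjacent only to negatively coloured ones and conversely, would make the $1$-skeleton of $P$ bipartite, impossible since for $d\ge 3$ the triangulation contains triangles; the remaining one-sided degeneracy (say the negatively coloured vertices form an independent set all of whose neighbours are positive) would require a separate short argument, for which I would again use condition~(1) of Definition~\ref{def:define-Q} together with monotonicity of the realizers.

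I expect the inclusion $\mathbf{Q}\subseteq\mathbf{W}$, equivalently the polytopality step, to be the crux --- and it may genuinely fail, since a triangulation in $\mathbf{Q}$ could have a ``split-meridian''-type edge running through the interior of the convex hull of its vertices, as in Figure~\ref{fig:split-meridian}, which condition~(1) does not obviously exclude. In that event the fallback is to prove the anti-domination statement directly for inscribed spherical triangulations coming from a realizer, by a Cauchy--Alexandrov-type rigidity argument on $S^{d-1}$: deform along $\rho+t\nu$ and track the induced change in the angle defects around the vertices, using that these defects all vanish for a genuine triangulation. Even short of the full conjecture, the reduction above proves heteroperturbativity of $[R]\cap\mathbf{Q}$ whenever that class lies inside $\mathbf{W}$, which in particular recovers the case $d=2$ of Conjecture~\ref{conj:main-conjecture}.
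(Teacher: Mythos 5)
This statement is a \emph{conjecture} in the paper: the authors explicitly state that whether $\mathbf{Q}$ is heteroperturbative is an open problem, and they give no proof. Your proposal is therefore to be judged as a candidate proof of an open statement, and it does not close the gap. The central step of your main route --- the inclusion $\mathbf{Q}\subseteq\mathbf{W}$ --- is in fact \emph{false}, and the paper itself contains the counterexample: by Lemma~\ref{lem:packing-determines-triangulations-from-Q}, the canonical labeled triangulation associated to any sphere of a compact packing lies in $\mathbf{Q}$, yet Figure~\ref{fig:fernique-example-not-in-W} exhibits a compact packing of $\R^{3}$ with three sizes of spheres whose large spheres have canonical triangulations \emph{not} in $\mathbf{W}$ (a split-meridian-type failure exactly as in Figure~\ref{fig:split-meridian}). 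So condition~(1) of Definition~\ref{def:define-Q} does not force polytopality, Theorem~\ref{thm:Winter} cannot be invoked, and the anti-domination step --- which you correctly identify as the crux, given that no linear invariant of edge lengths survives past $d=2$ --- remains entirely unproved. The fallback you offer (a Cauchy--Alexandrov-type rigidity argument tracking angle defects along $\rho+t\nu$) is a one-sentence suggestion with no supporting argument, and it is precisely the kind of statement that Example~\ref{exa:R3-not-heteroperturbative} shows can fail for general spherical triangulations; you would need to isolate what property of $\mathbf{Q}$ excludes the equatorial-chevron mechanism, and you have not.

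There are also secondary gaps in the reduction itself. Lemma~\ref{lem:realize_angle_symbol_strict_monotone} controls the realized angle symbol only when the perturbation $\nu$ has constant sign on the labels involved; for an edge $vw$ with $\nu(\labelof v)>0$ and $\nu(\labelof w)<0$ it says nothing, so your sign-colouring argument cannot determine whether such edges grow or shrink, and the claimed dichotomy (produce one all-nonnegative edge with a positive endpoint and one all-nonpositive edge with a negative endpoint) does not exhaust the configurations that must be handled. The ``one-sided degeneracy'' you defer to ``a separate short argument'' is likewise not supplied. What survives of the proposal is the correct and useful observation that $[R]\cap\mathbf{Q}$ is heteroperturbative whenever it lies inside $\mathbf{W}$ --- but that is already the content of Example~\ref{exa:Winter-polytopes-heteroperturbative}, and the paper's Figure~\ref{fig:fernique-example-not-in-W} shows it does not cover the cases where the conjecture actually has content.
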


A proof of Conjecture~\ref{conj:Q-is-heteroperturbative} together
with Theorem~\ref{thm:main-theorem} will immediately provide a proof
of Conjecture~\ref{conj:main-conjecture}.

\subsection{Canonical labelings, canonical realizers, canonical spherical triangulations,
    and packing codes determined by compact sphere packings\label{subsec:Canonical-labeling-and}}

Let $d,n\in\N$ with $d,n\geq2$. Let $\packf q$ be any collection
of spheres in $\Rd$ with $\radii(\packf q)=\{r_{0},r_{1},\ldots,r_{n-1}\}$
satisfying $0<r_{0}<r_{1}<\ldots<r_{n-1}$. By the \emph{canonical
    labeling of $\packf q$, }we mean attaching the label $j\in\{0,\ldots,n-1\}$
to the center of every sphere of radius $r_{j}$ in $\packf q$. With
$\Sigma:=\{0,\ldots,n-1\}$, we define the \emph{canonical realizer}
$\rho:\Sigma\to(0,\infty)$ of $\packf q$ as $\rho(j):=r_{j}$, for
all $j\in\Sigma$.

Let $\packf p$ be any compact sphere packing of $\Rd$ with $|\radii(\packf p)|=n$
that is canonically labeled by $\Sigma:=\{0,\ldots,n-1\}$. Fix any
sphere $A\in\packf p$. Since the underlying space of the packing
complex of $\packf p$ is homeomorphic to $\Rd$ through the identity
map, the center of the sphere $A$ is an interior point of the union
of the closed star of center of the sphere $A$ (as a set in $\Rd$
as a topological space). We obtain a labeled triangulation of the
sphere $A$ by centrally projecting the link of the center of $A$
in the packing complex of $\packf p$ onto the surface of the sphere
$A$ and carrying the labels of vertices along. Let $a\in\Sigma$
denote the label attached to the center of the sphere $A$. By scaling
and translating we obtain a labeled spherical triangulation of the
unit sphere of $\Rd$ in $\mathbf{T}$ and we call this the \emph{canonical
    labeled triangulation associated to} \emph{the sphere }$A\in\packf p$.
Let $T_{A}$ be the abstract simplicial complex obtained as the vertex
scheme of the canonical labeled triangulation of the sphere $A$ by
forgetting all geometric information, but retaining all labels on
the vertices. Then $a:T_{A}$ is a $d$-dimensional packing code over
$\Sigma$, and we call \emph{$a:T_{A}$ }the \emph{canonical packing
    code associated to the sphere $A$. }We define $\code(A):=a:T_{A}$
and $\pcodes(\packf p):=\set{\code(A)\in\codes(\Sigma)}{A\in\packf p}.$

\section{Fundamental sets of packing codes\label{sec:Fundamental-sets-of}}

In this section, we define what we call fundamental sets of packing
codes. These structures are always present as a subset of all the
packing codes obtained from a compact sphere packing (cf. Theorem~\ref{thm:packings-determine-fundamental-set}).
In Figure~\ref{fig:pack-five-labelled} we display an example of
a fundamental set determined by a compact packing in two dimensions.

Fundamental sets allow us to firstly show that the combinatorics of
a compact packing associated to a heteroperturbative set uniquely
determine the radii of the spheres occurring in the packing (cf. Section~\ref{sec:Uniqueness}),
and to secondly establish The Bootstrapping Lemma in Section~\ref{sec:The-bootstrapping-lemma}
relating ratios of values of realizers under the condition of existence
of appropriate labeled spherical triangulations from a heteroperturbative
set who are combinatorially equivalent with identical labels to the
codes in the fundamental set.
\begin{defn}
    Let $d,n\in\N$ with $d,n\geq2$ and $\Sigma:=\{0,\ldots,n-1\}$.
    Let $C\subseteq\codes(\Sigma)$. We will say that $C$ is\emph{ a
        fundamental set (of packing codes) }if $\set{c\,}{c:T\in C}=\{0,\ldots,n-2\}$
    and for every non-empty set $K\subseteq\{0,\ldots,n-2\}$ there exists
    a code $c:T\in C$ so that $c\in K$ and there exists a vertex from
    $T$ that has label that is not an element of $K$, i.e., $\codeneighborlabels(T)\backslash K\neq\emptyset$.
\end{defn}

{}
\begin{thm}
    \label{thm:packings-determine-fundamental-set}Let $d,n\in\N$ with
    $d,n\geq2$ and $\Sigma:=\{1,\ldots,n-1\}$. Let $\packf p$ be a
    canonically labeled compact sphere packing in $\Rd$ with $|\radii(\packf p)|=n$.
    The set $C:=\set{c:T\in\pcodes(\packf p)}{c\leq n-2}$ is a fundamental
    set.
\end{thm}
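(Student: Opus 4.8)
The plan is to check the two defining conditions of a fundamental set directly from the combinatorics of the packing complex, using essentially only that its underlying space is connected. Throughout, take $\Sigma=\{0,1,\ldots,n-1\}$ to be the label set of the canonical labeling, so that a sphere of radius $r_{j}$ carries the label $j$, and recall from Section~\ref{subsec:Canonical-labeling-and} that for $A\in\packf p$ with label $a$ one has $\code(A)=a:T_{A}$, where $T_{A}$ is the vertex scheme of the central projection of the link of the center of $A$ in the packing complex, with labels carried along. For the first condition, note that since $|\radii(\packf p)|=n$ every label $j\in\Sigma$ occurs as the center of some packing code in $\pcodes(\packf p)$, so $\{c:c:T\in C\}=\Sigma\cap\{0,\ldots,n-2\}=\{0,\ldots,n-2\}$, as required.

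The content is in the second condition. I would fix a non-empty $K\subseteq\{0,\ldots,n-2\}$ and let $\packf p_{K}\subseteq\packf p$ be the set of spheres whose label lies in $K$. Since every label in $\Sigma$ is realized and $K\neq\emptyset$, the set $\packf p_{K}$ is non-empty; since the label $n-1$ is realized and $n-1\notin K$, the set $\packf p\setminus\packf p_{K}$ is also non-empty. Let $G$ be the graph whose vertices are the centers of the spheres of $\packf p$ and whose edges are the $1$-simplices of the packing complex. The key step is that $G$ is connected: if the vertex set of $G$ split as a disjoint union $V_{1}\sqcup V_{2}$ with no edge between the parts, then no simplex of the packing complex could have vertices in both parts (it would contain a $1$-simplex joining them), so the packing complex would be the disjoint union of the full subcomplexes on $V_{1}$ and on $V_{2}$; passing to underlying spaces would write $\Rd$ as a disjoint union of two non-empty closed sets, which is impossible.

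Having established that $G$ is connected, I would choose a path in $G$ from the center of a sphere in $\packf p_{K}$ to the center of a sphere in $\packf p\setminus\packf p_{K}$, and let $e$ be the first edge along this path that has one endpoint the center of a sphere $A\in\packf p_{K}$ and the other the center of a sphere $B\in\packf p\setminus\packf p_{K}$. Then the label $a$ of $A$ lies in $K\subseteq\{0,\ldots,n-2\}$, so $\code(A)=a:T_{A}$ belongs to $C$ and has $a\in K$. Moreover $e$ is a $1$-simplex of the packing complex containing the center of $A$, so the center of $B$ is a vertex of the link of the center of $A$; as central projection preserves the vertex scheme and carries labels along, the label $b$ of $B$ is an element of $\codeneighborlabels(T_{A})$. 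Since $B\notin\packf p_{K}$ we have $b\notin K$, so $\codeneighborlabels(T_{A})\setminus K\neq\emptyset$. This verifies the second condition for this $K$, and as $K$ was arbitrary, $C$ is a fundamental set.

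The one place that deserves care — and what I would regard as the crux — is the connectedness step: translating ``the underlying space of the packing complex is homeomorphic to $\Rd$'' into ``the edge graph of the packing complex is connected'', and pairing it with the elementary observation that every $1$-simplex of the packing complex incident to the center of $A$ supplies a labeled vertex of $T_{A}$. Once these are in place, the choice of the ``boundary'' edge $e$ does all the remaining work, and no metric or tangency data about $\packf p$ is needed beyond what is already recorded combinatorially in the packing complex.
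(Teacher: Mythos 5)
Your proof is correct and is essentially the paper's argument phrased contrapositively: the paper supposes the condition fails for some $K$, concludes (implicitly via connectedness of the contact graph) that every sphere would then be labeled by $K$, and derives the contradiction $n=|K|<n$, whereas you directly locate a ``boundary'' edge between the $K$-labeled spheres and the rest. The only added content in your write-up is the explicit justification that the edge graph of the packing complex is connected, a step the paper leaves implicit.
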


\begin{proof}
    Since $|\radii(\packf p)|=n$, we have $\set c{c:T\in C}=\{0,\ldots,n-2\}$.
    Suppose there exists a non-empty set $K\subseteq\{0,\ldots,n-2\}$
    so that for all $c:T\in C$ with $c\in K$ we have that $\codeneighborlabels(T)\backslash K=\emptyset$.
    As there exists a sphere labeled by an element from $K$, and every
    sphere in $\packf p$ labeled by an element from $K$ only has neighbors
    labeled by elements from $K$, we conclude that all spheres in $\packf p$
    must be labeled by elements from $K$. But this yields the contradiction
    $n=|\radii(\packf p)|=|K|<n$. We conclude that $C$ is fundamental.
\end{proof}
\begin{figure}
    \centering \includegraphics[width=100mm]{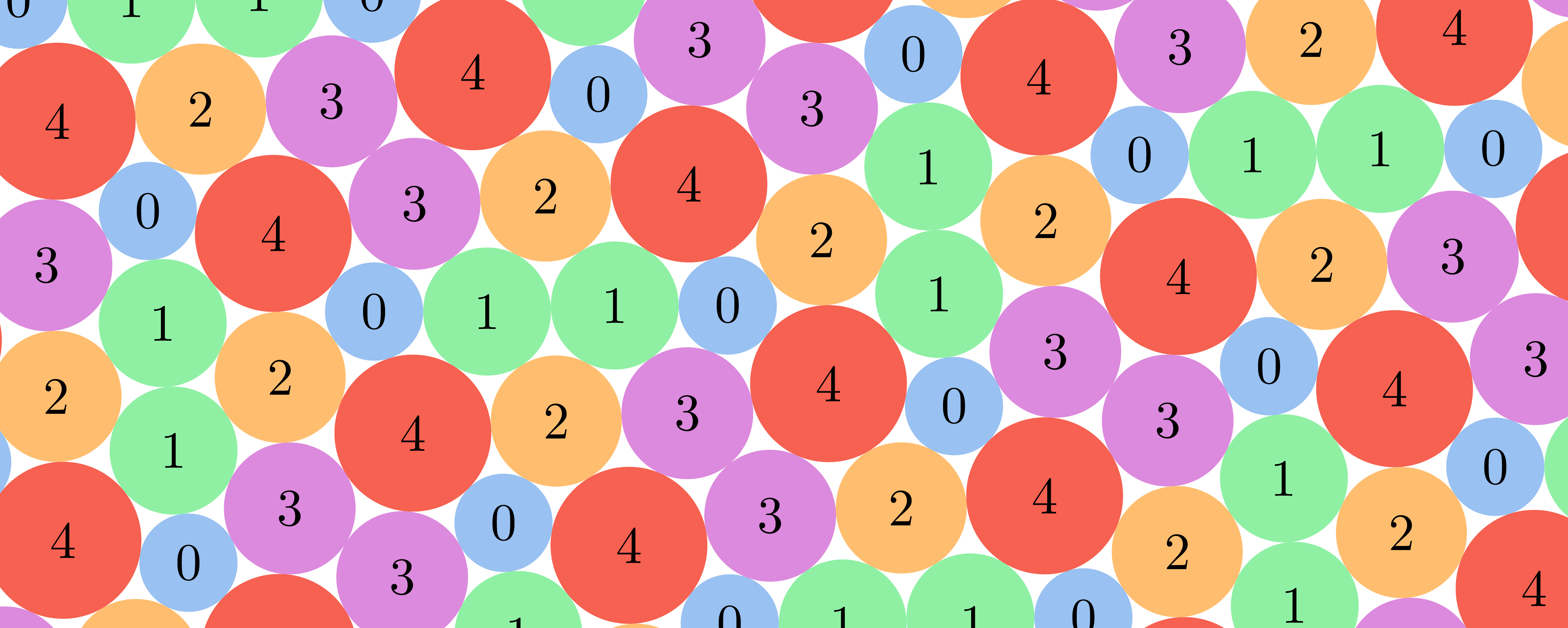}\caption{\label{fig:pack-five-labelled}The figure displays a compact packing
        $\protect\packf p$ of $\protect\R^{2}$ from \cite{fernique2023-five-packing}
        with five sizes of discs. The canonical labeling of the packing is
        overlaid. Packing codes associated to compact disc packings in two
        dimensions have cycles as their neighbor complexes. We can therefore
        succinctly express the neighbor complexes of such codes as just the
        string of labels on the vertices in the order that the vertices occur
        in the cycle. From the figure, we thus read off the set $\protect\pcodes(\protect\packf p)$
        (modulo reflection and rotation) as:
        \[
            \left\{ \protect\begin{array}{c}
                0:43142,\ 1:421230,\protect  \\
                2:431140,\ 3:434210,\protect \\
                4:3320120
                \protect\end{array}\right\} .
        \]
        The conclusion of Theorem~\ref{thm:packings-determine-fundamental-set}
        is that $\protect\set{c:T\in\protect\pcodes(\protect\packf p)}{c\protect\leq3}$
        is a fundamental set. This can be easily verified directly.}
\end{figure}

\section{Uniqueness of radii in compact packings associated to a heteroperturbative
  set\label{sec:Uniqueness}}

In this section our goal is to prove Theorem~\ref{thm:canonical-realizers-are-unique-for-packings}
that shows, for some fixed heteroperturbative set $\mathbf{S}$, that
the combinatorics of a compact packing associated to $\mathbf{S}$
uniquely determines the radii of the spheres in the packing.

The following lemma should be seen as a generalization of the result
\cite[Theorem~4.2]{Messerschmidt2d} to higher dimensions. The argument
is essentially the same, proceeding through application of Lemma~\ref{lem:realize_angle_symbol_strict_monotone}
and leveraging the defining property of heteroperturbative sets.
\begin{lem}
    \label{lem:uniqueness-lemma}Let $d,n\in\N$ with $d,n\geq2$ and
    $\Sigma:=\{0,\ldots,n-1\}$. Let $\mathbf{S}$ be a heteroperturbative
    set of labeled spherical triangulations of unit spheres. Let $C\subseteq\codes(\Sigma)$
    be a fundamental set. Let $\rho,\sigma\in\{\tau:\Sigma\to(0,\infty):\tau(n-1)=1\}$
    and assume, for all $c:T_{c}\in C$, that there exist spherical triangulations
    $P_{c},Q_{c}\in\mathbf{S}$ that are both combinatorially equivalent
    to $T_{c}$ with identical labels and are such that, for every edge
    $vw$ in $T_{c}$, the corresponding edges of the triangulations $P_{c}$
    and $Q_{c}$ respectively have geodesic lengths $c_{\labelof w}^{\labelof v}|_{\rho}$
    and $c_{\labelof w}^{\labelof v}|_{\sigma}$. Then $\rho=\sigma$.
\end{lem}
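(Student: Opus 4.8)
The plan is to argue by contradiction: suppose $\rho \neq \sigma$. The key structural idea is to define a ``difference function'' $\nu := \rho - \sigma$ (more precisely, we should work with something like $\nu$ being the positive-part deviation so as to stay in the regime where Lemma~\ref{lem:realize_angle_symbol_strict_monotone} applies), and let $K$ be the set of indices where $\rho$ and $\sigma$ disagree, or perhaps where one strictly exceeds the other. Since $\rho(n-1) = \sigma(n-1) = 1$, we have $n-1 \notin K$, so $K \subseteq \{0,\ldots,n-2\}$, and if $\rho\neq\sigma$ then $K$ is non-empty. The fundamental-set hypothesis then hands us a code $c:T_c \in C$ with $c \in K$ and some vertex of $T_c$ carrying a label not in $K$.

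Next I would track what happens along a homotopy from $\sigma$ to $\rho$ within the realizers, using the triangulations $P_c$ and $Q_c$ from $\mathbf{S}$: since $P_c$ and $Q_c$ are combinatorially equivalent with identical labels to $T_c$ and their edge lengths are realized by $\rho$ and $\sigma$ respectively, either $P_c$ is edge-isometric to $Q_c$ (in which case all the angle symbols $c_{\labelof w}^{\labelof v}$ are realized equally by $\rho$ and $\sigma$ for edges $vw$ of $T_c$), or the heteroperturbative property forces one edge of $P_c$ to be strictly longer and another strictly shorter than the corresponding edges of $Q_c$. The plan is to show this dichotomy is incompatible with the choice of $K$ and $c$. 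The mechanism is Lemma~\ref{lem:realize_angle_symbol_strict_monotone}: when we move the realizer only in coordinates lying in $K$ while keeping the $c$-coordinate fixed (which we can arrange if we choose $K$ to be, say, the set where $\sigma < \rho$, pick $c$ maximizing or minimizing something appropriate over $K$, and split the argument to keep the vertex label of the relevant angle symbol out of $K$), part~(2) of that lemma says the realized angle symbol $c_{\labelof w}^{\labelof v}$ is \emph{strictly monotone}, and part~(3) says it is \emph{constant} when all three labels avoid $K$. So every edge of $T_c$ has its realized length either strictly increasing, or constant, along the homotopy — but \emph{never decreasing} — which contradicts the existence of a strictly shorter edge demanded by heteroperturbativity, unless $P_c$ and $Q_c$ are edge-isometric; and in the latter case one shows the angle symbol attached to the ``witness'' edge (one endpoint in $K$, vertex of the symbol equal to $c \in K$, other endpoint outside $K$) is realized differently by $\rho$ and $\sigma$ by strict monotonicity, contradicting edge-isometry.

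The precise bookkeeping is where care is needed. I expect the main obstacle to be setting up $K$ and selecting the code $c$ so that the angle symbols appearing as edge lengths of $T_c$ each have their \emph{vertex} label either equal to the fixed center $c$ or outside $K$, so that one of parts (2) or (3) of Lemma~\ref{lem:realize_angle_symbol_strict_monotone} applies cleanly and uniformly and gives a one-directional (non-decreasing) conclusion along the homotopy. A clean way to do this: let $K := \{ j \in \Sigma : \sigma(j) < \rho(j)\}$; if this is empty swap $\rho$ and $\sigma$; then $K \subseteq \{0,\dots,n-2\}$ is non-empty, so the fundamental-set condition yields $c:T_c\in C$ with $c\in K$ and a vertex of $T_c$ labelled outside $K$. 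Set $\nu := (\rho-\sigma)\cdot\mathbf{1}_K \geq 0$, a non-zero function, and consider $t\mapsto \sigma + t\nu$ for $t\in[0,1]$, noting $\sigma+\nu$ and $\rho$ agree on $K$ but need not agree off $K$ — so we should instead homotope $\rho$ to $\sigma$ by moving only the $K$-coordinates and argue that the off-$K$ coordinates are irrelevant to the edge-length comparisons via part~(3), or alternatively apply the lemma twice (once for each of two homotopy segments). For an edge $vw$ of $T_c$: since the center label is $c\in K$, we have $\nu(c) = \rho(c)-\sigma(c) > 0$ is \emph{not} zero — so part~(2)'s hypothesis $\nu(c)=0$ fails. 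This signals that one must instead center the monotonicity analysis on a \emph{different} angle symbol, namely one whose vertex is a neighbor of $c$ rather than $c$ itself, or equivalently exploit that the triangulation $P_c$ sees, at each of its vertices, an angle symbol whose vertex label is that vertex's label. I would reconcile this by working at the vertex of $T_c$ (equivalently of $P_c$, $Q_c$) whose label lies outside $K$: at that vertex, the vertex-label of the relevant angle symbols is outside $K$ so $\nu$ vanishes there, part~(2) applies to the edge joining it to a $K$-labelled neighbor (giving strict monotonicity in one direction), and part~(3) applies wherever all three labels avoid $K$; summing the angular contributions around appropriate vertices and invoking that a spherical triangulation's vertex figures have fixed total angle (or, more robustly, invoking heteroperturbativity directly) produces the contradiction. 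The final write-up must choose exactly one of these routes and carry it through; I anticipate that pinning down this vertex-selection and the direction of monotonicity is the only genuinely delicate point, the rest being a direct appeal to Lemma~\ref{lem:realize_angle_symbol_strict_monotone} and the definition of a heteroperturbative set.
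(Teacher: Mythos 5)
Your overall strategy (argue by contradiction, use the fundamental-set property to extract a code $k:T_{k}$ whose center lies in the ``bad'' index set but which has a neighbour labelled outside it, then play Lemma~\ref{lem:realize_angle_symbol_strict_monotone} against heteroperturbativity) matches the paper's, but the proposal stalls at exactly the point you flag, and neither workaround you sketch closes it. The obstruction is real: every angle symbol the hypothesis controls has the code's center $c$ as its \emph{vertex}, since the edge lengths of $P_{c}$ and $Q_{c}$ are $c_{\labelof w}^{\labelof v}|_{\rho}$ and $c_{\labelof w}^{\labelof v}|_{\sigma}$, and the fundamental-set selection forces $c\in K$, so your additive perturbation $\nu=(\rho-\sigma)\cdot\mathbf{1}_{K}$ has $\nu(c)>0$ and part~(2) of Lemma~\ref{lem:realize_angle_symbol_strict_monotone} is unusable. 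Your first fix --- recentering on angle symbols whose vertex is a neighbour of $c$ lying outside $K$ --- compares quantities about which the hypothesis says nothing (only symbols with vertex $c$ appear as edge lengths of $P_{c}$ and $Q_{c}$). Your second fix --- summing angular contributions around a vertex to a fixed total --- is a two-dimensional fact with no analogue for triangulations of higher-dimensional spheres; heteroperturbativity is precisely its replacement, and it can only be invoked once you already have a one-sided comparison of \emph{all} corresponding edges with at least one strict inequality.

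The missing idea is a multiplicative, not additive, normalization, exploiting part~(1) of Lemma~\ref{lem:realize_angle_symbol_strict_monotone} (scale invariance of realized angle symbols). Set $t_{0}:=\sup\{t>0:\forall j\in\Sigma,\ t\sigma(j)<\rho(j)\}$ and $J:=\{j\in\Sigma:t_{0}\sigma(j)=\rho(j)\}$; then $J\neq\emptyset$ and (swapping the roles of $\rho$ and $\sigma$ if necessary so that $t_{0}<1$) one has $n-1\notin J$. Now $\nu:=\rho-t_{0}\sigma\geq0$ vanishes \emph{exactly} on $J$, so for the code $k:T_{k}$ with $k\in J$ and a vertex $p$ satisfying $\labelof p\notin J$ one has $\nu(k)=0$, whence
\[
k_{\labelof w}^{\labelof v}|_{\sigma}=k_{\labelof w}^{\labelof v}|_{t_{0}\sigma}\leq k_{\labelof w}^{\labelof v}|_{t_{0}\sigma+\nu}=k_{\labelof w}^{\labelof v}|_{\rho}
\]
for every edge $vw$ of $T_{k}$, strictly for any edge containing $p$. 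Thus no edge of $P_{k}$ is shorter than the corresponding edge of $Q_{k}$ while some edge is strictly longer, contradicting heteroperturbativity of $[P_{k}]\cap\mathbf{S}$. Without the rescaling by $t_{0}$ there is no way to make the perturbation vanish at the vertex $c$ of the relevant angle symbols, so as written the proposal has a genuine gap.
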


\begin{proof}
    We proceed by contradiction: Suppose that $\rho\neq\sigma$. Define
    $t_{0}:=\sup\{t>0:\forall j\in\Sigma,\ t\sigma(j)<\rho(j)\}$ and
    $J:=\{j\in\Sigma:t_{0}\sigma(j)=\rho(j)\}$. Note that the set $J$
    is not empty, else $t_{0}$ cannot be the supremum of the set $\{t>0:\forall j\in\Sigma,\ t\sigma(j)<\rho(j)\}.$
    Further, since $\rho(n-1)=\sigma(n-1)=1$, we have that $n-1\notin J$.
    Since $C$ is fundamental, there exists a code $k:T_{k}\in C$ with
    $k\in J$ and some $p\in\vertexset(T_{k})$ so that $\labelof p\notin J$.
    Define $\nu:=\rho-t_{0}\sigma$. The map $\nu$ takes on non-negative
    values and the support of $\nu$ is exactly $\Sigma\backslash J$.
    By Lemma~\ref{lem:realize_angle_symbol_strict_monotone}, for every
    edge $vw$ of $T_{k}$,
    \[
        k_{\labelof w}^{\labelof v}|_{\sigma}=k_{\labelof w}^{\labelof v}|_{t_{0}\sigma}=k_{\labelof w}^{\labelof v}|_{t_{0}\sigma+0\nu}\leq k_{\labelof w}^{\labelof v}|_{t_{0}\sigma+1\nu}=k_{\labelof w}^{\labelof v}|_{\rho}.
    \]
    Therefore no edge of $P_{k}$ is strictly shorter that the corresponding
    edge of $Q_{k}$. Furthermore, since $\labelof p\notin J$, again
    by Lemma~\ref{lem:realize_angle_symbol_strict_monotone}, at least
    one of the above inequalities is strict, implying that an edge of
    $P_{k}$ is strictly longer than the corresponding edge of $Q_{k}$.
    Hence $P_{k}$ and $Q_{k}$ are not edge-isometric, while living in
    the same equivalence class in the heteroperturbative set $\mathbf{S}$.
    This implies the existence of an edge in $P_{k}$ that is strictly
    shorter that the corresponding edge of $Q_{k}$, contrary to our earlier
    remark that this is not the case. Consequently the supposition $\rho\neq\sigma$
    is false, and we conclude that $\rho=\sigma$.
\end{proof}
\begin{lem}
    \label{lem:packing-determines-triangulations-from-Q}Let $d,n\in\N$
    with $d,n\geq2$ and define $\Sigma:=\{0,\ldots,n-1\}$. Let $\packf p$
    be a canonically labeled compact sphere packing in $\Rd$ with $|\radii(\packf p)|=n$
    and normalized so that $\max\radii(\packf{p\packf )}=1$. Let $\rho:\Sigma\to(0,1]$
    denote the canonical realizer of the packing $\packf p$. The canonical
    labeled triangulation associated to any sphere in $\packf p$ (by
    using the canonical realizer $\rho$) is an element of the set $\mathbf{Q}$,
    as in Definition~\ref{def:define-Q}.
\end{lem}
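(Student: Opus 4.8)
The plan is to verify the two defining conditions of Definition~\ref{def:define-Q} directly for the canonical labeled triangulation associated to an arbitrary sphere $A\in\packf p$, using the canonical realizer $\rho$ as the witness map. First I would fix notation: let $A\in\packf p$ be a sphere of radius $r_a=\rho(a)$, where $a\in\Sigma$ is the canonical label of the center of $A$. The canonical labeled triangulation $P_A\in\mathbf{T}$ is obtained by centrally projecting the link of the center of $A$ in the packing complex onto the surface of $A$, carrying labels along, and then scaling and translating so that the sphere becomes the unit sphere of $\Rd$. The map $\rho$ is monotone (it is the canonical realizer, so $\rho(j)=r_j$ with $0<r_0<\cdots<r_{n-1}=1$), which is the first requirement of Definition~\ref{def:define-Q}, and $\rho(a)$ plays the role of the radius of the central sphere after the identification $P_A$ with the unit sphere.

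The key geometric observation, to be stated and used, is the following: if $v$ and $w$ are two vertices of $P_A$, they correspond (before scaling) to points of tangency on the sphere $A$ with two neighboring spheres $B_v$ and $B_w$ of respective radii $\rho(\labelof v)$ and $\rho(\labelof w)$. The geodesic distance on the surface of $A$ between these two tangency points equals $r_a$ times the angle subtended at the center of $A$ by the two tangency points; after scaling $A$ to the unit sphere, the geodesic distance between $v$ and $w$ in $P_A$ is exactly this subtended angle $\theta$. Now $B_v$ and $B_w$ have disjoint interiors (they belong to the packing $\packf p$), so the distance between their centers is at least $\rho(\labelof v)+\rho(\labelof w)$; and the triangle formed by the center of $A$ and the centers of $B_v, B_w$ has side lengths $r_a+\rho(\labelof v)$, $r_a+\rho(\labelof w)$, with the angle $\theta$ at the center of $A$ opposite the third side, whose length is $\geq \rho(\labelof v)+\rho(\labelof w)$. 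The law of cosines then gives
\[
\cos\theta \;=\; \frac{(r_a+\rho(\labelof v))^2 + (r_a+\rho(\labelof w))^2 - (\text{third side})^2}{2(r_a+\rho(\labelof v))(r_a+\rho(\labelof w))} \;\le\; \frac{(r_a+\rho(\labelof v))^2 + (r_a+\rho(\labelof w))^2 - (\rho(\labelof v)+\rho(\labelof w))^2}{2(r_a+\rho(\labelof v))(r_a+\rho(\labelof w))},
\]
so $\theta \geq a^{\labelof v}_{\labelof w}|_\rho$ by the definition of the angle symbol, since $\arccos$ is decreasing. This establishes condition (1). For condition (2), when $vw$ is an \emph{edge} of $P_A$, the corresponding spheres $B_v$ and $B_w$ are actually tangent in $\packf p$ (this is part of the definition of the contact hypergraph / packing complex), so the third side length is exactly $\rho(\labelof v)+\rho(\labelof w)$ and the inequality above becomes an equality: $\theta = a^{\labelof v}_{\labelof w}|_\rho$.

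The main obstacle — really the only subtlety — is making the geometric bookkeeping fully rigorous: namely, checking that central projection of the link onto the sphere $A$ carries geodesic distance on $A$ to exactly the angle subtended at the center, that the vertices of the link genuinely correspond to tangency points with genuine neighboring spheres (which follows from how the packing complex is built from the contact hypergraph, whose vertices are sphere centers and whose edges are tangency relations among spheres with disjoint interiors), and that the scaling/translation to the unit sphere does not distort geodesic \emph{angles} (it does not, since it is a similarity). I would also remark that for an edge $vw$, the adjacency of the centers of $A$, $B_v$, $B_w$ as a $2$-simplex in the packing complex forces pairwise tangency, which is exactly what the contact hypergraph condition encodes. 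Once this dictionary between the combinatorial/topological data of the packing complex and the metric data on the sphere $A$ is in place, conditions (1) and (2) follow immediately from the law of cosines computation above, and hence $P_A\in\mathbf{Q}$.
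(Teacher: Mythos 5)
Your proposal is correct and follows exactly the argument the paper intends: its own proof is a one-line appeal to ``construction as discussed'' after Definition~\ref{def:define-Q}, and the content of that construction is precisely your law-of-cosines computation showing that disjoint interiors give the inequality of condition (1) and tangency along edges of the link gives the equality of condition (2). You have simply written out in full the details the paper leaves implicit.
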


\begin{proof}
    This follows from construction as discussed in Section~\ref{subsec:spherical-triangulations-and-heteroperturbative-sets}.
\end{proof}
\begin{thm}
    \label{thm:canonical-realizers-are-unique-for-packings}Let $d,n\in\N$
    with $d,n\geq2$ and define $\Sigma:=\{0,\ldots,n-1\}$. Let $\mathbf{S}$
    be a heteroperturbative set of labeled spherical triangulations of
    unit spheres. Let $\packf p$ be a canonically labeled compact sphere
    packing in $\Rd$ with $|\radii(\packf p)|=n$, normalized so that
    $\max\radii(\packf{p\packf )}=1$, and so that the canonical labeled
    spherical triangulation associated to every sphere from $\packf p$
    is an element of $\mathbf{S}$. Let $C\subseteq\pcodes(\packf p)$
    be a fundamental set. The canonical realizer $\rho:\Sigma\to(0,1]$
    of the packing $\packf p$ is the unique map in $\{\tau:\Sigma\to(0,\infty):\tau(n-1)=1\}$
    so that, for every $c:T_{c}\in C$, there exists a spherical triangulation
    $P_{c}\in\mathbf{S}$ that is combinatorially equivalent to $T_{c}$
    with identical labels and is such that for every edge $vw$ of $T_{c}$
    the corresponding edge in $P_{c}$ has length $c_{\labelof w}^{\labelof v}|_{\rho}$.
\end{thm}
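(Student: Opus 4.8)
The plan is to treat this theorem as essentially a corollary of Lemma~\ref{lem:uniqueness-lemma}, with the existence half coming for free from the construction of canonical objects in Section~\ref{subsec:Canonical-labeling-and}. The statement has two parts --- that the canonical realizer $\rho$ \emph{has} the stated interpolation property, and that it is the \emph{unique} map in $\{\tau:\Sigma\to(0,\infty):\tau(n-1)=1\}$ with that property --- and I would address them in that order.

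For existence, I would first note that since $\packf p$ is normalized we have $\rho(n-1)=r_{n-1}=1$, so $\rho$ is a legitimate candidate. Then, given $c:T_c\in C\subseteq\pcodes(\packf p)$, I would pick a sphere $A\in\packf p$ with $\code(A)=c:T_c$ and take $P_c$ to be the canonical labeled spherical triangulation associated to $A$. By hypothesis $P_c\in\mathbf{S}$; by the definition of $\code(A)$, the complex $T_c=T_A$ is precisely the vertex scheme of $P_c$ with labels retained, so $P_c$ is combinatorially equivalent to $T_c$ with identical labels and the center of $P_c$ carries the label $c$. Finally Lemma~\ref{lem:packing-determines-triangulations-from-Q} places $P_c$ in $\mathbf{Q}$ with the canonical realizer $\rho$ (extended monotonically to $\N\cup\{0\}$ if one insists) as a witnessing realizer, so condition~(2) of Definition~\ref{def:define-Q} gives that every edge $vw$ of $P_c$ has geodesic length exactly $c_{\labelof w}^{\labelof v}|_{\rho}$. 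This is the required property.

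For uniqueness, I would suppose $\sigma\in\{\tau:\Sigma\to(0,\infty):\tau(n-1)=1\}$ also enjoys the property, witnessed for each $c:T_c\in C$ by some $Q_c\in\mathbf{S}$ that is combinatorially equivalent to $T_c$ with identical labels and whose every edge $vw$ has geodesic length $c_{\labelof w}^{\labelof v}|_{\sigma}$. Pairing these $Q_c$ with the triangulations $P_c$ produced in the existence step, all hypotheses of Lemma~\ref{lem:uniqueness-lemma} are in place: $C$ is a fundamental set, $\mathbf{S}$ is heteroperturbative, $\rho$ and $\sigma$ both send $n-1$ to $1$, and for every $c:T_c\in C$ there are $P_c,Q_c\in\mathbf{S}$, combinatorially equivalent to $T_c$ with identical labels, realizing the edge lengths $c_{\labelof w}^{\labelof v}|_{\rho}$ and $c_{\labelof w}^{\labelof v}|_{\sigma}$ respectively. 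Lemma~\ref{lem:uniqueness-lemma} then yields $\rho=\sigma$, finishing the argument.

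I do not expect a serious obstacle here: the analytic heart --- the monotonicity of realized angle symbols in Lemma~\ref{lem:realize_angle_symbol_strict_monotone}, combined with the defining property of heteroperturbative sets --- is already packaged inside Lemma~\ref{lem:uniqueness-lemma}. The only things to check carefully are bookkeeping items: that the canonical triangulation of a sphere is combinatorially equivalent \emph{with identical labels} to the packing code it determines (immediate from how $\code(A)$ is defined); that the center label of $P_c$ is the same symbol $c$ appearing in the angle symbols $c_{\labelof w}^{\labelof v}$, so that condition~(2) of Definition~\ref{def:define-Q} asserts precisely what is needed; and that, since a fundamental set contains no code with center label $n-1$, Lemma~\ref{lem:uniqueness-lemma} is only ever invoked over codes whose centers lie in $\{0,\ldots,n-2\}$, consistent with its proof.
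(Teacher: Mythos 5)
Your proposal is correct and follows essentially the same route as the paper: existence of the interpolating triangulations comes from the canonical labeled triangulations of spheres in $\packf p$ (via Lemma~\ref{lem:packing-determines-triangulations-from-Q} and the hypothesis that these lie in $\mathbf{S}$), and uniqueness is a direct application of Lemma~\ref{lem:uniqueness-lemma}. The extra bookkeeping you flag (identical labels, center label matching, $\rho(n-1)=1$) is handled implicitly in the paper but is exactly the right set of things to check.
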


\begin{proof}
    By Lemma~\ref{lem:packing-determines-triangulations-from-Q}, the
    canonical realizer $\rho$ of $\packf p$ is such that for every $c:T_{c}\in C$,
    there exists a spherical triangulation $P_{c}\in\mathbf{G}$ (the
    canonical labeled spherical triangulation of some sphere from $\packf p$)
    that is combinatorially equivalent to $T_{c}$ with identical labels
    and is such that, for every edge $vw$ of $T_{c},$ the corresponding
    edge in $P_{c}$ has length $c_{\labelof w}^{\labelof v}|_{\rho}$.
    But the canonical labeled spherical triangulation of all spheres in
    $\packf p$ are all assumed to be elements of $\mathbf{S}$. Therefore
    the canonical realizer $\rho$ satisfies the stated condition.

    By Lemma~\ref{lem:uniqueness-lemma} the canonical realizer $\rho$
    is the only map in $\{\tau:\Sigma\to(0,\infty):\tau(n-1)=1\}$ satisfying
    this condition.
\end{proof}

\section{The bootstrapping lemma\label{sec:The-bootstrapping-lemma}}

This section sees the proof of The Bootstrapping Lemma (Lemma~\ref{lem:bootstrapping-lemma2-1}).
This result is a crucial ingredient in the induction step of the strong
induction performed in proving the main technical result of this paper
(Lemma~\ref{lem:essential-sets-finite-induction-step}). Lemma~\ref{lem:bootstrapping-lemma2-1}
is analogous to \cite[Lemma~5.1]{Messerschmidt2d}.

We introduce the following admittedly tortuously abused notation for
which the authors apologize. The reason for its introduction is to
be able to succinctly express how realizers relate to labeled spherical
triangulations of the unit sphere in this section and in the subsequent
section.
\begin{defn}
    \label{def:abused-notation}Let $\Sigma$ be any set of symbols and
    let $c:T_{c}\in\codes(\Sigma)$. Let $P\in\mathbf{T}$ be any labeled
    spherical triangulation of the unit sphere and let $\rho:\Sigma\to(0,\infty)$
    be any map.

    By the notation
    \[
        \rho\underleq{c:T_{c}}P
    \]
    we mean the following: Firstly, $P$ is combinatorially equivalent
    to $T_{c}$ with identical labelings, and secondly, for all \textbf{pairs
        of distinct vertices} $v,w$ of $T_{c}$ (not necessarily connected
    by an edge), the geodesic distance in the unit sphere between the
    vertices corresponding to $v$ and $w$ in $P$ is at least $c_{\labelof w}^{\labelof v}|_{\rho}$.

    By the notation
    \[
        \rho\underleqtop{c:T_{c}}P
    \]
    we mean the following: Firstly, $P$ is combinatorially equivalent
    to $T_{c}$ with identical labelings, and secondly, for all \textbf{edges}
    $vw$ of $T_{c}$, the geodesic length in the unit sphere of the corresponding
    edge in $P$ is least $c_{\labelof w}^{\labelof v}|_{\rho}$.

    The meaning of the notation
    \[
        P\underleq{c:T_{c}}\rho\qquad\text{and}\qquad P\underleqtop{c:T_{c}}\rho
    \]
    is defined to be the same as above, but with the words `at least'
    are replaced with the words `at most'.
\end{defn}

\begin{lem}[The Bootstrapping Lemma]
    \label{lem:bootstrapping-lemma2-1}Let $d,n\in\N$ with $d,n\geq2$
    and $\Sigma:=\{0,\ldots,n-1\}$. Let $\mathbf{S}$ be a heteroperturbative
    set of labeled spherical triangulations of unit spheres. Let $C\subseteq\codes(\Sigma)$
    be a fundamental set. If $\rho,\sigma:\Sigma\to(0,\infty)$ are such
    that, for all $c:T_{c}\in C$ there exist triangulations $P_{c},Q_{c}\in\mathbf{S}$
    so that $\rho\underleqtop{c:T_{c}}P_{c}$ and $Q_{c}\underleqtop{c:T_{c}}\sigma$,
    as in Definition~\ref{def:abused-notation}, then
    \[
        \frac{\sigma(n-2)}{\sigma(n-1)}\leq\frac{\rho(n-2)}{\rho(n-1)}.
    \]
\end{lem}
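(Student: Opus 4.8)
The plan is to argue by contradiction, closely paralleling the proof of Lemma~\ref{lem:uniqueness-lemma} (and its two-dimensional ancestor \cite[Lemma~5.1]{Messerschmidt2d}). Suppose $\frac{\sigma(n-2)}{\sigma(n-1)}>\frac{\rho(n-2)}{\rho(n-1)}$, equivalently $\frac{\rho(n-2)}{\sigma(n-2)}<\frac{\rho(n-1)}{\sigma(n-1)}$. Set
\[
t_{0}:=\sup\{t>0:\forall j\in\Sigma,\ t\sigma(j)<\rho(j)\}=\min_{j\in\Sigma}\tfrac{\rho(j)}{\sigma(j)}\in(0,\infty),\qquad J:=\{j\in\Sigma:t_{0}\sigma(j)=\rho(j)\}.
\]
Then $J$, the set of minimizers of $j\mapsto\rho(j)/\sigma(j)$, is non-empty, and the assumed inequality forces $\frac{\rho(n-1)}{\sigma(n-1)}>t_{0}$, so $n-1\notin J$ and hence $J\subseteq\{0,\dots,n-2\}$. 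Put $\nu:=\rho-t_{0}\sigma$; then $\nu\geq0$ pointwise, $\nu$ is supported exactly on $\Sigma\setminus J$, and $\nu(n-1)>0$, so $\nu\neq0$.

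Next I would locate a suitable code. Since $C$ is a fundamental set and $J$ is a non-empty subset of $\{0,\dots,n-2\}$, there is a code $k:T_{k}\in C$ with $k\in J$ and a vertex $p$ of $T_{k}$ with $\labelof p\notin J$. The point is that $k$ is the vertex of every angle symbol $k_{\labelof w}^{\labelof v}$ attached to an edge $vw$ of $T_{k}$, and $\nu(k)=0$ because $k\in J$. Hence, combining scale-invariance (part~(1) of Lemma~\ref{lem:realize_angle_symbol_strict_monotone}, giving $k_{\labelof w}^{\labelof v}|_{\sigma}=k_{\labelof w}^{\labelof v}|_{t_{0}\sigma}$) with the monotonicity in parts~(2)--(3), for every edge $vw$ of $T_{k}$,
\[
k_{\labelof w}^{\labelof v}|_{\sigma}=k_{\labelof w}^{\labelof v}|_{t_{0}\sigma+0\cdot\nu}\ \leq\ k_{\labelof w}^{\labelof v}|_{t_{0}\sigma+1\cdot\nu}=k_{\labelof w}^{\labelof v}|_{\rho},
\]
with the inequality strict whenever $\labelof v\notin J$ or $\labelof w\notin J$. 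Since $T_{k}$ is a homogeneous $(d-1)$-complex and $d-1\geq1$, the vertex $p$ lies in some edge $pw$ of $T_{k}$, and as $\labelof p\notin J$ the inequality is strict for $pw$.

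Finally I would invoke the hypotheses $\rho\underleqtop{k:T_{k}}P_{k}$ and $Q_{k}\underleqtop{k:T_{k}}\sigma$ together with heteroperturbativity. For every edge $vw$ of $T_{k}$, the corresponding edge of $P_{k}$ has geodesic length at least $k_{\labelof w}^{\labelof v}|_{\rho}\geq k_{\labelof w}^{\labelof v}|_{\sigma}$, which is at least the length of the corresponding edge of $Q_{k}$; so no edge of $P_{k}$ is strictly shorter than the corresponding edge of $Q_{k}$. For $pw$, however, the first of these inequalities is strict, so that edge of $P_{k}$ is strictly longer than the corresponding edge of $Q_{k}$, and in particular $P_{k}$ and $Q_{k}$ are not edge-isometric. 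But $P_{k},Q_{k}\in\mathbf{S}$ are combinatorially equivalent to $T_{k}$ with identical labels, hence lie in a common equivalence class of the heteroperturbative set $\mathbf{S}$; its defining property then produces an edge of $P_{k}$ strictly \emph{shorter} than the corresponding edge of $Q_{k}$, a contradiction. Therefore $\frac{\sigma(n-2)}{\sigma(n-1)}\leq\frac{\rho(n-2)}{\rho(n-1)}$.

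I do not expect a serious obstacle: the argument is a dimension-agnostic recasting of the two-dimensional bootstrap, with the heteroperturbative dichotomy playing exactly the role it plays in Lemma~\ref{lem:uniqueness-lemma}. The only points needing care are (i) choosing $t_{0}$ so that $n-1\notin J$ (which is where the hypothesis on the $(n-2)$ and $(n-1)$ coordinates is used), and (ii) the application of Lemma~\ref{lem:realize_angle_symbol_strict_monotone} to edges whose endpoints share a label or carry the center's label $k$; the latter is harmless, since one may place a non-vertex endpoint label in the superscript slot, and an edge joining two vertices both labelled $k$ simply gives the constant value $\arccos\tfrac12$, so the required non-decreasing/strictly-increasing behaviour still holds whenever $\nu(k)=0$.
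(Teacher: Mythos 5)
Your proof is correct and is essentially the paper's own argument: the paper first disposes of the proportional case $\rho=t\sigma$, then shows directly that $n-1\in J$ by deriving the same fundamentality/monotonicity/heteroperturbativity contradiction from $n-1\notin J$, and reads off the ratio inequality from $t_0\sigma(n-1)=\rho(n-1)$ and $t_0\sigma(n-2)\leq\rho(n-2)$; your version merely repackages this as a contradiction from the negated conclusion, which automatically subsumes the proportional case. The auxiliary points you flag (existence of an edge containing $p$, and edges whose endpoint labels coincide with the center label $k$) are handled consistently with Lemma~\ref{lem:realize_angle_symbol_strict_monotone}.
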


\begin{proof}
    Assume $\rho,\sigma:\Sigma\to(0,\infty)$ are such that, for all $c:T_{c}\in C$
    there exist triangulations $P_{c},Q_{c}\in\mathbf{S}$ so that $\rho\underleqtop{c:T_{c}}P_{c}$
    and $Q_{c}\underleqtop{c:T_{c}}\sigma$.

    If there exists some $t\in(0,\infty)$ so that $\rho=t\sigma$, then
    we immediately obtain
    \[
        \frac{\sigma(n-2)}{\sigma(n-1)}=\frac{t\sigma(n-2)}{t\sigma(n-1)}=\frac{\rho(n-2)}{\rho(n-1)}.
    \]
    We hence assume, for all $t\in(0,\infty)$ that $\rho\neq t\sigma$.

    Define $t_{0}:=\sup\{t\in(0,\infty):\forall j\in\Sigma,\ t\sigma(j)<\rho(j)\}>0$
    and
    \[
        J:=\{j\in\Sigma:t_{0}\sigma(j)=\rho(j)\}.
    \]
    We note that $J$ is not empty, otherwise $t_{0}$ cannot be the
    supremum of the set $\{t\in(0,\infty):\forall j\in\Sigma,\ t\sigma(j)<\rho(j)\}$.
    Also, since for all $t\in(0,\infty)$ we have $\rho\neq t\sigma$,
    we have that $J$ is a proper subset of $\Sigma$.

    There are two cases: Either $n-1\in J$ or $n-1\notin J$. We show
    that we must have $n-1\in J$ through obtaining a contradiction in
    the other case.

    Suppose that $n-1\notin J$, hence $J\subseteq\{0,\ldots,n-2\}$.
    Since $C$ is a fundamental set, there exists some $k:T_{k}\in C$
    that has some vertex $p$ in $T_{k}$ with $\labelof p\notin J$.
    Define $\nu:=\rho-t_{0}\sigma$, which can be seen to be non-negative
    and have support exactly equal to $\Sigma\backslash J$. By Lemma~\ref{lem:realize_angle_symbol_strict_monotone},
    for all edges $vw$ in $T_{k}$, we have
    \[
        k_{\lambda w}^{\lambda v}|_{\sigma}=k_{\lambda w}^{\lambda v}|_{t_{0}\sigma}=k_{\lambda w}^{\lambda v}|_{t_{0}\sigma+0\nu}\leq k_{\lambda w}^{\lambda v}|_{t_{0}\sigma+1\nu}=k_{\lambda w}^{\lambda v}|_{\rho}.
    \]
    Since $\rho\underleqtop{c:T_{c}}P_{k}$ and $Q_{k}\underleqtop{c:T_{c}}\sigma$,
    by the above chain of inequalities, all edges of $P_{k}$ are longer
    or equal in length to the corresponding edges in $Q_{k}$. Since $\labelof p\notin J$,
    again by Lemma~\ref{lem:realize_angle_symbol_strict_monotone}, at
    least one of the above inequalities is strict. Therefore $P_{k}$
    and $Q_{k}$ are not edge-isometric, but belong to the same equivalence
    class in the heteroperturbative set $\mathbf{S}$, and therefore there
    exists an edge of $P_{k}$ that is strictly shorter than the corresponding
    edge of $Q_{k}$. This is in contradiction with the earlier remark
    that all edges of $P_{k}$ are longer or equal in length to the corresponding
    edges in $Q_{k}$. We conclude that we must have $n-1\in J$.

    With $n-1\in J$, we have $t_{0}\sigma(n-1)=\rho(n-1)$ and $t_{0}\sigma(n-2)\leq\rho(n-2)$.
    Hence, we obtain
    \[
        \frac{\sigma(n-2)}{\sigma(n-1)}=\frac{t_{0}\sigma(n-2)}{t_{0}\sigma(n-1)}=\frac{t_{0}\sigma(n-2)}{\rho(n-1)}\leq\frac{\rho(n-2)}{\rho(n-1)}.\qedhere
    \]
\end{proof}

\section{Essential sets and proof of the main result\label{sec:Essential-sets}}

In this section, we prove our main result Theorem~\ref{thm:main-theorem}
through the main technical results of this paper. The arguments in
this section have the proof of Lemma~\ref{lem:FerniqueBabyBootstrap}
in \cite[Lemma~6.1]{Fernique2021} as their germ, and closely follows
\cite[Section~6]{Messerschmidt2d}.

We start with the following definition:
\begin{defn}
    \label{def:essential-set}Let $d,n\in\N$ with $d,n\geq2$ and $\Sigma:=\{0,\ldots,n-1\}$.
    Let $\mathbf{S}$ be a heteroperturbative set of labeled spherical
    triangulations of unit spheres. We say a fundamental set $C\subseteq\codes(\Sigma)$
    is an \emph{$n$-essential set for $\mathbf{S}$ in dimension $d$
    }if there exist monotone maps $\rho,\sigma:\Sigma\to(0,\infty)$ so
    that, for every $c:T_{c}\in C$, the set
    \[
        \set{P\in\mathbf{S}}{\rho\underleq{c:T_{c}}P\underleqtop{c:T_{c}}\sigma}
    \]
    is non-empty. We denote the set of all $n$-essential sets for $\mathbf{S}$
    in dimension $d$ by $\essential_{d,n}(\mathbf{S}).$
\end{defn}

Definition~\ref{def:essential-set} places further conditions on
fundamental sets. In showing that these extra conditions are automatically
satisfied by fundamental sets that arise from compact packings associated
to the heteroperturbative set $\mathbf{S}$, we can show that the
cardinality of the set $\Pi_{d,n}(\mathbf{S})$ is bounded above by
the cardinality of the set $\essential_{d,n}(\mathbf{S})$ (cf. Lemma~\ref{lem:Pidn-cardinality-bounded-by-n-essential-sets}).
The aim in this section is thus to show, for $d,n\in\N$ with $d,n\geq2$
and any heteroperturbative set $\mathbf{S}$, that the set $\essential_{d,n}(\mathbf{S})$
is finite (cf. Corollaries~\ref{cor:ess-finite} and~\ref{cor:main-result}).
This is achieved through a strong induction arglument by showing first
showing $\essential_{d,2}(\mathbf{S})$ is finite, and subsequently
showing that $\essential_{d,n}(\mathbf{S})$ is finite under assumption
that all the `lower' sets $\essential_{d,k}(\mathbf{S})$ with $k\in\{2,3,\ldots,n-1\}$
are finite (cf. Lemmas~\ref{lem:2-ess-finite} and~\ref{lem:essential-sets-finite-induction-step}).

\medskip

Before we continue, at the request of the anonymous referee, we explain
some more of the intuition behind this section. Admittedly, our choice
of Definition~\ref{def:essential-set} may appear somewhat opaque.
The reason for this apparent opacity is likely the process at arriving
at this definition through the trial and error approach taken toward
proving the results in this section \emph{without }first having a
concrete definition of an essential set. Only after the results in
this section had been refined by trial and error, could one see exactly
what the ``correct''\footnote{By ``correct'' in this context we mean: Firstly, the cardinality of
    $\essential_{d,n}(\mathbf{S})$ bounds the cardinality of $\Pi_{d,n}(\mathbf{S})$.
    And secondly, the definition of $\essential_{d,n}(\mathbf{S})$ allows
    for proof of the main result through application of The Bootstrapping
    Lemma in as elegant a manner as is possible.} definition of an essential set \emph{must }be. The conditions placed
on the elements of $\essential_{d,n}(\mathbf{S})$ by Definition~\ref{def:essential-set}
along with the notations, $\blacktriangleleft$ and $\vartriangleleft$,
were then chosen in exactly such a way so as to be able to precisely
relate an element in $\essential_{d,n}(\mathbf{S})$ to elements in
the `lower' sets $\essential_{d,k}(\mathbf{S})$ for $k\in\{2,3,\ldots,n-1\}$
through the $\downarrow$-operation (cf. Definition~\ref{def:down-arrow}
and Lemmas~\ref{lem:fundamentally-downward-preserved} and~\ref{lem:down-essential}).
Furthermore, the conditions from Definition~\ref{def:essential-set}
were chosen such that, in relating an element $C$ from $\essential_{d,n}(\mathbf{S})$
through the $\downarrow$-operation to elements in the `lower' sets
$\essential_{d,k}(\mathbf{S})$ for $k\in\{2,3,\ldots,n-1\}$, allows
for obtaining information on the map $\rho$ for $C$ in terms of
the $\sigma$'s from the lower essential sets through application
of The Bootstrapping Lemma (Lemma~\ref{lem:bootstrapping-lemma2-1}).
This idea forms the heart of the proof of Lemma~\ref{lem:essential-sets-finite-induction-step}.

\medskip

We now proceed with the technical details of this section.
\begin{lem}
    \label{lem:Pidn-cardinality-bounded-by-n-essential-sets}Let $d,n\in\N$
    with $d,n\geq2$. Let $\mathbf{S}$ be a heteroperturbative set of
    labeled spherical triangulations of unit spheres. The set $\Pi_{d,n}(\mathbf{S})$
    has cardinality at most that of the set $\essential_{d,n}(\mathbf{S}).$
\end{lem}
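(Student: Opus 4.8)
The plan is to exhibit an injection from $\Pi_{d,n}(\mathbf{S})$ into $\essential_{d,n}(\mathbf{S})$, which immediately yields the cardinality bound. Given a radius configuration $R\in\Pi_{d,n}(\mathbf{S})$, by definition there is a compact sphere packing $\packf p$ of $\Rd$ associated to $\mathbf{S}$ with $\radii(\packf p)=R$, $|R|=n$, and $\max R=1$. Canonically label $\packf p$ by $\Sigma=\{0,\ldots,n-1\}$ and let $\rho$ be its canonical realizer, which is monotone by construction. By Theorem~\ref{thm:packings-determine-fundamental-set}, the set $C:=\set{c:T\in\pcodes(\packf p)}{c\leq n-2}$ is a fundamental set. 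I would then check that $C$ together with the choice $\sigma:=\rho$ witnesses that $C\in\essential_{d,n}(\mathbf{S})$: for each $c:T_c\in C$, the sphere in $\packf p$ whose canonical packing code is $c:T_c$ has a canonical labeled spherical triangulation $P_c$, which lies in $\mathbf{S}$ by hypothesis that $\packf p$ is associated to $\mathbf{S}$, and by Lemma~\ref{lem:packing-determines-triangulations-from-Q} lies in $\mathbf{Q}$, so that both $\rho\underleq{c:T_c}P_c$ (condition (1) of Definition~\ref{def:define-Q}) and $P_c\underleqtop{c:T_c}\sigma$ with $\sigma=\rho$ (condition (2), equality in both directions). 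Hence the displayed set in Definition~\ref{def:essential-set} is non-empty for every code in $C$, so $C$ is $n$-essential for $\mathbf{S}$ in dimension $d$. This defines a map $\Pi_{d,n}(\mathbf{S})\to\essential_{d,n}(\mathbf{S})$, $R\mapsto C$.

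The remaining task is to verify this map is well-defined (independent of the choice of packing $\packf p$ realizing $R$) and injective. For injectivity, suppose two configurations $R,R'\in\Pi_{d,n}(\mathbf{S})$ give rise to the same fundamental set $C\in\essential_{d,n}(\mathbf{S})$, via packings $\packf p$ and $\packf p'$ with canonical realizers $\rho$ and $\rho'$ respectively. Both $\rho$ and $\rho'$ lie in $\{\tau:\Sigma\to(0,\infty):\tau(n-1)=1\}$, and $C\subseteq\pcodes(\packf p)$ is fundamental. By Theorem~\ref{thm:canonical-realizers-are-unique-for-packings} applied to $\packf p$, the canonical realizer $\rho$ is the \emph{unique} map in that set for which each $c:T_c\in C$ admits a $P_c\in\mathbf{S}$ combinatorially equivalent to $T_c$ with identical labels whose edge lengths are $c_{\labelof w}^{\labelof v}|_{\rho}$. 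But $\rho'$ satisfies exactly this same condition (the argument from the first paragraph, applied to $\packf p'$, shows each code in $C$ is realized with edge lengths given by $\rho'$ via the canonical triangulations of $\packf p'$, which lie in $\mathbf{S}$). Hence $\rho=\rho'$, so $R=\rho(\Sigma)=\rho'(\Sigma)=R'$. The same uniqueness argument handles well-definedness: the fundamental set $C$ obtained depends only on the combinatorial data, but more robustly, even if different packings realizing $R$ gave different fundamental sets, we just need the assignment to land in $\essential_{d,n}(\mathbf{S})$ and be injective — one can simply fix, once and for all, a choice of packing for each $R$, so well-definedness is a non-issue; injectivity is the substantive point.

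The main obstacle is making the injectivity argument airtight: one must be careful that the uniqueness statement of Theorem~\ref{thm:canonical-realizers-are-unique-for-packings} applies with $C$ regarded as a subset of $\pcodes(\packf p')$ as well as $\pcodes(\packf p)$. Since $C$ is just a set of packing codes over $\Sigma$ and is fundamental in the abstract sense of Definition~5.1 (not depending on which packing it came from), and since by the first paragraph the canonical realizer $\rho'$ of $\packf p'$ does satisfy the realizability condition relative to $C$ (its canonical triangulations are in $\mathbf{S}$ and realize each $T_c$ with edge lengths from $\rho'$), Theorem~\ref{thm:canonical-realizers-are-unique-for-packings} with $\packf p'$ in the role of $\packf p$ forces $\rho'$ to equal whatever the unique such realizer is — but that realizer, being determined purely by $C$ and $\mathbf{S}$, must coincide with $\rho$. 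Thus $\rho=\rho'$ and $R=R'$, completing the injection and hence the proof.

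\begin{proof}
    Fix for each $R\in\Pi_{d,n}(\mathbf{S})$ a canonically labeled
    compact sphere packing $\packf p_{R}$ in $\Rd$ associated to
    $\mathbf{S}$ with $\radii(\packf p_{R})=R$, $|R|=n$, and
    $\max R=1$, labeled by $\Sigma:=\{0,\ldots,n-1\}$, and let
    $\rho_{R}:\Sigma\to(0,1]$ be its canonical realizer. By
    Theorem~\ref{thm:packings-determine-fundamental-set}, the set
    \[
        C_{R}:=\set{c:T\in\pcodes(\packf p_{R})}{c\leq n-2}
    \]
    is a fundamental set.

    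We first check that $C_{R}\in\essential_{d,n}(\mathbf{S})$, using
    $\rho:=\rho_{R}$ and $\sigma:=\rho_{R}$, both of which are monotone.
    Let $c:T_{c}\in C_{R}$. There is a sphere $A\in\packf p_{R}$ with
    $\code(A)=c:T_{c}$; let $P_{c}$ be its canonical labeled
    spherical triangulation of the unit sphere of $\Rd$. Then $P_{c}$
    is combinatorially equivalent to $T_{c}$ with identical labels, and
    since $\packf p_{R}$ is associated to $\mathbf{S}$ we have
    $P_{c}\in\mathbf{S}$. By
    Lemma~\ref{lem:packing-determines-triangulations-from-Q}, $P_{c}\in\mathbf{Q}$
    with the monotone map $\rho_{R}$ as in Definition~\ref{def:define-Q}.
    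Condition~(1) of Definition~\ref{def:define-Q} gives
    $\rho\underleq{c:T_{c}}P_{c}$, and condition~(2) of
    Definition~\ref{def:define-Q} gives that for every edge $vw$ of
    $T_{c}$ the corresponding edge of $P_{c}$ has geodesic length
    exactly $c_{\labelof w}^{\labelof v}|_{\rho_{R}}$, so in particular
    $P_{c}\underleqtop{c:T_{c}}\sigma$ holds with $\sigma=\rho_{R}$.
    Hence $P_{c}$ lies in the set
    $\set{P\in\mathbf{S}}{\rho\underleq{c:T_{c}}P\underleqtop{c:T_{c}}\sigma}$,
    which is therefore non-empty. As $c:T_{c}\in C_{R}$ was arbitrary,
    $C_{R}$ is an $n$-essential set for $\mathbf{S}$ in dimension $d$,
    i.e., $C_{R}\in\essential_{d,n}(\mathbf{S})$.

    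We thus have a map $\Phi:\Pi_{d,n}(\mathbf{S})\to\essential_{d,n}(\mathbf{S})$,
    $\Phi(R):=C_{R}$. We claim $\Phi$ is injective. Suppose
    $R,R'\in\Pi_{d,n}(\mathbf{S})$ with $C_{R}=C_{R'}=:C$. Note that
    $C\subseteq\pcodes(\packf p_{R})$ is fundamental, and the canonical
    realizer $\rho_{R}$ lies in
    $\{\tau:\Sigma\to(0,\infty):\tau(n-1)=1\}$. By the argument of the
    previous paragraph applied to $\packf p_{R}$ (respectively
    $\packf p_{R'}$), for each $c:T_{c}\in C$ there exists a spherical
    triangulation in $\mathbf{S}$ that is combinatorially equivalent to
    $T_{c}$ with identical labels and whose edge corresponding to any
    edge $vw$ of $T_{c}$ has length $c_{\labelof w}^{\labelof v}|_{\rho_{R}}$
    (respectively $c_{\labelof w}^{\labelof v}|_{\rho_{R'}}$). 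Thus both
    $\rho_{R}$ and $\rho_{R'}$ satisfy the condition appearing in
    Theorem~\ref{thm:canonical-realizers-are-unique-for-packings} for
    the packing $\packf p_{R}$ and the fundamental set
    $C\subseteq\pcodes(\packf p_{R})$. By the uniqueness assertion of
    that theorem, $\rho_{R}=\rho_{R'}$, and therefore
    $R=\rho_{R}(\Sigma)=\rho_{R'}(\Sigma)=R'$. Hence $\Phi$ is
    injective, and consequently the cardinality of
    $\Pi_{d,n}(\mathbf{S})$ is at most that of
    $\essential_{d,n}(\mathbf{S})$.
\end{proof}
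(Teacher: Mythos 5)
Your proof is correct and follows essentially the same route as the paper: obtain a fundamental set from the packing via Theorem~\ref{thm:packings-determine-fundamental-set}, witness $n$-essentiality by taking both $\rho$ and $\sigma$ equal to the canonical realizer together with Lemma~\ref{lem:packing-determines-triangulations-from-Q}, and deduce injectivity from the uniqueness assertion of Theorem~\ref{thm:canonical-realizers-are-unique-for-packings}. Your packaging as an explicit injection, with the careful check that $\rho_{R'}$ satisfies the realizability condition relative to $C$ viewed inside $\pcodes(\packf p_{R})$, is a slightly more explicit rendering of the paper's ``at least one / at most one'' counting argument.
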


\begin{proof}
    Let $\packf p$ be a canonically labeled compact sphere packing in
    $\Rd$ with $|\radii(\packf p)|=n$ so that $\max\radii(\packf p)=1$
    and such that the canonical labeled triangulation of the unit sphere
    associated to every sphere in $\packf p$ is an element of $\cal{\mathbf{S}}$.
    By Theorem~\ref{thm:packings-determine-fundamental-set} there exists
    a fundamental set $C\subseteq\pcodes(\packf p)$. Taking both maps
    $\rho$ and $\sigma$ in Definition~\ref{def:essential-set} as equal
    to the canonical realizer of $\packf p$ and applying Lemma~\ref{lem:packing-determines-triangulations-from-Q},
    we see that for all $c:T_{c}\in C$, the set $\set{P\in\cal{\mathbf{S}}}{\rho\underleq{c:T_{c}}P\underleqtop{c:T_{c}}\sigma}$
    is non-empty and hence $C$ is an $n$-essential set for $\mathbf{S}$
    in dimension $d$.

    Let $\packf q$ be another canonically labeled compact sphere packing
    in $\Rd$ satisfying $|\radii(\packf q)|=n$ with $\max\radii(\packf q)=1$,
    is such that $C\subseteq\pcodes(\packf q)$, and has the canonical
    labeled triangulation of the unit sphere associated to every sphere
    in $\packf q$ being an element of $\cal{\mathbf{S}}$. By Lemma~\ref{thm:canonical-realizers-are-unique-for-packings},
    the canonical realizers of the packings $\packf p$ and $\packf q$
    are equal.

    We conclude that each compact packing associated to $\mathbf{S}$
    determines at least one element of the set $\essential_{d,n}(\mathbf{S})$,
    and each element of the set $\essential_{d,n}(\mathbf{S})$ determines
    at most one element of $\Pi_{d,n}(\mathbf{S}).$ Therefore $\Pi_{d,n}(\mathbf{S})$
    has cardinality at most that of the cardinality of the set $\essential_{d,n}(\mathbf{S})$.
\end{proof}
\begin{lem}
    \label{lem:2-ess-finite}Let $d\in\N$ with $d\geq2$. Let $\mathbf{S}$
    be a heteroperturbative set of labeled spherical triangulations of
    unit spheres. The set $\essential_{d,2}(\mathbf{S})$ of all $2$-essential
    sets for $\mathbf{S}$ in dimension $d$ is finite.
\end{lem}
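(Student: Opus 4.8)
The plan is to show that the defining data of a $2$-essential set for $\mathbf{S}$ in dimension $d$ can take only finitely many values, and that finitely much data can be assembled into only finitely many fundamental sets. First, observe that when $n=2$ we have $\Sigma=\{0,1\}$, so by definition a fundamental set $C\subseteq\codes(\Sigma)$ must satisfy $\{c : c:T\in C\}=\{0\}$; that is, every packing code in $C$ has center label $0$, and the fundamental-set condition applied to $K=\{0\}$ forces that some code $0:T\in C$ has a vertex with a label not in $\{0\}$, i.e.\ with label $1$. So every $2$-essential set consists of codes of the form $0:T$ where $T$ is a homogeneous abstract simplicial $(d-1)$-complex with vertices labeled from $\{0,1\}$, and at least one such $T$ actually uses the label $1$. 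The key point is therefore to bound the number of vertices (equivalently, the size) of the neighbor complexes $T$ that can arise.

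The main step is the size bound on the neighbor complexes, and I expect this to be the chief obstacle. Here is how I would get it. By Definition~\ref{def:essential-set}, since $C\in\essential_{d,2}(\mathbf{S})$ there exist monotone maps $\rho,\sigma:\{0,1\}\to(0,\infty)$ so that for each $c:T_c\in C$ the set $\set{P\in\mathbf{S}}{\rho\underleq{c:T_c}P\underleqtop{c:T_c}\sigma}$ is non-empty; pick such a $P$. Then $P$ is a labeled spherical triangulation of the unit sphere in $\Rd$ that is combinatorially equivalent to $T_c$ with identical labels, whose edges have geodesic lengths bounded below by the realized angle symbols $c_{\labelof w}^{\labelof v}|_\rho$, and whose all pairwise vertex distances are bounded below by these same quantities. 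Because $\rho(0),\rho(1)$ are positive reals and there are only finitely many ($\leq 3$) distinct angle symbols $0_0^0, 0_1^0, 0_0^1, 0_1^1$ over $\{0,1\}$, there is a uniform positive lower bound $\delta=\delta(\rho)>0$ on the pairwise geodesic distance between \emph{any} two vertices of $P$. A standard volume/packing argument on the unit sphere $S^{d-1}$ then caps the number of vertices of $P$, hence the number of vertices of $T_c$, by a constant depending only on $d$ and $\delta$. The subtlety is that $\delta$ depends on $\rho$, and $\rho$ is allowed to vary over $2$-essential sets; I would handle this by noting that $\rho$ is monotone with two values, so after normalizing (say $\rho(1)=1$) the only free parameter is the ratio $\rho(0)/\rho(1)\in(0,1]$, and I would show this ratio itself is bounded away from $0$ — this is exactly where Lemma~\ref{lem:FerniqueBabyBootstrap}-style reasoning, or more precisely the Bootstrapping Lemma (Lemma~\ref{lem:bootstrapping-lemma2-1}) applied with $\sigma$ equal to the canonical realizer of a compact packing with two sizes, bounds $\rho(0)/\rho(1)$ from below by a constant depending only on $d$. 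Feeding a uniform $\delta$ back into the volume argument yields a uniform bound $N=N(d)$ on the number of vertices of every neighbor complex appearing in any $2$-essential set.

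With the vertex count of each neighbor complex bounded by $N(d)$, there are only finitely many isomorphism types of labeled homogeneous abstract simplicial $(d-1)$-complexes on at most $N(d)$ vertices labeled from $\{0,1\}$; hence only finitely many packing codes $0:T$ can occur. A fundamental set is then a subset of this finite pool of codes, so there are only finitely many possibilities for $C$, and therefore $\essential_{d,2}(\mathbf{S})$ is finite. The one genuine piece of work is the uniform lower bound on $\rho(0)/\rho(1)$; everything else is bookkeeping plus an elementary sphere-packing count on $S^{d-1}$.
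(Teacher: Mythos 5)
Your overall skeleton (every code has center label $0$, get a uniform lower bound $\delta$ on pairwise vertex distances in the witnessing triangulations, run a covering argument on the unit sphere, conclude that neighbor complexes have at most $N(d)$ vertices, and bound $\essential_{d,2}(\mathbf{S})$ by a power set) is exactly the paper's. However, the step you single out as ``the one genuine piece of work'' --- a uniform lower bound on $\rho(0)/\rho(1)$ --- is both unnecessary and, as you propose to obtain it, broken. The Bootstrapping Lemma compares the \emph{two realizers attached to the same fundamental set}: to apply it with $\sigma$ the canonical realizer of some compact packing with two sizes, you would need, for every code $0\colon T_c$ in your abstract $2$-essential set $C$, a triangulation $Q_c\in\mathbf{S}$ combinatorially equivalent to $T_c$ with edge lengths bounded above by the $\sigma$-realized angle symbols. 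An abstract $2$-essential set need not arise from any compact packing, so there is no reason such $Q_c$ exist; and there is no ``lower'' essential set $\essential_{d,1}(\mathbf{S})$ to bootstrap against, which is precisely why $n=2$ must be the base case and cannot reuse the induction-step mechanism.

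The missing observation is that no bound on $\rho(0)/\rho(1)$ is needed. Since the center label of every code in $C$ is $0$, the only angle symbols that matter are $0_{0}^{0}$, $0_{1}^{0}$, and $0_{1}^{1}$, and monotonicity of $\rho$ (i.e.\ $\rho(0)\leq\rho(1)$) forces each of $0_{0}^{0}|_{\rho}$, $0_{1}^{0}|_{\rho}$, $0_{1}^{1}|_{\rho}$ to be at least $\pi/3$: in the triangle with side lengths $\rho(0)+\rho(a)$, $\rho(0)+\rho(b)$, $\rho(a)+\rho(b)$, the side $\rho(a)+\rho(b)$ opposite the vertex labeled $0$ is the longest (or tied), so the angle there is at least $\pi/3$. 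One checks this directly, e.g.\ $0_{1}^{0}|_{\rho}=\arccos\bigl(\rho(0)/(\rho(0)+\rho(1))\bigr)\geq\arccos(1/2)$. Hence $\delta=\pi/3$ works uniformly over all $2$-essential sets and all admissible $\rho$, and the rest of your argument (covering $S^{d-1}$ by geodesic balls of radius $\pi/6$, finitely many labeled complexes on at most $N$ vertices, power set) closes the proof exactly as in the paper.
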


\begin{proof}
    Let $C\subseteq\codes(\{0,1\})$ be any $2$-essential set for $\mathbf{S}$
    in dimension $d$. Since $C$ is fundamental, for all $c:T_{c}\in C$
    we have $c=0$. Further, there exist monotone maps $\rho,\sigma:\{0,1\}\to(0,\infty)$
    so that for all $c:T_{c}\in C$ the set $\set{P\in\mathbf{S}}{\rho\underleq{c:T_{c}}P\underleqtop{c:T_{c}}\sigma}$
    is non-empty. Since $\rho$ is monotone, we have that $\pi/3\leq0_{0}^{0}|_{\rho},\ 0_{0}^{1}|_{\rho},\ 0_{1}^{1}|_{\rho}$.
    Hence, for all $c:T_{c}\in C$, every spherical triangulation from
    the set $\set{P\in\mathbf{S}}{\rho\underleq{c:T_{c}}P\underleqtop{c:T_{c}}\sigma}$
    has a geodesic distance of at least $\pi/3$ between all pairs of
    distinct vertices.

    By compactness of the unit sphere $S\subseteq\Rd$, we let $N\in\N$
    be the least cardinality of a cover of $S$ by geodesic open balls
    of radius $\pi/6$ with centers on $S$. But then for all codes $c:T_{c}\in C$
    no labeled spherical triangulation from the set $\set{P\in\mathbf{S}}{\rho\underleq{c:T_{c}}P\underleqtop{c:T_{c}}\sigma}$
    can have more than $N$ vertices otherwise there would exist two distinct
    vertices strictly closer than $\pi/3$. Hence in every code $0:T_{c}\in C$,
    the neighbor complex $T$ has at most $N$ vertices. We conclude that
    the set of all $2$-essential sets or $\mathbf{S}$ in dimension $d$,
    $\essential_{d,2}(\mathbf{S})$, has cardinality at most that of the
    powerset of the finite set $\set{0:T\in\codes(\{0,1\})}{|\text{vertex set of }T|\leq N}.$
\end{proof}
\begin{defn}
    \label{def:down-arrow}Let $\Sigma$ denote any totally ordered set
    of symbols. For any structure $T$ labeled by elements from $\Sigma$,
    for $s\in\Sigma$, by $T\downarrow_{s}$ we mean a relabeling of $T$
    in which all labels occurring in $T$ that are strictly larger than
    $s$ are replaced by $s$.
\end{defn}

\begin{lem}
    \label{lem:fundamentally-downward-preserved}For any $k\in\N$, define
    $\Sigma_{k}:=\{0,\ldots,k-1\}$. Let $d,n\in\N$ with $d,n\geq2$.
    If $C\subseteq\codes(\Sigma_{n})$ is fundamental, then for any $k\in\N$
    with $2\leq k\leq n$, the set $C_{k}:=\set{c:T_{c}\downarrow_{k-1}}{c:T_{c}\in C,\ c\leq k-2}\subseteq\codes(\Sigma_{k})$
    is fundamental.
\end{lem}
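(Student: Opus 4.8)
The plan is to verify the two defining conditions of a fundamental set directly for $C_k$, reducing each to the corresponding fact about $C$. First I would observe that the $\downarrow_{k-1}$ operation does not touch any label that is $\leq k-2$, so in particular it fixes the center $c$ of any code $c:T_c\in C$ with $c\leq k-2$; hence every code in $C_k$ has center in $\{0,\ldots,k-2\}$. For surjectivity onto the centers, given $j\in\{0,\ldots,k-2\}\subseteq\{0,\ldots,n-2\}$, fundamentality of $C$ supplies a code $c:T_c\in C$ with $c=j$, which then survives in $C_k$ (since $j\leq k-2$) with unchanged center $j$. This gives $\{c : c:T\in C_k\}=\{0,\ldots,k-2\}$.

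Next I would check the covering condition: for every non-empty $K\subseteq\{0,\ldots,k-2\}$ we must produce a code $c':T'\in C_k$ with $c'\in K$ and $\codeneighborlabels(T')\setminus K\neq\emptyset$. The key point is that $K$ is also a non-empty subset of $\{0,\ldots,n-2\}$, so fundamentality of $C$ yields $c:T_c\in C$ with $c\in K$ and some vertex of $T_c$ carrying a label $\ell\notin K$. Since $c\in K\subseteq\{0,\ldots,k-2\}$ we have $c\leq k-2$, so $c:T_c\downarrow_{k-1}\in C_k$, and its center is still $c\in K$. It remains to see that the relabeled complex $T_c\downarrow_{k-1}$ still has a vertex with label outside $K$. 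Here I would split into cases on the offending label $\ell\in\codeneighborlabels(T_c)\setminus K$: if $\ell\leq k-1$ then $\downarrow_{k-1}$ leaves $\ell$ unchanged (it only alters labels $\geq k$, replacing them by $k-1$; and note $\ell=k-1$ is itself unaffected), so that vertex still witnesses a label $\ell\notin K$; if $\ell\geq k-1$ — equivalently, after relabeling, that vertex (and possibly others) now carries the label $k-1$ — then since $K\subseteq\{0,\ldots,k-2\}$ we have $k-1\notin K$, so again there is a vertex of $T_c\downarrow_{k-1}$ with label outside $K$. In either case the required witness exists, and hence $C_k$ is fundamental.

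I do not anticipate a serious obstacle here; the only subtlety — and the single place requiring a moment's care — is the case analysis on whether the label $\ell\notin K$ that exists in $T_c$ lies below the truncation threshold (and is therefore preserved verbatim) or is truncated down to $k-1$, with the observation that $k-1$ itself is always a label outside any $K\subseteq\{0,\ldots,k-2\}$, so the truncation can never "hide" all witnesses. One should also note at the outset that $C_k\subseteq\codes(\Sigma_k)$ genuinely holds: every label appearing in a code $c:T_c\downarrow_{k-1}$ is either an original label $\leq k-1$ that was untouched, or equals $k-1$, so all labels lie in $\Sigma_k=\{0,\ldots,k-1\}$, and likewise the center $c\leq k-2\leq k-1$ lies in $\Sigma_k$; this is what makes the statement of the lemma well-typed.
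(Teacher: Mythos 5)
Your proposal is correct and follows essentially the same route as the paper's proof: reduce both defining conditions of a fundamental set for $C_k$ to the fundamentality of $C$, and handle the witness label $\ell\notin K$ by a case split on whether it lies below the truncation threshold (hence is preserved) or gets truncated to $k-1$, which is automatically outside any $K\subseteq\{0,\ldots,k-2\}$. The extra well-typedness remark is a harmless addition; no gaps.
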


\begin{proof}
    Let $2\leq k\leq n.$ Since $C$ is fundamental we have that, $\set{c\,}{c:T\in C_{k}}=\{0,\ldots,k-2\}$.
    Let $K$ be a non-empty subset of $\{0,\ldots,k-2\}\subseteq\{0,\ldots,n-2\}$.
    Since $C$ is fundamental there exists some $c:T\in C$ so that $c\in K$,
    but with some $p\in\codeneighborlabels(T)\backslash K$. There are
    two cases, $p\geq k-1$ and $p<k-1$. In the case that $p\geq k-1$,
    then $p\downarrow_{k-1}=k-1\notin K$, so that $c:T\downarrow_{k-1}\in C_{k}$
    is such that $c\in K$, with $p\downarrow_{k-1}\in\codeneighborlabels(T\downarrow_{k-1})\backslash K$.
    On the other hand, if $p<k-1$, then $p\downarrow_{k-1}=p\notin K$,
    and $c:T\downarrow_{k-1}\in C_{k}$ is such that $p=p\downarrow_{k-1}\in\codeneighborlabels(T\downarrow_{k-1})\backslash K$.
    We conclude that $C_{k}$ is fundamental.
\end{proof}
\begin{lem}
    \label{lem:down-essential} For any $k\in\N$, define $\Sigma_{k}:=\{0,\ldots,k-1\}$.
    Let $d,n\in\N$ with $d,n\geq2$. Let $\mathbf{S}$ be a heteroperturbative
    set of labeled spherical triangulations of unit spheres. If $C\subseteq\codes(\Sigma_{n})$
    is an element of $\essential_{d,n}(\mathbf{S})$, then for any $k\in\N$
    with $2\leq k\leq n$, the set $C_{k}:=\set{c:T_{c}\downarrow_{k-1}}{c:T_{c}\in C,\ c\leq k-2}\subseteq\codes(\Sigma_{k})$
    is an element of $\essential_{d,k}(\mathbf{S})$.
\end{lem}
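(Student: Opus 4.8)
The plan is to leverage Lemma~\ref{lem:fundamentally-downward-preserved}, which already tells us that $C_k$ is fundamental, so the only remaining work is to verify the extra ``essential'' conditions of Definition~\ref{def:essential-set}: we must exhibit monotone maps $\rho_k,\sigma_k:\Sigma_k\to(0,\infty)$ so that for every $c:T\downarrow_{k-1}\in C_k$ the set $\set{P\in\mathbf{S}}{\rho_k\underleq{c:T\downarrow_{k-1}}P\underleqtop{c:T\downarrow_{k-1}}\sigma_k}$ is non-empty. Since $C\in\essential_{d,n}(\mathbf{S})$, we are handed monotone $\rho,\sigma:\Sigma_n\to(0,\infty)$ witnessing that $C$ is essential. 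The natural candidates are the restrictions $\rho_k:=\rho|_{\Sigma_k}$ and $\sigma_k:=\sigma|_{\Sigma_k}$; these are monotone because $\rho,\sigma$ are, and the condition $\tau(n-1)=1$ is not required here (Definition~\ref{def:essential-set} does not normalize).

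First I would fix $c:T\downarrow_{k-1}\in C_k$, so that $c:T\in C$ with $c\leq k-2$, and take any $P\in\mathbf{S}$ with $\rho\underleq{c:T}P\underleqtop{c:T}\sigma$. The goal is to produce from $P$ a triangulation $P'\in\mathbf{S}$ certifying membership in the $C_k$-indexed set. The key geometric observation is that $T\downarrow_{k-1}$ is combinatorially identical to $T$ as an abstract simplicial complex — only the labels on vertices with label $\geq k-1$ get collapsed to $k-1$ — so $P$ itself is already combinatorially equivalent to $T\downarrow_{k-1}$ once we relabel its vertices accordingly; call this relabeled triangulation $P'$. I would need to argue $P'\in\mathbf{S}$: this should hold because $\mathbf{S}\subseteq\mathbf{T}$ and $\mathbf{T}$ consists of \emph{all} labeled spherical triangulations with labels in $\N\cup\{0\}$, so relabeling stays inside $\mathbf{T}$; staying inside $\mathbf{S}$ itself is the point that needs a moment's care, and I expect this is where one must either invoke that $\mathbf{S}$ is closed under such relabelings or, more likely, observe that since $P$ already lives in $\mathbf{S}$ and $P'$ differs only by relabeling while $\mathbf{S}$'s defining property (heteroperturbativity of equivalence classes) is about edge-lengths, the relabeled object is unproblematic — I would check whether the paper intends $\mathbf{S}$ to be relabeling-invariant, which seems implicit in the setup.

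Next, the inequalities: for a pair of distinct vertices $v,w$ of $T\downarrow_{k-1}$ with underlying vertices $\bar v,\bar w$ in $T$, I need the geodesic distance between the corresponding vertices in $P'$ to be at least $c_{\lambda w}^{\lambda v}|_{\rho_k}$. But that geodesic distance equals the distance between the corresponding vertices in $P$, which is at least $c_{\lambda \bar w}^{\lambda \bar v}|_{\rho}$ by $\rho\underleq{c:T}P$; so it suffices to check $c_{\lambda \bar w}^{\lambda \bar v}|_{\rho}\geq c_{\lambda w}^{\lambda v}|_{\rho_k}$, i.e. that collapsing a label of value $\geq k-1$ down to $k-1$ does not increase the realized angle symbol when evaluated against the (monotone) realizer. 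Since $c\leq k-2<k-1$, the vertex label of the angle symbol is unaffected, and decreasing the radius at $a$ or $b$ only affects the angle at $c$ monotonically — by the partial-derivative formulas in the proof of Lemma~\ref{lem:realize_angle_symbol_strict_monotone}, $\partial_a c_b^a>0$ and $\partial_b c_b^a>0$, so shrinking the radii at the non-vertex symbols \emph{decreases} $c_b^a$; together with monotonicity of $\rho$ (which guarantees $\rho(k-1)\leq\rho(j)$ for $j\geq k-1$, so $\rho_k$'s value at the collapsed label is no larger than $\rho$'s at the original), this gives the desired inequality. The dual inequality $\sigma_k\underleqtop{c:T\downarrow_{k-1}}$ on edge-lengths: here we need each edge of $P'$ to be \emph{at most} $c_{\lambda w}^{\lambda v}|_{\sigma_k}$, and $P\underleqtop{c:T}\sigma$ gives edge-length at most $c_{\lambda\bar w}^{\lambda\bar v}|_{\sigma}$, so I need $c_{\lambda\bar w}^{\lambda\bar v}|_{\sigma}\leq c_{\lambda w}^{\lambda v}|_{\sigma_k}$ — the same monotonicity argument in the same direction, since again we are shrinking radii at non-vertex symbols and $\sigma$ monotone makes $\sigma_k$ at the collapsed label no larger. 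The main obstacle, then, is not any deep step but rather bookkeeping the direction of the inequalities correctly and confirming that ``monotone'' in Definition~\ref{def:define-Q}/Definition~\ref{def:essential-set} means non-decreasing in the index (so larger index $=$ larger radius), which makes the collapse $j\mapsto k-1$ a shrink; I would state this explicitly and then the rest is the routine verification sketched above. Finally, conclude that $P'$ witnesses non-emptiness of the required set for every $c:T\downarrow_{k-1}\in C_k$, so $C_k\in\essential_{d,k}(\mathbf{S})$.
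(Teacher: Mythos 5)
Your overall strategy matches the paper's: invoke Lemma~\ref{lem:fundamentally-downward-preserved} for fundamentality, reuse the triangulations witnessing that $C$ is essential, and check that the realized angle symbols move in the right direction under the label collapse. The $\rho$ half of your argument is fine: taking $\rho_k$ to be the restriction of $\rho$ works, because collapsing a label $j\geq k-1$ to $k-1$ replaces $\rho(j)$ by the smaller value $\rho(k-1)$ at a non-vertex symbol, and since $\partial_a c_b^a>0$ this only decreases the realized angle, which is the direction compatible with $\underleq{}$.

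However, there is a genuine error in the $\sigma$ half. You correctly identify that you need $c_{\lambda\bar w}^{\lambda\bar v}|_{\sigma}\leq c_{\lambda w}^{\lambda v}|_{\sigma_k}$, but then assert this follows by ``the same monotonicity argument in the same direction.'' It does not: with $\sigma_k$ the plain restriction, the collapse again \emph{shrinks} the radii at non-vertex symbols (from $\sigma(j)$ down to $\sigma(k-1)$ for $j\geq k-1$), and by the same positive partial derivatives this \emph{decreases} the realized angle, giving $c_{\lambda w}^{\lambda v}|_{\sigma_k}\leq c_{\lambda\bar w}^{\lambda\bar v}|_{\sigma}$ --- the reverse of what you need, and strictly so whenever a neighbor label $\geq k$ actually occurs and $\sigma$ is strictly increasing there. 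So your candidate $\sigma_k$ does not certify the upper-bound condition $\underleqtop{}\sigma_k$. The paper's fix is to break the symmetry with $\rho$: it defines $\sigma'(s):=\sigma(s)$ for $s<k-1$ but $\sigma'(k-1):=\sigma(n-1)$, the \emph{largest} value of $\sigma$, so that collapsing labels only increases the corresponding radii under $\sigma'$ and hence only increases the realized angle symbols, yielding $c_b^a|_{\sigma}\leq c_b^a\downarrow_{k-1}|_{\sigma'}$ as required. (Your side remark about whether the relabeled $P'$ stays in $\mathbf{S}$ is a fair observation --- the paper silently reuses the same $P$ for the collapsed code --- but that is a presentational looseness in the paper, not the substantive issue; the substantive issue is the choice of $\sigma_k$.)
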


\begin{proof}
    Assume that $C\subseteq\codes(\Sigma_{n})$ is an $n$-essential set
    for $\mathbf{S}$ in dimension $d$. Let $\sigma,\rho:\Sigma_{n}\to(0,\infty)$
    be monotone maps so that for all $c:T_{c}\in C$ the set
    \[
        \set{P\in\mathbf{S}}{\rho\underleq{c:T_{c}}P\underleqtop{c:T_{c}}\sigma}
    \]
    is non-empty.

    Let $k\in\{2,\ldots,n\}$, and define $C_{k}:=\set{c:T_{c}\downarrow_{k-1}}{c:T_{c}\in C,\ c\leq k-2}\subseteq\codes(\Sigma_{k})$.
    By Lemma~\ref{lem:fundamentally-downward-preserved}, the set $C_{k}$
    is fundamental.\textbf{ }Define $\rho',\sigma':\Sigma_{k}\to(0,\infty)$
    as

    \[
        \rho'(s):=\begin{cases}
            \rho(s)   & s<k-1 \\
            \rho(k-1) & s=k-1
        \end{cases}\qquad(s\in\Sigma_{k})
    \]
    and
    \[
        \sigma'(s):=\begin{cases}
            \sigma(s)   & s<k-1 \\
            \sigma(n-1) & s=k-1
        \end{cases}\qquad(s\in\Sigma_{k}).
    \]
    Since $\rho$ and $\sigma$ are both monotone, so are $\rho'$ and
    $\sigma'$. Furthermore, we note, for all $c\in\{0,\ldots,k-2\}$
    and $a,b\in\Sigma$, that
    \[
        c_{b}^{a}\downarrow_{k-1}\,|_{\rho'}\leq c_{b}^{a}|_{\rho}\qquad\text{and}\qquad c_{b}^{a}|_{\sigma}\leq c_{b}^{a}\downarrow_{k-1}\,|_{\sigma'}\ .
    \]
    Therefore, for all $c:T_{c}\downarrow_{k-1}\in C_{k}$,
    \[
        \emptyset\neq\set{P\in\mathbf{S}}{\rho\underleq{c:T_{c}}P\underleqtop{c:T_{c}}\sigma}\subseteq\set{P\in\mathbf{S}}{\rho'\underleq{c:T_{c}\downarrow_{k-1}}P\underleqtop{c:T_{c}\downarrow_{k-1}}\sigma'}.
    \]
    Therefore $C_{k}$ is a $k$-essential set for $\mathbf{S}$ in dimension
    $d$.
\end{proof}
\begin{lem}
    \label{lem:essential-sets-finite-induction-step}Let $d,n\in\N$ with
    $d,n\geq2$. Let $\mathbf{S}$ be a heteroperturbative set of labeled
    spherical triangulations of unit spheres. If, for all $k\in\{2,\ldots,n-1\}$,
    the set $\essential_{d,k}(\mathbf{S})$ is finite, then the set $\essential_{d,n}(\mathbf{S})$
    is finite.
\end{lem}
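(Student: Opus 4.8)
The plan is to reduce the statement to a \emph{uniform} bound on the number of vertices occurring in the neighbor complexes of the packing codes appearing in elements of $\essential_{d,n}(\mathbf{S})$. Indeed, suppose we produce an integer $N$, depending only on $d$, $n$ and $\mathbf{S}$, with $|V(T)|\leq N$ for every code $c:T$ in every $C\in\essential_{d,n}(\mathbf{S})$. Up to isomorphism there are only finitely many packing codes over $\Sigma_{n}:=\{0,\ldots,n-1\}$ whose neighbor complex has at most $N$ vertices, so every element of $\essential_{d,n}(\mathbf{S})$ is a subset of this finite set of codes, and hence $\essential_{d,n}(\mathbf{S})$ is finite. (If $\essential_{d,n}(\mathbf{S})=\emptyset$ the conclusion is trivial, so we fix $C\in\essential_{d,n}(\mathbf{S})$, witnessed by monotone maps $\rho,\sigma:\Sigma_{n}\to(0,\infty)$ as in Definition~\ref{def:essential-set}; only $\rho$ will be used below.)

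The crux is to bound from below, by a constant independent of $C$, each consecutive ratio $\rho(k-2)/\rho(k-1)$ for $k\in\{2,\ldots,n-1\}$. Fix such a $k$ and put $C_{k}:=\set{c:T_{c}\downarrow_{k-1}}{c:T_{c}\in C,\ c\leq k-2}$; by Lemma~\ref{lem:fundamentally-downward-preserved} it is a fundamental set over $\Sigma_{k}$, and by Lemma~\ref{lem:down-essential} it is an element of $\essential_{d,k}(\mathbf{S})$, witnessed (inspecting that proof) by a pair $(\rho'_{k},\sigma'_{k})$ of monotone maps with $\rho'_{k}(k-2)=\rho(k-2)$ and $\rho'_{k}(k-1)=\rho(k-1)$. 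In particular $\essential_{d,k}(\mathbf{S})$ is non-empty, and it is finite by hypothesis; so I would fix, for every $D\in\essential_{d,k}(\mathbf{S})$, a witnessing pair $(\rho_{D},\sigma_{D})$ of monotone maps, and set $m_{k}:=\min\{\sigma_{D}(k-2)/\sigma_{D}(k-1):D\in\essential_{d,k}(\mathbf{S})\}>0$ (note $m_{k}\leq1$ since each $\sigma_{D}$ is monotone). I would then apply The Bootstrapping Lemma (Lemma~\ref{lem:bootstrapping-lemma2-1}) over $\Sigma_{k}$ to the fundamental set $C_{k}$ with the \emph{mixed} pair $(\rho'_{k},\sigma_{C_{k}})$: for each code $e:T_{e}\in C_{k}$ the witness $(\rho'_{k},\sigma'_{k})$ supplies a triangulation $P_{e}\in\mathbf{S}$ with $\rho'_{k}\underleqtop{e:T_{e}}P_{e}$ (as $\rho'_{k}\underleq{e:T_{e}}P_{e}$ entails $\rho'_{k}\underleqtop{e:T_{e}}P_{e}$), while the pre-chosen witness $(\rho_{C_{k}},\sigma_{C_{k}})$ supplies a triangulation $Q_{e}\in\mathbf{S}$ with $Q_{e}\underleqtop{e:T_{e}}\sigma_{C_{k}}$; thus the hypotheses of Lemma~\ref{lem:bootstrapping-lemma2-1} hold, and its conclusion reads $\sigma_{C_{k}}(k-2)/\sigma_{C_{k}}(k-1)\leq\rho'_{k}(k-2)/\rho'_{k}(k-1)=\rho(k-2)/\rho(k-1)$, whence $\rho(k-2)/\rho(k-1)\geq m_{k}$.

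Telescoping this, for every possible center $c\in\{0,\ldots,n-2\}$ of a code in $C$ one obtains
\[
\frac{\rho(0)}{\rho(c)}=\prod_{k=2}^{c+1}\frac{\rho(k-2)}{\rho(k-1)}\geq\prod_{k=2}^{c+1}m_{k}\geq\prod_{k=2}^{n-1}m_{k}=:L>0,
\]
using $c+1\leq n-1$ and $m_{k}\leq1$ (for $c=0$ both products are empty and this reads $1\geq L$). To finish, set $\delta_{0}:=\arccos\left(1-\frac{2L^{2}}{(1+L)^{2}}\right)\in(0,\pi)$ and let $c:T_{c}\in C$ be arbitrary. By Definition~\ref{def:essential-set} there is a triangulation $P\in\mathbf{S}$ with $\rho\underleq{c:T_{c}}P$, so any two distinct vertices $v,w$ of $P$ lie at geodesic distance at least $c_{\labelof w}^{\labelof v}|_{\rho}$. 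Since $\rho$ is monotone, and since (by the partial-derivative computations in the proof of Lemma~\ref{lem:realize_angle_symbol_strict_monotone}, together with the symmetry $c_{b}^{a}=c_{a}^{b}$) the realized angle symbol $c_{b}^{a}|_{\rho}$ is strictly increasing in each of $\rho(a)$ and $\rho(b)$, we get $c_{\labelof w}^{\labelof v}|_{\rho}\geq c_{0}^{0}|_{\rho}=\arccos\left(1-\frac{2x^{2}}{(1+x)^{2}}\right)$ with $x=\rho(0)/\rho(c)\geq L$; as the right-hand side is increasing in $x$, this is at least $\delta_{0}$. Hence the vertices of $P$ form a $\delta_{0}$-separated subset of the unit $(d-1)$-sphere, and by compactness of that sphere there is a finite upper bound $N=N(d,\delta_{0})$ on the cardinality of such a subset. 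Therefore $|V(T_{c})|=|V(P)|\leq N$, which is the uniform vertex bound required in the first paragraph. (For $n=2$ all products degenerate, $L=1$ and $\delta_{0}=\pi/3$, recovering Lemma~\ref{lem:2-ess-finite}.)

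I expect the substantive point to be the construction in the middle paragraph: it is precisely the $\downarrow$-operation, combined with the inductive finiteness of the lower sets $\essential_{d,k}(\mathbf{S})$, that allows one to feed a \emph{fixed} second map $\sigma_{C_{k}}$ (taken from a pre-chosen witness of the lower essential set $C_{k}$) together with the \emph{variable} first map $\rho'_{k}$ (inherited from $C$) into The Bootstrapping Lemma at once, thereby turning that lemma into a $C$-independent lower bound on the consecutive ratios of $\rho$. The reduction to a vertex count, the telescoping, and the elementary monotonicity of the angle-symbol function are then routine.
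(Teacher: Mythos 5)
Your proposal is correct and follows essentially the same route as the paper's proof: fix witnesses for the finitely many lower essential sets, push $C$ down via the $\downarrow$-operation and Lemma~\ref{lem:down-essential}, feed the mixed pair (the inherited $\rho$ on $\Sigma_k$ together with the pre-chosen $\sigma_{C_k}$) into The Bootstrapping Lemma to get $C$-independent lower bounds on the consecutive ratios of $\rho$, telescope, and convert this into a uniform geodesic separation of vertices and hence, by compactness of the sphere, a uniform vertex bound. The only cosmetic difference is that you source the triangulation $P_e$ from the witness set built in Lemma~\ref{lem:down-essential} rather than directly from the witness set for $C$ restricted to $\Sigma_k$; these coincide for present purposes.
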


\begin{proof}
    For any $k\in\N$, define $\Sigma_{k}:=\{0,\ldots,k-1\}$.

    Assume for all $k\in\{2,\ldots,n-1\}$ that the set $\essential_{d,k}(\mathbf{S})$
    is finite. For every $k\in\{2,\ldots,n-1\}$ and every one of the
    finitely many $D\in\essential_{d,k}(\mathbf{S})$, we fix monotone
    maps $\sigma_{D},\rho_{D}:\Sigma_{k}\to(0,\infty)$ as in Definition~\ref{def:essential-set}
    so that, for all $c:T_{c}\in D$, we have
    \[
        \set{P\in\mathbf{S}}{\rho_{D}\underleq{c:T_{c}}P\underleqtop{c:T_{c}}\sigma_{D}}\neq\emptyset.
    \]
    For every $k\in\{2,\ldots,n-1\}$, define
    \[
        K_{k-2}:=\min\set{\frac{\sigma_{D}(k-2)}{\sigma_{D}(k-1)}}{D\in\essential_{d,k}(\mathbf{S})}\in(0,1].
    \]

    Let $C\in\codes(\Sigma_{n})$ be any element of the set $\essential_{d,n}(\mathbf{S})$.
    By Definition~\ref{def:essential-set}, there exist monotone maps
    $\sigma,\rho:\Sigma_{n}\to(0,\infty)$ so that, for every $c:T_{c}\in C$,
    we have
    \[
        \set{P\in\mathbf{S}}{\rho\underleq{c:T_{c}}P\underleqtop{c:T_{c}}\sigma}\neq\emptyset.
    \]

    For each $k\in\{2,\ldots,n-1\}$, by Lemma~\ref{lem:down-essential},
    the set
    \[
        C_{k}:=\set{c:T_{c}\downarrow_{k-1}}{c:T_{c}\in C,\ c\leq k-2}\subseteq\codes(\Sigma_{k})
    \]
    is an element of $\essential_{d,k}(\mathbf{S})$, and hence, for
    all $c:T_{c}\downarrow_{k-1}\in C_{k}$, we have
    \[
        \emptyset\neq\set{P\in\mathbf{S}}{\rho_{C_{k}}\underleq{T_{c}\downarrow_{k-1}}P\underleqtop{T_{c}\downarrow_{k-1}}\sigma_{C_{k}}}\subseteq\set{P\in\mathbf{S}}{P\underleqtop{T_{c}\downarrow_{k-1}}\sigma_{C_{k}}}.
    \]
    On the other hand, for each $k\in\{2,\ldots,n-1\}$, let $\rho_{k}$
    denote the restriction of $\rho$ to $\Sigma_{k}$. Then, for all
    $c:T_{c}\downarrow_{k-1}\in C_{k}$, by observing that for all $a,b\in\{0,\ldots,n-1\},$
    we have $c_{b}^{a}\downarrow_{k-1}\,|_{\rho_{k}}\leq c_{b}^{a}|_{\rho}$,
    and hence obtain
    \[
        \emptyset\neq\set{P\in\mathbf{S}}{\rho\underleq{c:T_{c}}P\underleqtop{c:T_{c}}\sigma}\subseteq\set{P\in\mathbf{S}}{\rho_{k}\underleq{c:T_{c}\downarrow_{k-1}}P}.
    \]
    As, for each $k\in\{2,\ldots,n-1\}$, both the sets
    \[
        \set{P\in\mathbf{S}}{P\underleqtop{T_{c}\downarrow_{k-1}}\sigma_{C_{k}}}\qquad\text{and}\qquad\set{P\in\mathbf{S}}{\rho_{k}\underleq{c:T_{c}\downarrow_{k-1}}P}.
    \]
    are non-empty, and because $\rho_{k}\underleq{c:T_{c}\downarrow_{k-1}}P$
    implies $\rho_{k}\underleqtop{T_{c}\downarrow_{k-1}}P$, we apply
    The Bootstrapping Lemma (Lemma~\ref{lem:bootstrapping-lemma2-1})
    to obtain, for each $k\in\{2,\ldots,n-1\}$, that
    \[
        \frac{\sigma_{C_{k}}(n-2)}{\sigma_{C_{k}}(n-1)}\leq\frac{\rho_{k}(n-2)}{\rho_{k}(n-1)}.
    \]
    Therefore, for each $k\in\{2,\ldots,n-1\}$,
    \[
        K_{k-2}=\min\set{\frac{\sigma_{D}(k-2)}{\sigma_{D}(k-1)}}{D\in\essential_{d,k}(\mathbf{S})}\leq\frac{\sigma_{C_{k}}(k-2)}{\sigma_{C_{k}}(k-1)}\leq\frac{\rho_{k}(k-2)}{\rho_{k}(k-1)}=\frac{\rho(k-2)}{\rho(k-1)}.
    \]

    Now, from
    \[
        0<K_{0}\leq\frac{\rho(0)}{\rho(1)},\quad0<K_{1}\leq\frac{\rho(1)}{\rho(2)},\quad\ldots,\quad0<K_{n-3}\leq\frac{\rho(n-3)}{\rho(n-2)},
    \]
    we obtain
    \[
        0<\parenth{\prod_{j=0}^{k-3}K_{j}}\leq\frac{\rho(0)}{\rho(n-2)}.
    \]
    Define
    \[
        \kappa(s):=\begin{cases}
            \parenth{\prod_{j=0}^{k-3}K_{j}} & s=0   \\
            1                                & s=n-2
        \end{cases}\quad(s\in\{0,n-2\}).
    \]
    Then
    \begin{align*}
        0 & <(n-2)_{0}^{0}|_{\kappa}\leq(n-2)_{0}^{0}|_{\rho}\ ,
    \end{align*}
    and since $\rho$ is monotone, for all $a,b,c\in\Sigma_{n}$, with
    $c\leq n-2$, we thus have that $(n-2)_{0}^{0}|_{\kappa}\leq(n-2)_{0}^{0}|_{\rho}\leq c_{b}^{a}|_{\rho}.$
    Hence, for all $c:T_{c}\in C$, we have that no triangulation from
    $\set{P\in\mathbf{S}}{\rho\underleq{c:T_{c}}P\underleqtop{c:T_{c}}\sigma}$
    can have distinct vertices closer than $(n-2)_{0}^{0}|_{\kappa}$
    to each other with respect to the geodesic metric on the unit sphere
    in $\Rd$.

    By compactness of the unit sphere $S\subseteq\Rd$, we set $N\in\N$
    to be the least cardinality that an open cover of $S$ with open geodesic
    balls of radius $(n-2)_{0}^{0}|_{\kappa}/2$ can have. We crucially
    note here that the number $N$ is independent of the choice of the
    element  $C$ from the set $\essential_{d,n}(\mathbf{S})$, and depends
    \emph{only} on the finitely many elements in $\bigcup_{k=2}^{n-1}\essential_{d,k}(\mathbf{S})$.
    Therefore, for every $c:T_{c}\in C$, each triangulation from $\set{P\in\mathbf{S}}{\rho\underleq{c:T_{c}}P\underleqtop{c:T_{c}}\sigma}$
    and hence also $T_{c}$ can have at most $N$ vertices.

    We conclude, for each of the elements $c:T_{c}$ in any $C\in\essential_{d,n}(\mathbf{S})$,
    that the number of vertices that $T_{c}$ can have is at most $N$.
    Hence the cardinality of the set $\essential_{d,n}(\mathbf{S})$ is
    at most the cardinality of the power set of the finite set
    \[
        \set{c:T\in\codes(\{0,\ldots,n-1\})}{\begin{array}{c}
                c\leq n-2, \\
                |\text{vertex set of }T|\leq N
            \end{array}}.\qedhere
    \]
\end{proof}
\begin{cor}
    \label{cor:ess-finite}Let $d,n\in\N$ with $d,n\geq2$. Let $\mathbf{S}$
    be a heteroperturbative set of labeled spherical triangulations of
    unit spheres. The set $\essential_{d,n}(\mathbf{S})$ is finite.
\end{cor}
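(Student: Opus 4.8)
The plan is to prove Corollary~\ref{cor:ess-finite} by a straightforward strong induction on $n$, using the two lemmas that were just established as the base case and the inductive step. The statement to be proven is that $\essential_{d,n}(\mathbf{S})$ is finite for all $d,n\in\N$ with $d,n\geq2$; here $d$ is fixed throughout (it plays no active role in the induction), so the induction is purely on $n\geq2$.

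First I would dispose of the base case $n=2$: this is exactly the content of Lemma~\ref{lem:2-ess-finite}, which asserts that $\essential_{d,2}(\mathbf{S})$ is finite. Next, for the inductive step, fix $n\geq3$ and assume as the (strong) inductive hypothesis that $\essential_{d,k}(\mathbf{S})$ is finite for every $k\in\{2,\ldots,n-1\}$. Then Lemma~\ref{lem:essential-sets-finite-induction-step} applies verbatim: its hypothesis is precisely that $\essential_{d,k}(\mathbf{S})$ is finite for all $k\in\{2,\ldots,n-1\}$, and its conclusion is that $\essential_{d,n}(\mathbf{S})$ is finite. This closes the induction, and the corollary follows for all $n\geq2$.

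There is essentially no obstacle here — the corollary is a formal packaging of Lemmas~\ref{lem:2-ess-finite} and~\ref{lem:essential-sets-finite-induction-step} into an induction. The only thing worth being careful about is that the inductive step genuinely requires \emph{strong} induction (finiteness of all lower levels $k\in\{2,\ldots,n-1\}$, not merely $k=n-1$), since Lemma~\ref{lem:essential-sets-finite-induction-step} invokes the down-operation $C\mapsto C_k$ for every such $k$ and uses the fixed maps $\sigma_D,\rho_D$ attached to the finitely many $D$ in each $\bigcup_{k=2}^{n-1}\essential_{d,k}(\mathbf{S})$. With that understood, the proof is a one-line appeal.

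\begin{proof}
Fix $d\in\N$ with $d\geq2$. We argue by strong induction on $n\geq2$ that $\essential_{d,n}(\mathbf{S})$ is finite. The base case $n=2$ is Lemma~\ref{lem:2-ess-finite}. For the inductive step, let $n\geq3$ and suppose that $\essential_{d,k}(\mathbf{S})$ is finite for every $k\in\{2,\ldots,n-1\}$. Then Lemma~\ref{lem:essential-sets-finite-induction-step} yields that $\essential_{d,n}(\mathbf{S})$ is finite. By induction, $\essential_{d,n}(\mathbf{S})$ is finite for all $n\geq2$.
\end{proof}
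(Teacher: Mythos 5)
Your proof is correct and matches the paper's argument exactly: the paper likewise deduces the corollary by strong induction on $n$, taking Lemma~\ref{lem:2-ess-finite} as the base case and Lemma~\ref{lem:essential-sets-finite-induction-step} as the inductive step. Your additional remark that the induction must be strong (since the inductive step uses all levels $k\in\{2,\ldots,n-1\}$) is accurate and consistent with how the paper's lemma is stated.
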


\begin{proof}
    This is immediate by strong induction using Lemma~\ref{lem:2-ess-finite}
    and Lemma~\ref{lem:essential-sets-finite-induction-step}.
\end{proof}
This allows us to prove our main result, Theorem~\ref{thm:main-theorem},
in the following corollary:
\begin{cor}
    \label{cor:main-result}Let $d,n\in\N$ with $d,n\geq2$. Let $\mathbf{S}$
    be a heteroperturbative set of labeled spherical triangulations of
    unit spheres. The set $\Pi_{d,n}(\mathbf{S})$ is finite.
\end{cor}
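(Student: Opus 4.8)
The plan is to obtain this immediately by combining the two results that do all the work. First I would invoke Lemma~\ref{lem:Pidn-cardinality-bounded-by-n-essential-sets}, which states that the cardinality of $\Pi_{d,n}(\mathbf{S})$ is at most that of $\essential_{d,n}(\mathbf{S})$. The content of that lemma is that every canonically labeled compact sphere packing associated to $\mathbf{S}$ with $n$ sizes of spheres determines, via Theorem~\ref{thm:packings-determine-fundamental-set}, a fundamental set $C\subseteq\pcodes(\packf p)$ which is $n$-essential for $\mathbf{S}$ in dimension $d$ — taking both $\rho$ and $\sigma$ in Definition~\ref{def:essential-set} equal to the canonical realizer and applying Lemma~\ref{lem:packing-determines-triangulations-from-Q} — while Theorem~\ref{thm:canonical-realizers-are-unique-for-packings} guarantees that this essential set in turn uniquely pins down the radii of the packing, so the assignment from packings to $n$-essential sets factors through the set of radii configurations.

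Second I would invoke Corollary~\ref{cor:ess-finite}, which asserts that $\essential_{d,n}(\mathbf{S})$ is finite. That corollary is itself proven by strong induction on $n$: the base case $n=2$ is Lemma~\ref{lem:2-ess-finite}, and the induction step is Lemma~\ref{lem:essential-sets-finite-induction-step}, whose proof is where the substance of the paper resides. There, The Bootstrapping Lemma (Lemma~\ref{lem:bootstrapping-lemma2-1}) together with the $\downarrow$-operation is used to produce a bound $N$, \emph{uniform over all} $C\in\essential_{d,n}(\mathbf{S})$ and depending only on the finitely many elements of $\bigcup_{k=2}^{n-1}\essential_{d,k}(\mathbf{S})$, on the number of vertices the neighbor complex of any code in $C$ may have; this bounds $|\essential_{d,n}(\mathbf{S})|$ by the cardinality of a power set of a finite set of codes.

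Putting the two together yields $|\Pi_{d,n}(\mathbf{S})|\leq|\essential_{d,n}(\mathbf{S})|<\infty$, which is the claim. At this stage there is no remaining obstacle: all the difficulty has already been front-loaded into establishing the uniform vertex bound in Lemma~\ref{lem:essential-sets-finite-induction-step} and the cardinality comparison in Lemma~\ref{lem:Pidn-cardinality-bounded-by-n-essential-sets}, and this corollary is merely their composition.
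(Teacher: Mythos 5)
Your proposal is correct and is exactly the paper's own proof: the corollary is obtained by combining Lemma~\ref{lem:Pidn-cardinality-bounded-by-n-essential-sets} with Corollary~\ref{cor:ess-finite}, giving $|\Pi_{d,n}(\mathbf{S})|\leq|\essential_{d,n}(\mathbf{S})|<\infty$. The additional commentary you give on where the underlying work resides is accurate but not needed for this step.
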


\begin{proof}
    This follows from combining Lemma~\ref{lem:Pidn-cardinality-bounded-by-n-essential-sets}
    and Corollary~\ref{cor:ess-finite}.
\end{proof}
As discussed previously, the set $\mathbf{W}$ (cf. Example~\ref{exa:Winter-polytopes-heteroperturbative})
is heteroperturbative, yielding the following result showing that
a fairly interesting subset of $\Pi_{d,n}$ is finite in general:
\begin{cor}
    \label{cor:main-result+winter}Let $d,n\in\N$ with $d,n\geq2$. With
    $\mathbf{W}$, as defined in Example~\ref{exa:Winter-polytopes-heteroperturbative},
    the set $\Pi_{d,n}(\mathbf{W})$ is finite.
\end{cor}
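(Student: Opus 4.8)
The statement is an immediate specialization of Corollary~\ref{cor:main-result}, so the plan has just two ingredients: first, record that the set $\mathbf{W}$ of Example~\ref{exa:Winter-polytopes-heteroperturbative} is a heteroperturbative set; second, apply Corollary~\ref{cor:main-result} with $\mathbf{S}:=\mathbf{W}$.

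For the first ingredient I would recall (and, if desired, repeat) the argument already sketched in Example~\ref{exa:Winter-polytopes-heteroperturbative}. Let $P\in\mathbf{W}$, so the convex hull of the vertex set of $P$ is a simplicial polytope inscribed in the unit sphere of some $\Rd$ whose face lattice, transported by central projection, is isomorphic (as a simplicial complex) to $P$. A chord of the unit sphere of Euclidean length $\ell$ subtends a geodesic arc of length $2\arcsin(\ell/2)$, and since $\arcsin$ is strictly increasing the Euclidean edge lengths of the polytope and the geodesic edge lengths of $P$ are related by a strictly monotone correspondence. By Lemma~\ref{lem:spherical-triangulations-contain-center-in-interior}(3), the origin lies in the interior of this polytope, and the same holds for the polytope attached to any triangulation in $\mathbf{W}$. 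Now suppose $P,Q\in\mathbf{W}$ are combinatorially equivalent with identical labels but not edge-isometric, and let $Y$ and $Z$ be the inscribed polytopes attached to $P$ and $Q$ respectively. Then either some chord of $Z$ is strictly shorter than the corresponding chord of $Y$, or some chord of $Z$ is strictly longer; in the first case Theorem~\ref{thm:Winter} applied to $(Y,Z)$ produces a strictly longer chord as well, and in the second case Theorem~\ref{thm:Winter} applied with the roles of $Y$ and $Z$ interchanged (legitimate since the origin is interior to both) produces a strictly shorter chord as well. Either way $Z$ has both a strictly shorter and a strictly longer chord relative to $Y$, hence, via the monotone correspondence, $P$ has both a strictly shorter and a strictly longer edge relative to $Q$. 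This is exactly the defining property of a heteroperturbative equivalence class, so $\mathbf{W}$ is heteroperturbative (the vertex labels being irrelevant throughout).

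Having verified that $\mathbf{W}$ is heteroperturbative, the second ingredient is a one-line appeal to Corollary~\ref{cor:main-result} with $\mathbf{S}:=\mathbf{W}$, yielding that $\Pi_{d,n}(\mathbf{W})$ is finite for all $d,n\in\N$ with $d,n\geq2$. I expect no genuine obstacle: every substantial step — the chain of finiteness results built on essential sets and the Bootstrapping Lemma (Lemma~\ref{lem:bootstrapping-lemma2-1}), the uniqueness of radii for packings associated to a heteroperturbative set (Theorem~\ref{thm:canonical-realizers-are-unique-for-packings}), and the reduction of heteroperturbativity of $\mathbf{W}$ to Winter's chord inequality (Theorem~\ref{thm:Winter}) — has already been carried out earlier in the paper, and this corollary merely instantiates the general theorem at the concrete set $\mathbf{W}$.
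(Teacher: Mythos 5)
Your proposal is correct and follows exactly the paper's route: the paper proves this corollary simply by noting that $\mathbf{W}$ is heteroperturbative (Example~\ref{exa:Winter-polytopes-heteroperturbative}, via Lemma~\ref{lem:spherical-triangulations-contain-center-in-interior} and Theorem~\ref{thm:Winter}) and then invoking Corollary~\ref{cor:main-result}. Your extra care in applying Winter's theorem in both directions (using that the origin is interior to both inscribed polytopes) is a welcome clarification of a point the paper's example leaves implicit, but it is not a different argument.
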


Furthermore, as discussed, $\mathbf{T}_{2}$ (cf. Example~\ref{exa:R2-heteroperturbative})
is also heteroperturbative (in fact $\mathbf{T}_{2}\subseteq\mathbf{W}$),
and since $\Pi_{2,n}=\Pi_{2,n}(\mathbf{T}_{2})$ we regain the previously
known the case for dimension two:
\begin{cor}
    Let $n\in\N$ with $n\geq2$. The set $\Pi_{2,n}$ is finite.
\end{cor}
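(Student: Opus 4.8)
The plan is to obtain this as an immediate specialization of Corollary~\ref{cor:main-result}, applied with $d=2$ and $\mathbf{S}=\mathbf{T}_{2}$, where $\mathbf{T}_{2}$ denotes the set of all labeled spherical triangulations of the unit circle in $\R^{2}$ from Example~\ref{exa:R2-heteroperturbative}. Two ingredients are needed. First, $\mathbf{T}_{2}$ must be a heteroperturbative set; this is precisely the content of Example~\ref{exa:R2-heteroperturbative}, the key point being that the geodesic edge lengths of any triangulation of the unit circle sum to $2\pi$, so that a combinatorics-preserving perturbation cannot have all edges grow, nor all edges shrink, forcing at least one edge to lengthen and at least one to shorten. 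Second, one must identify $\Pi_{2,n}$ with $\Pi_{2,n}(\mathbf{T}_{2})$.

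For the identification I would argue as follows. Let $\packf p$ be any compact disc packing of $\R^{2}$ and let $A\in\packf p$. The link of the center of $A$ in the packing complex of $\packf p$ is the link of a vertex in a homogeneous simplicial $2$-complex whose underlying space is homeomorphic to $\R^{2}$, hence it is a triangulated circle. Centrally projecting it onto the unit circle and carrying labels along, exactly as in Section~\ref{subsec:Canonical-labeling-and}, yields a labeled spherical triangulation of the unit circle in $\R^{2}$, which by definition of $\mathbf{T}_{2}$ is an element of $\mathbf{T}_{2}$. Thus every compact disc packing of $\R^{2}$ is associated to $\mathbf{T}_{2}$, whence $\Pi_{2,n}=\Pi_{2,n}(\mathbf{T}_{2})$. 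Combining this with Corollary~\ref{cor:main-result} applied to $d=2$ and $\mathbf{S}=\mathbf{T}_{2}$ then gives finiteness of $\Pi_{2,n}=\Pi_{2,n}(\mathbf{T}_{2})$.

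There is essentially no obstacle in this corollary: all the substantive work already resides in Corollary~\ref{cor:main-result} and in the verification (Example~\ref{exa:R2-heteroperturbative}) that $\mathbf{T}_{2}$ is heteroperturbative. The only mild care required is the routine observation that the canonical spherical triangulation attached to a disc in a planar compact packing genuinely is a triangulation of the unit circle, hence an element of $\mathbf{T}_{2}$, so that the hypothesis ``$\packf p$ is associated to $\mathbf{S}$'' in the definition of $\Pi_{d,n}(\mathbf{S})$ is automatic when $d=2$ and $\mathbf{S}=\mathbf{T}_{2}$.
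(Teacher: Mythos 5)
Your proposal is correct and follows exactly the paper's route: apply Corollary~\ref{cor:main-result} with $d=2$ and $\mathbf{S}=\mathbf{T}_{2}$, using Example~\ref{exa:R2-heteroperturbative} for heteroperturbativity and the observation that every compact disc packing is automatically associated to $\mathbf{T}_{2}$, so $\Pi_{2,n}=\Pi_{2,n}(\mathbf{T}_{2})$. The only difference is that you spell out the (routine) identification $\Pi_{2,n}=\Pi_{2,n}(\mathbf{T}_{2})$, which the paper simply asserts.
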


Given Corollary~\ref{cor:main-result+winter}, one may hope that
the canonical labeled spherical triangulations obtained from compact
packings of any dimension are always elements of $\mathbf{W}$, thereby
resolving Conjecture~\ref{conj:main-conjecture} in full generality.
This seen to be the case for compact disc packings in $\R^{2}$ and
is seen, ex post facto, to be the case for compact sphere packings
in $\R^{3}$ with two sizes of sphere, cf. \cite{FerniqueTwoSpheres2021}.
However this is not true in general. Some of the known compact packings
in $\R^{3}$ with three sizes of sphere have spheres with associated
canonical labeled triangulations of the unit sphere that are not contained
in $\mathbf{W}$. An example is presented in Figure~\ref{fig:fernique-example-not-in-W}
which exhibits the same pathology as described in Figure~\ref{fig:split-meridian}.

\begin{figure}[h]
    \centering \includegraphics[width=0.4\textwidth]{fernique3d\lyxdot 3\lyxdot 1}\includegraphics[width=0.4\textwidth]{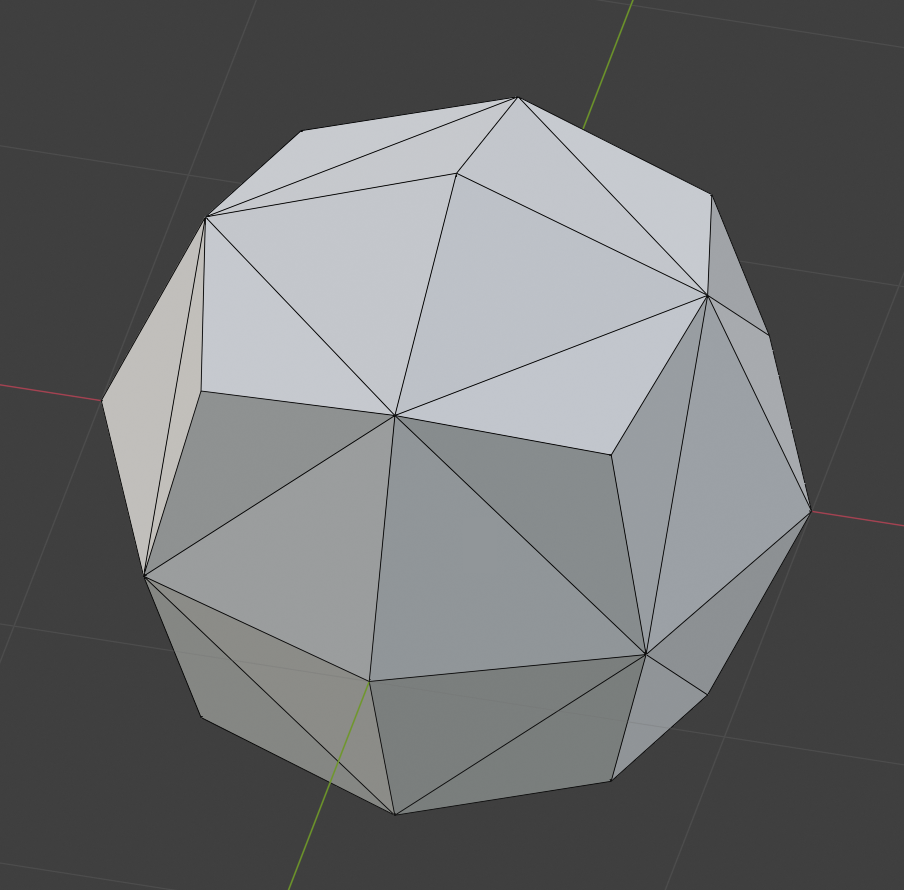}

    \caption{\label{fig:fernique-example-not-in-W}Consider the compact packing
        in $\protect\R^{3}$ from \cite[Theorem~1]{FerniqueThreeSizes} with
        spheres with radii $\sqrt{3/2}-1$ (red), $\sqrt{2}-1$ (green), and
        $1$ (yellow). The canonical labeled spherical triangulation associated
        to the largest spheres in such a packing are not elements of $\mathbf{W}$
        (cf. Example~\ref{exa:Winter-polytopes-heteroperturbative}). On
        a large sphere, the points of contact of the medium and small neighboring
        spheres can ``see'' each other over the chord connecting points of
        contact of other neighboring large spheres. This can be observed on
        the inscribed nonconvex polyhedron in the figure on the right, whose
        central projection onto the circumscribing sphere, coincides with
        the canonical spherical triangulation associated to a large sphere
        from this packing.}
\end{figure}

\begin{acknowledgement*}
    The authors express their gratitude to Martin Winter for numerous
    fruitful discussions and for specifically bringing Example~\ref{exa:R3-not-heteroperturbative}
    to their attention. The authors are furthermore indebted to the anonymous
    referee for their careful reading of the manuscript and for their
    helpful suggestions.
\end{acknowledgement*}
\bibliographystyle{amsalpha}
\bibliography{bibliography}

\end{document}